\crefname{hypothesis}{Hypothesis}{Hypotheses}
\title{Moving higher-order Taylor  approximations method for smooth constrained minimization problems\thanks{Submitted to the editors \date.
\funding{ITN-ETN project TraDE-OPT funded by the European Union’s Horizon 2020 Research and Innovation Programme under the Marie Skolodowska-Curie grant agreement No. 861137;  
UEFISCDI, Romania, PN-III-P4-PCE-2021-0720, under project L2O-MOC, nr. 70/2022.}}}
\author{Yassine Nabou\thanks{Automatic Control and Systems Engineering Department, University Politehnica Bucharest, Romania, 
  (\email{yassine.nabou@stud.acs.upb.ro})}
\and Ion Necoara\thanks{ Automatic Control and Systems Engineering Department, University Politehnica Bucharest and Gheorghe Mihoc - Caius Iacob Institute of Mathematical Statistics and Applied Mathematics of the Romanian Academy, Bucharest, Romania,
  (\email{ion.necoara@upb.ro})}}
\newcommand{\norm}[1]{\lVert#1\rVert}
\newtheorem{assumption}{Assumption}
\newcommand{\red}{\textcolor{black}}
\begin{document}

\maketitle

\begin{abstract}
In this paper, we develop a higher-order method for solving composite (non)convex minimization problems with smooth (non)convex functional constraints. At each iteration, our method approximates the smooth part of the objective function and of the constraints by higher-order Taylor approximations, leading to a moving Taylor approximation method (MTA). We present convergence guarantees for the MTA algorithm for both, nonconvex and convex problems. In particular, when the objective and the constraints are nonconvex functions, we prove that the sequence generated by MTA algorithm converges globally to a KKT point. Moreover, we derive  convergence rates in the iterates when the problem’s data satisfy the Kurdyka-Lojasiewicz (KL) property. Further, when the objective function is (uniformly) convex and the constraints are also convex, we provide (linear/superlinear) sublinear convergence rates for our algorithm. Finally, we present an efficient implementation of the proposed algorithm and compare it with existing methods from the literature.
\end{abstract}

\begin{keywords}
Nonconvex  minimization, higher-order methods, feasible methods, Kurdyka-Lojasiewicz property, convergence rates, \LaTeX.
\end{keywords}

\begin{MSCcodes}
68Q25, 68R10, 68U05
\end{MSCcodes}

\section{Introduction}\label{sec:1}
In this paper, we consider the following composite optimization problem with functional constraints:
\begin{align}
\label{eq:problem}
 \min_{x \in \mathbb{E}} F(x)&: = F_0(x) + h(x)\\
 \text{s.t.}&: \; F_i(x) \leq 0 \quad \forall i=1:m, \nonumber 
\end{align}
\noindent where $F_i : \mathbb{E} \to \mathbb{R}$, for $i=0:m$, are continuous differentiable functions and $h: \mathbb{E} \to \bar{\mathbb{R}} $ is proper and convex function (e.g., the indicator function of some convex set). Here, $\mathbb{E}$ is a finite-dimensional real vector space. Nonlinear programming problems have a long and rich history (see, for example, the monograph  \cite{Ber:99}), because they model many practical applications. Some of the most common are power systems, engineering design, control, signal and image processing, machine learning and statistics, see e.g. \cite{Ban:74,ChTaPa:07,PalChi:07,RigTon:11,SaDa:95,SheZha:04}. Several algorithms with complexity guarantees have been developed for solving \eqref{eq:problem}. For example, the successive linearization methods \cite{BoTo:95,MesBau:21,Zou:60} are based on solving a sequence of quadratic regularized subproblems over a set of linearized approximations of
the constraints (known as SCP/SQP type algorithms). Furthermore, the augmented Lagrangian (AL) methods reformulate the nonlinear problem \eqref{eq:problem} via unconstrained minimization problems (known as methods of multipliers), see e.g., \cite{Ber:82,Hes:69}. For the particular class of constrained optimization problems with smooth data,  \cite{AuShTe:10} introduces a moving ball approximation method (MBA) that approximates the objective function with a quadratic and the feasible set by a sequence of appropriately defined balls. The authors provide asymptotic convergence guarantees for the sequence generated by MBA when the data is (non)convex, and linear convergence if the objective function is strongly convex. Later, several papers considered variants of the MBA algorithm; see \cite{BolChe:20,BoPa:16,YuPoLu:21}. For example, in \cite{YuPoLu:21} the authors present a line search MBA algorithm for difference-of-convex minimization problems and derive asymptotic convergence in the nonconvex settings and local convergence in the iterates when a special potential function related to the objective satisfies the Kurdyka-Lojasiewicz (KL) property. In \cite{BolChe:20}, the authors consider convex composite minimization problems that cover, in particular, problems of the form \eqref{eq:problem}, and use a similar MBA type scheme for solving such problems, deriving a sublinear convergence rate for it. Note that all these previous methods are \textit{first-order methods}, and despite their empirical success in solving difficult optimization problems, their convergence speed is known to be slow.

\medskip 

\noindent A natural way to ensure faster convergence rates is to increase the power of the oracle, i.e., to use higher-order information (derivatives) to build a higher-order (Taylor) model {(see  \cite{CaGoTo:22} for a detailed exposition)}. For example, \cite{NesPol:06} derives the first global convergence rate of cubic regularization of Newton method for unconstrained smooth minimization problems with the hessian Lipschitz continuous (i.e., using a second-order oracle). Paper  \cite{NesPol:06} derives global convergence guarantees (in function value) in the convex case of order $\mathcal{O}(k^{-2})$. Higher-order methods are recently popular due to their performance in dealing with ill conditioning and fast rates of convergence. However, the main obstacle in the implementation of these (higher-order) methods lies in the complexity of the corresponding model approximation formed by a high-order multidimensional polynomial, which may be difficult to handle and minimize (see, for example, \cite{BiGaMa:17,CaGoTo:19}). Nevertheless, for convex unconstrained smooth problems, \cite{Nes:20} proved that a regularized Taylor approximation becomes convex when the corresponding regularization parameter is sufficiently large. This observation opened the door for using higher-order Taylor approximations to different structured problems (see, for example, \cite{DoiNes:20,DoiNes:21,GasDvu:18,GraNes:20,NabNec:23}). {Several papers have already proposed higher-order methods for solving composite optimization problems of the form \eqref{eq:problem} with complexity guarantees, see e.g., \cite{BiGaMa:16,Mar:17,DoiNes:21,CaGoTo:22}}. For example, in a recent paper \cite{DoiNes:21}, the authors consider fully composite problems (which cover, as a particular case, \eqref{eq:problem}), and when the data are $p$ times continuously differentiable with the $p$th derivative Lipschitz and uniformly convex, they derive  linear convergence  in function values. In \cite{BiGaMa:16}, the authors derive worst-case complexity bounds for computing approximate first-order critical points using higher-order derivatives for problem \eqref{eq:problem} with nonlinear equality constraints. {Paper \cite{Mar:17} also considers problem \eqref{eq:problem} with nonlinear equality constraints and  employs an approach wherein the objective is approximated with a model of arbitrarily high-order. Each iteration requires  the computation of an approximate KKT point for the subproblem, devoiding of any constraint qualification condition. The authors show that their  scheme converges to an approximate KKT point within $\mathcal{O}\left(\epsilon^{-\frac{p+\beta}{p+1-\beta}}\right)$ iterations, where $\beta \in [0,1]$.}

\vspace{0.2cm}

\noindent  In this paper, we develop a higher-order method for solving composite (non)convex minimization problems with smooth (non)convex functional constraints \eqref{eq:problem}. At each iteration, our method approximates the smooth part of the objective function and of the constraints by a higher-order Taylor approximation, leading to a moving Taylor approximation method. We present convergence guarantees for our algorithm for both nonconvex and convex problems.

\medskip

\begin{table}
\centering
\begin{tabular}{|l|l|l|}
\hline
                                                           & Convergence rates                      & Theorem \\ \hline
\multirow{2}{*}{Nonconvex}                                 & \begin{tabular}[c]{@{}l@{}}Measure of optimality (see section \ref{sec:kkt}):\\ $\min\limits_{i=1:k}{\cal M}(x_{i})\leq \mathcal{O}\left(k^{-\min\left(\frac{p}{p+1}, \frac{q}{q+1}\right)}\right)$\end{tabular} &  \ref{th:2}    \\ \cline{2-3} 
                                                           & $x_k\to x^*$  sublinearly or linearly under KL                       &  \ref{th:kl}    \\ \hline
Convex                                                     & $F(x_k) -  F^* \leq  \mathcal{O}\left(k^{-\min(p,q)}\right)$                                                                                                               & \ref{th:cvx}     \\ \hline
\begin{tabular}[c]{@{}l@{}}Uniformly convex\end{tabular} & $F(x_k)\to F^*$   linearly or superlinearly                                                                                                                                   & \ref{th:unif}    \\ \hline
\end{tabular}
\caption{Summary of the convergence results obtained in this paper for  MTA.}\label{tab:0}
\end{table}


\noindent \textit{Contributions.}  Our main contributions are as follows:\\
\noindent (i) We introduce a new moving Taylor approximation (MTA) method for solving problem \eqref{eq:problem} \red{that requires starting from an initial feasible point}. Our framework is flexible in the sense that we can approximate the objective function and the constraints with higher-order Taylor approximations of different degrees (i.e., we can approximate the smooth part of the objective function with a Taylor approximation of degree $p$ and the constraints with a Taylor approximation of degree~$q$). 

\vspace{0.2cm}

\noindent (ii) We derive convergence guarantees for MTA algorithm for (non)convex problems with smooth (non)convex functional constraints. More precisely, when the data is nonconvex, we show that the iterates generated by MTA converge to a KKT point and the convergence rate is of order $\mathcal{O}\left(k^{-\min\left(\frac{p}{p+1}, \frac{q}{q+1}\right)}\right)$, where $k$ is the iteration counter and $p$ and $q$ are the degrees of the Taylor approximations for objective and constraints, respectively. When the data of the problem are semialgebraic, we derive (linear) sublinear convergence rates in the iterates (depending on the parameter of the KL property). Moreover, for convex problems, we  derive a global sublinear convergence rate of order $\mathcal{O}\left(k^{-\min(p,q)}\right)$ in function values, and additionally, if the objective function is uniformly convex, we derive a (superlinear) linear convergence rate (depending on the degree of the uniform convexity). The convergence rates obtained in this paper are summarized in Table \ref{tab:0}. 

\vspace{0.2cm}

\noindent (iii) Note that the subproblem we need to solve at each iteration of MTA  is usually nonconvex and it can have local minima. However, we show  for $p, q \leq 2$ that our approach is implementable, since this subproblem is equivalent to minimizing an explicitly written convex function over a convex set that can be solve using efficient convex optimization tools. We believe that this is an additional step towards practical implementation of higher-order (tensor) methods in smooth nonconvex optimization problems with smooth nonconvex functional constraints. 

\medskip 

\noindent  Besides providing a unifying   analysis of  higher-order  methods, in special cases, where complexity bounds are known for some particular   algorithms, our convergence results recover the  existing bounds. For example, for $p=q=1$, we recover the convergence results obtained in \cite{AuShTe:10,YuPoLu:21} in the nonconvex setting. We also recover the sublinear convergence rate in the convex case derived in \cite{BolChe:20} for $p=q=1$, as well as the linear convergence rate in function values obtained in \cite{DoiNes:21}, but we only assume uniform convexity on the objective and not on the constraints.

\section{Notations and preliminaries}\label{sec:2}

We denote a finite-dimensional real vector space with $\mathbb{E}$ and $\mathbb{E}^{*}$ its dual space composed of linear functions on $\mathbb{E}$. For  $s\in\mathbb{E}^{*}$, the value of $s$ at point $x\in\mathbb{E}$ is denoted by $\langle s,x\rangle$.  Using a self-adjoint positive-definite operator $B:\mathbb{E}\rightarrow \mathbb{E}^{*}$, we endow these spaces with conjugate Euclidean norms:
\begin{align*}
\norm{x}=\langle Bx,x\rangle^{\frac{1}{2}},\quad x\in \mathbb{E},\qquad \norm{g}_{*}=\langle g,B^{-1}g\rangle^{\frac{1}{2}},\quad g\in \mathbb{E}^{*}.
\end{align*}   
For a twice differentiable function $\phi$ on a convex and open domain $\text{dom}\;\phi \subseteq \mathbb{E}$, we denote by $\nabla \phi(x)$ and $\nabla^{2} \phi(x)$ its gradient and hessian evaluated at  $x\in \text{dom}\;\phi$, respectively. Then, $\nabla \phi(x)\in\mathbb{E}^{*}$ and $\nabla^{2}\phi(x)h\in\mathbb{E}^{*}$ for all $x\in\text{dom}\;\phi$, $h\in\mathbb{E}$.  Throughout the paper, we consider $p,q$ positive integers.  In what follows, we often work with directional derivatives of function $\phi$ at $x$ along directions $h_{i}\in \mathbb{E}$ of order $p$, $D^{p} \phi (x)[h_{1},\cdots,h_{p}]$, with $i=1:p$.  If all the directions $h_{1},\cdots,h_{p}$ are the same, we use the notation
$ D^{p} \phi(x)[h]$, for $h\in\mathbb{E}.$
Note that if $\phi$ is $p$ times differentiable, then $D^{p} \phi (x)$ is a symmetric $p$-linear form. Then, its norm is defined as \cite{Nes:20}:

\vspace{-0.2cm}

\begin{align*}
\norm{D^{p} \phi (x)}= \max\limits_{h\in\mathbb{E}} \left\lbrace D^{p} \phi (x)[h]^{p} :\norm{h}\leq 1 \right\rbrace.  
\end{align*}

\vspace{-0.1cm}

\noindent Further, the Taylor approximation of order $p$ of $\phi$ at $x\in \text{dom}\;\phi$ is denoted with:

\vspace{-0.4cm}

\begin{align*}
    T_p^{\phi}(y; x)= \phi(x) + \sum_{i=1}^{p} \frac{1}{i !} D^{i} \phi(x)[y-x]^{i} \quad \forall y \in \mathbb{E}.
\end{align*}

\vspace{-0.1cm}

\noindent Let $\phi: \mathbb{E}\mapsto \mathbb{R}$ be a $p$ differentiable function on $\mathbb{E}$. Then, the $p$ derivative is Lipschitz continuous if there exist a constant $L_p^{\phi} > 0$ such that:
   	\begin{equation} \label{eq:001}
    	\| D^p \phi(x) - D^p \phi(y) \| \leq L_p^{\phi} \| x-y \| \quad  \forall x,y \in \mathbb{E}.
    	\end{equation}  

\vspace{-0.1cm}
     
\noindent It is known that if \eqref{eq:001} holds,  then the residual between the function  and its Taylor approximation can be bounded \cite{Nes:20}:

\vspace{-0.5cm}

    \begin{equation}\label{eq:TayAppBound}
    \vert \phi(y) - T_p^{\phi}(y;x) \vert \leq  \frac{L_p^{\phi}}{(p+1)!} \norm{y-x}^{p+1} \quad  \forall x,y \in \mathbb{E}.
    \end{equation}

\vspace{-0.2cm}
  
\noindent  If $p \geq 2$, we also have the following inequalities valid for all $ x,y \in \mathbb{E}$:

\vspace{-0.5cm}

    \begin{align} \label{eq:TayAppG1}
    &\| \nabla \phi(y) - \nabla T_p^{\phi}(y;x) \|_* \leq \frac{L_p^{\phi}}{p!} \|y-x \|^p, \\
    \label{eq:TayAppG2}
    &\|\nabla^2 \phi(y) - \nabla^2 T_p^{\phi}(y;x) \| \leq \frac{L_p^{\phi}}{(p-1)!} \| y-x\|^{p-1}.
    \end{align}

\vspace{-0.2cm}
    
\noindent For the Hessian, the norm defined in \eqref{eq:TayAppG2} corresponds to the spectral norm of self-adjoint linear operator (maximal module of all eigenvalues computed w.r.t. $B$). We say that $\phi$ is uniformly convex of degree $\theta$ with constant $\sigma$ if:

\vspace{-0.4cm}

\begin{align*}
    \phi(y) \geq \phi(x) +\langle \nabla \phi(x), y-x \rangle + \frac{\sigma}{\theta}\|y-x\|^\theta \quad \forall x,y\in\mathbb{E}.
\end{align*}

\vspace{-0.2cm}

\noindent For a convex function $h:\mathbb{E} \mapsto \mathbb{R}$, we denote by $\partial h(x)$ its subdifferential at $x$ that is defined as $\partial h(x) : =\{\lambda\in \mathbb{E}^*  : h(y) \geq h(x) + \langle \lambda , y-x\rangle\;\forall y\in \mathbb{E}\}$.  Denote $S_f(x) := \text{dist}(0,\partial f(x))$. Let us also recall the definition of a function satisfying the \textit{Kurdyka-Lojasiewicz (KL)} property (see \cite{BolDan:07} for more details). 

\begin{definition}
\label{def:kl}
\noindent A proper lower semicontinuous  function $f: \mathbb{E}\rightarrow (-\infty , +\infty]$ satisfies  \textit{Kurdyka-Lojasiewicz (KL)} property on the compact set $\Omega \subseteq \text{dom} \; f$ on which $f$ takes a constant value $f_*$ if there exist $\delta, \epsilon >0$ such that   one has:
\begin{equation}\label{eq:kl}
\kappa' (f(x) - f_*) \cdot  S_{f}(x)  \geq 1  \quad   \forall x\!:  \emph{dist}(x, \Omega) \leq \delta, \;  f_* < f(x) < f_* + \epsilon,  
\end{equation}
where $\kappa: [0,\epsilon] \mapsto \mathbb{R}$ is  concave differentiable function satisfying $\kappa(0) = 0$ and $\kappa'>0$.
\end{definition}    
This definition is satisfied by a large class of functions, for example, functions that are semialgebric (e.g., real polynomial functions), vector or matrix (semi)norms (e.g., $\|\cdot\|_p$ with $p \geq 0$ rational number), see \cite{BolDan:07} for a comprehensive list.


\section{A moving Taylor approximation method}\label{sec:3}

In this section, we introduce a new higher-order algorithm, which we call  \textit{Moving Taylor Approximation} (MTA) algorithm, for solving the constrained optimization problem \eqref{eq:problem}. We denote the  feasible set of \eqref{eq:problem} by ${\cal F} = \{x\in\mathbb{E}:   F_i(x) \leq 0 \;\; \forall i=1:m\}$. In this paper, we consider the following assumptions for the objective and the constraints:
\begin{assumption}
\label{ass:1}
We have the following assumptions for problem \eqref{eq:problem}:
\begin{enumerate}
    \item The (possibly nonconvex) function $F_0$ is $p$-times continuously differentiable (with $p \geq 1$) and its $p$th derivative satisfy the Lipschitz condition:
\begin{align*}
 \|  D^p F_0(x)  - D^p F_0(y) \| \leq L_p  \|x-y\| \quad \forall x,y\in\mathbb{E}. 
\end{align*}
\item The (possibly nonconvex) constraints $F_i$ are  $q$-times continuously differentiable (with $q \geq 1$) and their $q$th derivatives satisfy the Lipschitz condition:
\[  \|  D F_i^q(x)  - D F_i^q(y) \| \leq L_q^i \|x-y\| \quad \forall x,y\in\mathbb{E},  \; i=1:m.  \] 
\item $h$ is simple, proper, convex and locally Lipschitz continuous function. 
\end{enumerate}
\end{assumption}
Next, we assume that our problem is feasible and has bounded level sets:
\begin{assumption}
\label{ass:2}
Problem \eqref{eq:problem} is feasible, i.e., $\mathcal{F} \neq \emptyset$ and the set  $\mathcal{A}(x_0) := \{x \in \mathbb{E}: x \in {\cal F} \; \text{and} \; F(x) \leq F(x_0) \}$ is bounded for any fixed $x_0 \in {\cal F}$.  
\end{assumption}  
Finally, we assume that the Mangasarian-Fromovitch constraint qualification (MFCQ) holds for the problem \eqref{eq:problem}: 
\begin{assumption}
\label{ass:3}
 The MFCQ holds for the optimization problem \eqref{eq:problem}:
\[  \forall x \in {\cal F} \;\; \exists d \in \mathbb{E} \;\; \text{s.t.} \;\; \langle \nabla F_i(x), d \rangle <0 \;\; \forall i  \in I(x), \]
where $I(x):=\{i \in [m], \; F_i(x) =0 \}.$
\end{assumption} 

\noindent Note that Assumptions \ref{ass:2} and \ref{ass:3} are standard in the context of nonlinear programming.  In particular, the  MFCQ guarantees the existence of bounded Lagrange multipliers satisfying the KKT optimality conditions. Note that in general, for an optimization algorithm,  if one wants to  prove only local convergence rates around a local minimum $x^*$, then it is sufficient to require MFCQ  only at that point  $x^*$. However, if one wants to prove global convergence for an algorithm, it is needed to require MFCQ  on a set where all the iterates lie (see  Assumption \ref{ass:3} and e.g., also papers \cite{Nes:07,Lu:22}).  From Assumption \ref{ass:1}, we have for all $x,y \in \mathbb{E}$ \cite{Nes:20}:
\begin{align}
\label{eq:fineq}
& |F_0(y) - T_p^{F_0}(y;x)| \leq  \frac{L_p}{(p+1)!} \|y - x \|^{p+1},  
\end{align}
\begin{align}
\label{eq:fineq_c}
& |F_i(y) - T_q^{F_i}(y;x)| \leq  \frac{L_q^i}{(q+1)!} \|y - x \|^{q+1}, \; i=1:m,  
\end{align}

\noindent At each iteration our algorithm constructs  Taylor approximations for the objective function and the functional constraints using the inequalities given in \eqref{eq:fineq} and \eqref{eq:fineq_c}. To this end, for simplicity, we consider the following notations:
\begin{align*}
&s_{M_p}^M(y;x) \stackrel{\text{def}}{=} T_p^{F_0}(y;x) + \frac{M_p}{(p+1)!} \|y - x \|^{p+1} + \frac{M}{(q+1)!} \|  y - x \| ^{q+1},\\
&s_{M_{q}^i}(y;x) \stackrel{\text{def}}{=} T_q^{F_i}(y;x) + \frac{M_q^i}{(q+1)!} \|y - x \|^{q+1},
\end{align*}
\noindent where $M_p$, $M$ and $M_q^i$, for $i=1:m$, are positive constants. The MTA algorithm is defined in the table.


\begin{algorithm}
\caption{\hspace{1cm} \textbf{MTA}: Moving Taylor approximation}
\begin{algorithmic}\label{alg:1}
\STATE{Given $x_0 \in {\cal F}$ and \red{$M_p > L_p, M>0, M_q^i > L_q^i$}, for $i=1:m$,  and $k=0$.}
\WHILE{stopping criteria}
\STATE{compute $x_{k+1}$ an approximate stationary point of the subproblem:}
\begin{align}\label{eq:rn}
&\min_{x\in\mathbb{E}}  s_{M_p}^M(x;x_k) + h(x)\\
&\;\; \text{s.t.}: \; s_{M_q^i}(x;x_k)  \leq 0, \;\;  i=1:m, \nonumber 
\end{align}
satisfying the following descent:
\begin{align}
\label{eq:desc}
s_{M_p}^M(x_{k+1};x_k) + h(x_{k+1}) \leq s_{M_p}^M(x_k;x_k) + h(x_k) \; (:= F(x_k)).
\end{align}
\STATE{update $k=k+1$.}
\ENDWHILE
\end{algorithmic}
\end{algorithm}


\medskip 

\noindent \red{Note that our algorithm requires starting from an initial feasible point (i.e., $x_0 \in {\cal F}$), hence, problem \eqref{eq:problem} should permit computation of such a point.} Moreover, one can notice that  if $F_i$'s, for $i=0:m$, are convex functions, then the subproblem \eqref{eq:rn} is also convex. Indeed, if $M_p \geq pL_p$ and $M_q^i \geq qL_q^i$ for $i=1:m$, then the Taylor approximations $s_{M_p}^M(\cdot;x_k)$ and $s_{M_q^i}(\cdot;x_k)$ for $i=1:m$ are (uniformly) convex functions (see Theorem 2 in \cite{Nes:20}). Hence, in the convex case, by approximate stationary point in algorithm MTA we mean that $x_{k+1}$ is the global optimum of the convex subproblem \eqref{eq:rn}. However, in the nonconvex case, we cannot always guarantee the convexity of the subproblem \eqref{eq:rn}. In this case, by approximate stationary point in algorithm MTA we mean that $x_{k+1}$ satisfies Assumption \ref{th:kkt} below together with   the descent \eqref{eq:desc}. Moreover, in Section \ref{sec:4} we  show that one can still use the powerful tools from convex optimization for solving subproblem \eqref{eq:rn} even in the nonconvex case.  Note that our novelty comes from using  two regularization terms in the objective function of the subproblem \eqref{eq:rn}, i.e.: \\
(i) $\frac{M_p}{(p+1)!}\|x - x_k\|^{p+1}$ ensures the convexity of the subproblem in the convex case (provided that $M_p\geq pL_p$),\\
(ii) while $\frac{M}{(q+1)!}\|x - x_k\|^{q+1}$ ensures a descent for an appropriate Lyapunov function (see Lemma \ref{lem:4}) and a better convergence rate (see Remark  \ref{rem:1}). 

\medskip 

\noindent We denote the feasible set of subproblem \eqref{eq:rn} by: $${\cal F}(x_k): = \{ y\in\mathbb{E}:\;s_{M_q^i}(y;x_k) \leq 0 \;\; \forall i=1:m \}.$$

\section{Nonconvex convergence analysis}
\label{sec:nonconvex}
In this section we assume that the constrained optimization problem \eqref{eq:problem} is nonconvex, i.e., $F_i$'s, for $i=0:m$, are (possibly) nonconvex functions. Then, the subproblem \eqref{eq:rn} is usually also nonconvex. Consequently, in this section we assume that $x_{k+1}$ satisfies  the descent \eqref{eq:desc}, and, additionally, is an \textit{approximate stationary point} of the subproblem \eqref{eq:rn} as defined in the following assumption: 
\red{
\begin{assumption}
    \label{th:kkt}
    Given $\eta_1,\eta_2,\eta_3 > 0$, there exist Lagrange multipliers $u^{k+1} = (u_1^{k+1},\cdots,u_m^{k+1}) \geq 0$ and $\Lambda_{k+1}\in\partial h(x_{k+1})$ such that for all $i=1:m$, the following approximate KKT (A-KKT) conditions hold for $x_{k+1}$ corresponding  to the subproblem \eqref{eq:rn} in  MTA algorithm:
    \begin{align}\label{eq:CAKKT}
       &\left\|\nabla s_{M_p}^M(x_{k+1};x_k) + \Lambda_{k+1} + \sum_{i=1}^{m}u_i^{k+1}\nabla s_{M_q^i}(x_{k+1};x_k)\right\|\leq \eta_1\|x_{k+1} - x_k\|^{\min(p,q)},\nonumber \\
       &|u_i^{k+1}s_{M_q^i}(x_{k+1};x_k) |\leq \frac{\eta_2}{(q+1)!}\|x_{k+1} \!- x_k\|^{q+1} \quad \forall i=1:m, \\ 
       &\left(\!s_{M_q^i}(x_{k+1};x_k) \!\right)_+ \!\! \leq \frac{\eta_3}{(q\!+\!1)!} \|x_{k+1} \!- x_k\|^{q+1} \quad  \forall i=1:m, \;  \text{ where } (a)_{+} =\max(0,a). \nonumber
    \end{align}    
\end{assumption}}
\noindent  Similar approximate KKT conditions have been studied in \cite{AnMaSaSa:14, AnMaSv:10,  Mar:17}. \red{However, our conditions are more general as we allow the right hand side terms  to be dynamic}. Note that in \cite{AnMaSaSa:14} (page 3) it has been shown that at any local minimizer of subproblem \eqref{eq:rn} there exist $x_{k+1}$ and $(u_i^{k+1})_{i=1}^{m} \geq 0$ such that A-KKT conditions \eqref{eq:CAKKT} hold.  In general, if the original problem \eqref{eq:problem} satisfies some constraint qualifications (e.g., MFCQ) on $\cal F$, then the corresponding subproblem \eqref{eq:rn} may satisfy some constraint qualifications as well, which however are not necessarily of the same type as that of the original problem. For example, if the original problem satisfies MFCQ and the subproblem \eqref{eq:rn} is convex, then the Slater condition holds for the subproblem and, consequently, Assumption \ref{th:kkt} holds with $\eta_1=\eta_2 = \eta_3 = 0$ at $x_{k+1}$ (the global minimum of the convex subproblem). Indeed, let us prove that the Slater condition holds under MFCQ (Assumption \ref{ass:3}), i.e., for any $x \in {\cal F}$ fixed, there exists a $\zeta \in \mathbb{E}$ such that the inequality constraints in \eqref{eq:rn} hold strictly, i.e.:
\begin{align*}
F_i(x) + \langle \nabla F_i(x), \zeta - x \rangle + \sum_{j=2}^{q}\frac{1}{j!}\nabla^j F_i (x)[\zeta - x]^{j}  +  \frac{M_q^i}{(q+1)!} \| \zeta  - x \|^{q+1} < 0,
\end{align*}
for all $i=1:m$. Using a similar argument as in \cite{AuShTe:10}, we show that  a point of the form $\zeta = x + t d$, with $t \in \mathbb{R}_+$ and $d \in \mathbb{E}$ such that $\|d\| =1$,  satisfies strictly these inequalities provided that $t$ is sufficiently small.  Indeed, for an inactive constraint   $F_i(x) <0$, we have:
\begin{align*}
& F_i(x) + \langle \nabla F_i(x), \zeta - x \rangle + \sum_{j=2}^{q}\frac{1}{j!}\nabla^j F_i (x)[\zeta - x]^{j}  +  \frac{M_q^i}{(q+1)!} \| \zeta  - x \|^{q+1}\\ 
&= F_i(x) + t\langle \nabla F_i(x), d \rangle + \sum_{j=2}^{q} t^j\frac{1}{j!}\nabla^j F_i (x)[d]^{j}  +  t^{q+1}\frac{M_q^i}{(q+1)!} \| d \|^{q+1}\\ 
&\leq F_i(x) + t \|\nabla F_i(x)\| + \sum_{j=2}^{q} t^j\frac{1}{j!} \|\nabla^j F_i (x)\| + t^{q+1}\frac{M_q^i}{(q+1)!} <0,
\end{align*}
\noindent where the first inequality follows from Cauchy-Schwartz  and the last inequality  from $F_i(x) <0$ and $t$  is sufficiently small. For an active constraint $F_i(x) = 0$, from Assumption \ref{ass:3} we have $\langle \nabla F_i(x), d \rangle <0$ for some $d$. Hence, using a similar argument as above, we have: \begin{align*}
&F_i(x) + \langle \nabla F_i(x), \zeta - x \rangle + \sum_{j=2}^{q}\frac{1}{j!}\nabla^j F_i (x)[\zeta - x]^{j}  +  \frac{M_q^i}{(q+1)!} \| \zeta  - x \|^{q+1}\\ 
& =  t\langle \nabla F_i(x), d \rangle + \sum_{j=2}^{q} t^j\frac{1}{j!} \|\nabla^j F_i (x)\| + t^{q+1}\frac{M_q^i}{(q+1)!} <0,
\end{align*}
provided that $t$ is sufficiently small. This shows that the Taylor approximation inequality constraints in   \eqref{eq:rn} have a nonempty interior, and thus the Slater condition holds for the subproblem \eqref{eq:rn}.

\medskip 

\noindent Next, we show that the sequence $(F(x_k))_{k\geq 0}$ is strictly nonincreasing.
\begin{lemma}\label{th:1}
Let Assumptions \ref{ass:1}, \ref{ass:2}, \ref{ass:3}, and \ref{th:kkt} hold and the sequence $(x_k)_{k\geq 0}$ be generated by MTA algorithm with $x_0 \in \cal F$, $M_p>L_p, M>0$ and $M_q^i \geq L_q^i + \red{\eta_3}$  for all $i=1:m$. Then, we have:
\begin{itemize}

\item[(i)]  The sequence $(F(x_k))_{k\geq 0}$ is nonincreasing and satisfies the descent:
\begin{align*}
F(x_{k+1}) \leq F(x_k) - \left( \frac{M_p \!-\! L_p}{(p+1)!} \|x_{k+1} - x_k \|^{p+1} + \frac{M}{(q+\!1)!} \|x_{k+\!1} - x_k \|^{q+\!1}  \right). 
\end{align*}

\item[(ii)] The set ${\cal F}(x_k)$ is nonempty and ${\cal F}(x_k) \subseteq {\cal F}$ for all $k\geq 0$. Additionally, the sequence $(x_k)_{k\geq 0}$ is feasible for the original problem \eqref{eq:problem}, bounded, and has a finite length, i.e.:
\begin{align*}
    \sum_{k=0}^{\infty}\left(\|x_{k+1} - x_k\|^{q+1} + \|x_{k+1} - x_k\|^{p+1}\right) <\infty.
\end{align*}
\end{itemize}
\end{lemma}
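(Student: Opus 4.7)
\emph{Part (i): descent.} The plan is to sandwich $F(x_{k+1})$ between two expressions involving $s_{M_p}^M(x_{k+1};x_k) + h(x_{k+1})$ using the Taylor residual bound and the descent property \eqref{eq:desc}. Specifically, applying \eqref{eq:fineq} to $F_0$ at $y=x_{k+1}$, $x=x_k$ yields
\begin{align*}
F_0(x_{k+1})
&\le T_p^{F_0}(x_{k+1};x_k) + \frac{L_p}{(p+1)!}\|x_{k+1}-x_k\|^{p+1}\\
&= s_{M_p}^M(x_{k+1};x_k) - \frac{M_p-L_p}{(p+1)!}\|x_{k+1}-x_k\|^{p+1} - \frac{M}{(q+1)!}\|x_{k+1}-x_k\|^{q+1}.
\end{align*}
Adding $h(x_{k+1})$ and invoking the descent condition \eqref{eq:desc} (which replaces $s_{M_p}^M(x_{k+1};x_k)+h(x_{k+1})$ by $F(x_k)$) delivers precisely the claimed inequality.

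\emph{Part (ii): feasibility, boundedness, summability.} I would proceed by induction, simultaneously proving (a) $x_k\in{\cal F}$, (b) ${\cal F}(x_k)\neq\emptyset$, and (c) ${\cal F}(x_k)\subseteq{\cal F}$. The base case (a) holds by hypothesis on $x_0$. For (b), observe that substituting $y=x_k$ gives $s_{M_q^i}(x_k;x_k)=T_q^{F_i}(x_k;x_k)=F_i(x_k)\le 0$, so $x_k\in{\cal F}(x_k)$. For (c), for any $y\in{\cal F}(x_k)$ the bound \eqref{eq:fineq_c} gives
\[
F_i(y)\le T_q^{F_i}(y;x_k)+\frac{L_q^i}{(q+1)!}\|y-x_k\|^{q+1}= s_{M_q^i}(y;x_k) + \frac{L_q^i-M_q^i}{(q+1)!}\|y-x_k\|^{q+1}\le 0,
\]
because $M_q^i\ge L_q^i+\eta_3\ge L_q^i$ and $s_{M_q^i}(y;x_k)\le 0$. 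The inductive step then propagates (a): since $x_{k+1}$ is an approximate stationary point, the third A-KKT condition in \eqref{eq:CAKKT} yields $s_{M_q^i}(x_{k+1};x_k)\le \frac{\eta_3}{(q+1)!}\|x_{k+1}-x_k\|^{q+1}$, so applying \eqref{eq:fineq_c} again gives
\[
F_i(x_{k+1})\le s_{M_q^i}(x_{k+1};x_k) + \frac{L_q^i-M_q^i}{(q+1)!}\|x_{k+1}-x_k\|^{q+1} \le \frac{\eta_3+L_q^i-M_q^i}{(q+1)!}\|x_{k+1}-x_k\|^{q+1}\le 0,
\]
using precisely the assumption $M_q^i\ge L_q^i+\eta_3$. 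Thus $x_{k+1}\in{\cal F}$, closing the induction.

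\emph{Boundedness and finite length.} Combining part (i) with $x_k\in{\cal F}$ yields $x_k\in{\cal A}(x_0)$ for every $k$, so $(x_k)$ is bounded by Assumption \ref{ass:2}. Since $F_0$ is continuous and $h$ is locally Lipschitz on its domain (Assumption \ref{ass:1}.3), $F$ is bounded below on the bounded set ${\cal A}(x_0)$; let $F_{\inf}$ denote this bound. Telescoping the descent in part (i) from $0$ to $N-1$ gives
\[
\sum_{k=0}^{N-1}\Bigl(\tfrac{M_p-L_p}{(p+1)!}\|x_{k+1}-x_k\|^{p+1} + \tfrac{M}{(q+1)!}\|x_{k+1}-x_k\|^{q+1}\Bigr)\le F(x_0)-F(x_N)\le F(x_0)-F_{\inf}.
\]
Letting $N\to\infty$ and using $M_p>L_p$, $M>0$ yields the claimed summability.

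\emph{Main obstacle.} The only subtle point is the feasibility propagation: because $x_{k+1}$ is only an \emph{approximate} stationary point of \eqref{eq:rn}, it need not satisfy $s_{M_q^i}(x_{k+1};x_k)\le 0$ exactly, and the third A-KKT bound only controls its positive part by an $O(\|x_{k+1}-x_k\|^{q+1})$ slack. The requirement $M_q^i\ge L_q^i+\eta_3$ is tailored precisely so that this slack is absorbed by the surplus in the Taylor residual, keeping $F_i(x_{k+1})\le 0$; identifying and using this balance is the delicate step.
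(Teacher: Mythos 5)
Your proof is correct and follows essentially the same route as the paper's: the same Taylor-residual sandwich combined with \eqref{eq:desc} for part (i), and the same induction on feasibility (using $x_k\in{\cal F}(x_k)$ for nonemptiness, $M_q^i\ge L_q^i$ for ${\cal F}(x_k)\subseteq{\cal F}$, and the third A-KKT bound absorbed by $M_q^i\ge L_q^i+\eta_3$ for propagating $x_{k+1}\in{\cal F}$), followed by boundedness via Assumption \ref{ass:2} and telescoping for finite length. Your explicit justification that $F$ is bounded below on ${\cal A}(x_0)$ is a welcome detail the paper leaves implicit.
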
 

\begin{proof}
(i) Writing  inequality \eqref{eq:desc} explicitly,  we have:
\begin{align*}
T_p^{F_0}(x_{k+1};x_k) + \frac{M_p}{(p+1)!}\| x_{k+1} \!\!-\! x_k \|^{p+1} + \frac{M}{(q+1)!}\|x_{k+1} \!\!-\! x_k \|^{q+1}  + h(x_{k+1}) \leq F(x_k).
\end{align*}


\noindent On the other hand, from \eqref{eq:fineq} we have:
\begin{align*}
    -\frac{L_p}{(p+1)!}\|x_{k+1} - x_k\|^{p+1} + F_0(x_{k+1})\leq T_p^{F_0}(x_{k+1};x_k), 
\end{align*}
which, combined with the previous inequality, yields:
\begin{align*}
\frac{M_p - L_p}{(p+1)!}\|x_{k+1} - x_k\|^{p+1} + \frac{M}{(q+1)!}\|x_{k+1} - x_k \|^{q+1} + F(x_{k+1})\leq F(x_k),
\end{align*}
proving the first statement of the lemma. \\

\noindent (ii) Regarding the second statement, let us first prove that ${\cal F}(x_k) \subseteq {\cal F}$ for all $k\geq 0$. Indeed, if $y \in {\cal F}(x_k)$, then $s_{M_q^i}(y;x_k) \leq 0$ for all $i=1:m$. Then, from \eqref{eq:fineq_c} and the fact that  $M_q^i \geq L_q^i$, we have:
\[  F_i(y) \leq  s_{L_q^i}(y;x_k) \leq s_{M_q^i}(y;x_k) \leq 0 \quad \forall i=1:m,  \]
which implies  that $y \in {\cal F}$ and thus ${\cal F}(x_k) \subseteq {\cal F}$.  
Further, if $M_q^i \geq L_q^i + \eta_3$ and $x_0\in \cal F$, then the subproblem \eqref{eq:rn} is well-defined, i.e., its feasible set is nonempty,  ${\cal F}(x_k) \neq \emptyset$, and, additionally,  $x_k$ is  also feasible for the original problem \eqref{eq:problem} for all $k\geq 0$. Indeed,  ${\cal F}(x_0) \neq \emptyset$, since $x_0 \in {\cal F}(x_0)$ (recall that  $x_0\in {\cal F}$, hence we have $s_{M_q^i}(x_0;x_0) = F_i(x_0) \leq 0$ for all $i=1:m$ and thus $x_0 \in {\cal F}(x_0)$).  Now let us prove that $x_1$ is feasible for problem \eqref{eq:problem}. Indeed,  from Assumption  \ref{th:kkt} and  \eqref{eq:fineq_c}, we~get:
\begin{align*}
 \frac{M_q^i - L_q^i}{(q+1)!}\|x_1 - x_0\|^{q+1} + F_i(x_1) & \leq s_{M_q^i}(x_1;x_0) \leq {\left(s_{M_q^i}(x_1;x_0) \right)_+}\\
 &\leq \frac{\red{\eta_3}}{(q+1)!} \|x_1 - x_0\|^{q+1}\;\;\; \forall i=1:m,
\end{align*}
and thus $F_i(x_1) \leq 0$  provided that  $M_q^i \geq L_q^i + \red{\eta_3}$ for all $i=1:m$. Hence,   $x_1 \in \mathcal{F}$. Therefore, the iterate $x_{1}$ is  feasible for the original problem \eqref{eq:problem}. Further, since $x_1 \in \mathcal{F}$, using the same arguments as for $x_0$, we get that $x_1 \in \mathcal{F}(x_1)$ and consequently ${\cal F}(x_1) \neq \emptyset$.  By induction, using the same arguments as before, we can easily prove that $ x_k \in {\cal F}(x_k) \neq \emptyset$  and  $x_k \in \mathcal{F}$   (i.e., $x_k$ is  feasible for the original problem \eqref{eq:problem}) for all $k\geq 0$. Further, since $(F(x_k))_{k\geq 0}$ is nonincreasing, then $x_{k}\in\mathcal{A}(x_0)$ and hence from Assumption \ref{ass:2} the sequence $(x_k)_{k\geq 0}$ is bounded. Finally, the last statement follows by summing up the descent inequality in function values from item (i).
\end{proof}

\noindent Note that from the  previous lemma, it follows that there exists $D > 0$ such that $$\|x_k\|\leq D \quad  \forall k\geq 0.$$ 
Therefore, the sequence $(x_k)_{k\geq 0}$ has limit points. Next, we show that the sequence of the multipliers $(u^{k})_{k\geq 1}$ given in \eqref{eq:CAKKT} is bounded.

\begin{lemma}\label{lem:3}
Let Assumptions \ref{ass:1}, \ref{ass:2}, \ref{ass:3} and \ref{th:kkt} hold. Then,  the multipliers $(u^{k})_{k\geq 0}$ defined in \eqref{eq:CAKKT} are bounded, i.e., there exists $C_u>0$ such that:
    \begin{align*}
        \|u^{k}\|\leq C_u \;\; \forall k\geq 0.
    \end{align*}
\end{lemma}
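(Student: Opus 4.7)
The plan is to prove boundedness by contradiction, following the classical argument used to bound KKT multipliers under MFCQ, but adapted to the approximate KKT conditions \eqref{eq:CAKKT} along the sequence. Suppose, to reach a contradiction, that $(u^k)_{k\geq 1}$ is unbounded. Passing to a subsequence, we may assume $\|u^{k+1}\|\to\infty$. Since $(x_k)_{k\geq 0}$ is bounded by Lemma \ref{th:1}(ii), after extracting a further subsequence we also have $x_k\to x^*$ with $x^*\in\mathcal{F}$ by closedness of $\mathcal{F}$. Crucially, the finite-length statement in Lemma \ref{th:1}(ii) forces $\|x_{k+1}-x_k\|\to 0$, so $x_{k+1}\to x^*$ as well. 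Define the normalized multipliers $v^{k+1}=u^{k+1}/\|u^{k+1}\|$; a further subsequence converges to some $v^*\geq 0$ with $\|v^*\|=1$.

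Next I would analyze which components of $v^*$ can be nonzero. For an index $i\notin I(x^*)$, i.e.\ $F_i(x^*)<0$, continuity gives $s_{M_q^i}(x_{k+1};x_k)\to F_i(x^*)<0$, so in particular $|s_{M_q^i}(x_{k+1};x_k)|$ is bounded below by a positive constant along the tail of the subsequence. The second line of \eqref{eq:CAKKT} then yields
\begin{align*}
u_i^{k+1}\;\leq\;\frac{\eta_2\|x_{k+1}-x_k\|^{q+1}}{(q+1)!\,|s_{M_q^i}(x_{k+1};x_k)|}\;\longrightarrow\;0,
\end{align*}
so $u_i^{k+1}$ is bounded and $v_i^*=0$. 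Hence $v^*$ is supported on $I(x^*)$.

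Now I would divide the stationarity line of \eqref{eq:CAKKT} by $\|u^{k+1}\|$ and pass to the limit. The term $\nabla s_{M_p}^M(x_{k+1};x_k)\to\nabla F_0(x^*)$, which is bounded; the subgradients $\Lambda_{k+1}\in\partial h(x_{k+1})$ stay in a bounded set because $h$ is locally Lipschitz by Assumption \ref{ass:1} and the iterates lie in the bounded set $\mathcal{A}(x_0)$; and the right-hand side $\eta_1\|x_{k+1}-x_k\|^{\min(p,q)}/\|u^{k+1}\|$ tends to $0$. By continuity, $\nabla s_{M_q^i}(x_{k+1};x_k)\to\nabla F_i(x^*)$, so in the limit
\begin{align*}
\sum_{i\in I(x^*)} v_i^*\,\nabla F_i(x^*)=0,\qquad v^*\geq 0,\ \|v^*\|=1.
\end{align*}

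Finally, apply MFCQ (Assumption \ref{ass:3}) at the feasible point $x^*$: there exists $d\in\mathbb{E}$ with $\langle\nabla F_i(x^*),d\rangle<0$ for every $i\in I(x^*)$. Pairing the displayed equality with $d$ yields $\sum_{i\in I(x^*)} v_i^*\langle\nabla F_i(x^*),d\rangle=0$, but every term of the sum is $\leq 0$ and at least one is strictly negative since $\|v^*\|=1$ and $v^*$ is supported on $I(x^*)$. This is the desired contradiction. The main technical obstacle is handling the residual terms carefully: making sure $\Lambda_{k+1}$ stays bounded (using local Lipschitz continuity of $h$ together with the boundedness of $(x_k)$) and using the approximate complementarity in \eqref{eq:CAKKT} jointly with $\|x_{k+1}-x_k\|\to 0$ to rule out spurious contributions from inactive constraints; once these are in place the MFCQ argument closes the proof.
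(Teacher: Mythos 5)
Your proposal is correct and follows essentially the same route as the paper's proof: assume unboundedness, normalize the multipliers (you divide by $\|u^{k+1}\|$, the paper by $\sum_i u_i^{k+1}$), pass to the limit in the A-KKT conditions \eqref{eq:CAKKT} using the boundedness of the iterates and of $\partial h$, kill the inactive components via approximate complementarity, and contradict MFCQ at the limit point. Your explicit use of the finite-length property to get $\|x_{k+1}-x_k\|\to 0$ (needed so that $s_{M_q^i}(x_{k+1};x_k)\to F_i(x^*)$) is a detail the paper leaves implicit, but it does not change the argument.
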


\begin{proof}
We use a similar reasoning as in \cite{AuShTe:10,YuPoLu:21}. Suppose the contrary, that is, there exist $u_i \geq 0$, for $i=1:m$, $x_{\infty} \in\mathcal{F}$, a subsequence $(u^k)_{k\in K}$, $(x_k)_{k\in K}$ with $K\subseteq \mathbb{N}$ and $\Lambda_{k+1}\in\partial h(x_{k+1})$ such that:
\begin{align*}
     \lim_{k\mapsto \infty,k\in K} \sum_{i=1}^{m} u_i^{k+1} = +\infty,\;  \lim_{k\mapsto \infty,k\in K} \frac{u_i^{k+1}}{\sum_{i=1}^{m} u_i^{k+1}} = u_i,\; \text{with} \sum_{i=1}^{m}u_i = 1,
\end{align*}
and additionally $\lim \limits_{k\mapsto \infty, k\in K} x_{k+1} = x_\infty$ and $\lim \limits_{k\mapsto \infty, k\in K}\Lambda_{k+1} = \Lambda \in\partial h(x_\infty)$ (note that $\partial h(x)$ is closed and bounded for all $x\in\text{dom}\, h$, since $h$ is assumed to be locally Lipschitz, see Theorem 9.13 in \cite{Roc:70}). Further, dividing the first and second equalities in \eqref{eq:CAKKT}  with the quantity $\sum_{i=1}^{m} u_i^{k+1}$, we get:
\begin{align*}
     &\frac{1}{\sum_{i=1}^{m} u_i^{k+1}}\left\| \nabla s_{M_p}^M(x_{k+1};x_k) + \Lambda_{k+1} + \sum_{i=1}^{m} u_i^{k+1} \nabla s_{M_q^i}(x_{k+1};x_k)\right\|\\
     &\qquad\qquad\qquad \leq  \frac{\red{\eta_1}}{\sum_{i=1}^{m} u_i^{k+1}}\|x_{k+1} - x_k\|^{\min(p,q)},\\
     &\frac{u_i^{k+1}}{\sum_{i=1}^{m} u_i^{k+1}}\left |s_{M_q^i}(x_{k+1};x_k)\right | \leq \frac{\red{\eta_2}}{\sum_{i=1}^{m} u_i^{k+1}(q+1)!} \|x_{k+1} - x_k\|^{q+1}.
\end{align*}
\noindent Since the Taylor functions $s_{M_p}^M$ and $s_{M_q^i}$'s, for $i=1:m$, are continuous and $(x_k)_{k\geq 0}$ is bounded, then passing to the limit as $k \to\infty, \;k \in K$, we obtain:
\begin{align*}
    &\sum_{i=1}^{m}u_i\nabla s_{M_q^i}(x_{\infty};x_{\infty}) = 0,
    & u_i s_{M_q^i}(x_{\infty};x_{\infty}) = 0,\; s_{M_q^i}(x_\infty;x_\infty)\leq 0\;\;\text{for}\; i=1:m.    
\end{align*}
\noindent It follows from the definition of $s_{M_q^i}$, for $i=1:m$,  that:
\begin{align*}
    &\sum_{i=1}^{m} u_i \nabla F_i(x_\infty) = 0, \;\;u_i F_i(x_{\infty}) = 0,\;\; F_i(x_\infty)\leq 0\;\;\text{for}\; i=1:m. 
\end{align*}
\noindent If $I(x_\infty) = \emptyset$ (see Assumption \ref{ass:3}), then for all $i=1:m$, $F_i(x_\infty) <0$ and hence $u_i = 0$, for $i=1:m$. This is a contradiction with $\sum_{i=1}^{m}u_i = 1$. Further, assume that $ I(x_\infty) \neq \emptyset$. Since we have $\sum_{i=1}^{m}u_i = 1$, then there exists $\mathcal{I}\subseteq I(x_\infty)$, $\mathcal{I} \not= \emptyset$, such that $u_i  > 0$ for all $i\in\mathcal{I}$.  From Assumption \ref{ass:3}, there exists $d\in\mathbb{E}$ such that:
\begin{align*}
    0 =  \left\langle\sum_{i=1}^{m} u_i\nabla  F_i(x_\infty),d \right\rangle =  \sum_{i\in \mathcal{I}}u_i \langle \nabla F_i(x_\infty),d\rangle <0,  
\end{align*}
which is a contradiction with MFCQ assumption. Hence, our statement follows.
\end{proof}


\subsection{Convergence rate to KKT points}\label{sec:kkt}
\noindent In the general nonconvex case we want to see how fast we can satisfy (approximately) the KKT  optimality conditions for the problem \eqref{eq:problem}. We consider points satisfying the first order local necessary optimality conditions for problem \eqref{eq:problem}, i.e., points which belong to ${\cal S}$:
\begin{align}\label{eq:kkt-op2}
 {\cal S} = &\left\lbrace x\in {\cal F}: \exists\; u_i \geq 0,\; \Lambda\in \partial h(x) \;\text{s.t.}:\nabla F_0(x) + \Lambda \!+\! \sum_{i=1}^{m} u_i \nabla F_i(x) \!=\! 0,\right.\\ 
 &\qquad\qquad\qquad\qquad\qquad \left.u_i F_{i}(x) \!=\! 0, \;i=1\!:\!m \right\rbrace.\nonumber   
\end{align}

\noindent Hence, an appropriate measure of optimality is optimality and complementary violations of KKT conditions. Therefore, for $\Lambda_{k+1}\in\partial h(x_{k+1})$ we define the  map:
\begin{align*}
{\cal M}(x_{k+1}) =\max &\left\{ \left\|  \nabla F_0(x_{k+1}) + \Lambda_{k+1} + \sum_{i=1}^m u^{k+1}_i \nabla F_i(x_{k+1}) \right\|,\right.\\
&\qquad\left. \Big(- u^{k+1}_i F_i(x_{k+1})\Big)^{\frac{q}{q+1}}, \; i=1:m \right\}.
\end{align*}
 \noindent Recall that   $M_p > L_p$.  For simplicity, let us introduce the following constants $C_1 = \frac{L_p + M_p}{p!} $, $C_2 = \left(\frac{C_u\sum_{i=1}^{m} (M_q^i + L_q^i) + M}{q!} +  \left(C_u\left( \max_{i=1:m} \frac{M_q^i + L_q^i }{(q+1)!}\right) + \frac{\red{\eta_2}}{(q+1)!}\right)^{\frac{q}{q+1}} \right) $ and 
 
\begin{align*}
 C = \max&\left(\frac{((q+1)!)^{\frac{q}{q+1}}\left(C_1 (2D)^{p-q} + \red{\eta_1} + C_2 \right) (F(x_0) - F_{\infty})^{\frac{q}{q+1}} }{M^{\frac{q}{q+1}}},\right.\\
&\qquad\qquad\qquad\left. \frac{((p+1)!)^{\frac{p}{p+1}}\left(C_1 + \red{\eta_1}  + C_2 (2D)^{q-p} \right) (F(x_0) - F_{\infty})^{\frac{p}{p+1}} }{(M_p - L_p)^{\frac{p}{p+1}}}  \right).
\end{align*}

\noindent Then, we have the following convergence rate for the measure of optimality $\mathcal{M}(x_k)$:

\begin{theorem}\label{th:2}
Let the assumptions of Lemma \ref{th:1} hold and $(x_k)_{k\geq 0}$ be generated by MTA algorithm. Then, there exists $\Lambda_{k+1}\in\partial h(x_{k+1})$ such that:
\begin{enumerate}
    \item[(i)]  The following bound holds:
    \begin{align*}
         {\cal M}(x_{k+1})\leq C_1\|x_{k+1} - x_k\|^p +\red{\eta_1 \|x_{k+1}- x_k\|^{\min(p,q)}} + C_2\|x_{k+1} - x_k\|^q.   
    \end{align*}
    \red{Consequently, if $x_{k+1} = x_k$, then  $x_k$ is a KKT point of the original nonconvex problem \eqref{eq:problem}. }
    
    \item[(ii)] The sequence $\left(\mathcal{M}(x_k)\right)_{k\geq 0}$ converges to $0$ with the following sublinear rate:
    
    \begin{align*}
      \min_{j=1:k}{\cal M}(x_j)\leq \frac{C}{k^{\min \left(\frac{q}{q+1},\frac{p}{p+1}\right)}}.   
    \end{align*}
\end{enumerate}
\end{theorem}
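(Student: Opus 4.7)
The plan is to prove the two items in sequence, using the Approximate KKT conditions of Assumption \ref{th:kkt} to transfer bounds from the regularized Taylor models $s_{M_p}^M(\cdot;x_k)$ and $s_{M_q^i}(\cdot;x_k)$ to the true functions $F_0, F_i$, and then to couple the resulting expression with the per-iteration decrease from Lemma \ref{th:1}(i). The main technical obstacle is to correctly collect the three power scales $\|x_{k+1}-x_k\|^p$, $\|x_{k+1}-x_k\|^q$ and $\|x_{k+1}-x_k\|^{\min(p,q)}$ and then translate these mixed rates into a single telescoped bound.

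For item (i), I would first use the triangle inequality on the optimality expression. Writing the gradient of the regularized model as $\nabla s_{M_p}^M(x_{k+1};x_k) = \nabla T_p^{F_0}(x_{k+1};x_k) + \tfrac{M_p}{p!}\|x_{k+1}-x_k\|^{p-1}B(x_{k+1}-x_k) + \tfrac{M}{q!}\|x_{k+1}-x_k\|^{q-1}B(x_{k+1}-x_k)$, and similarly for the $s_{M_q^i}$ parts, I would apply the Taylor gradient bound \eqref{eq:TayAppG1} to obtain
\begin{align*}
\|\nabla F_0(x_{k+1})-\nabla s_{M_p}^M(x_{k+1};x_k)\|_* &\leq \tfrac{L_p+M_p}{p!}\|x_{k+1}-x_k\|^p+\tfrac{M}{q!}\|x_{k+1}-x_k\|^q,\\
\|\nabla F_i(x_{k+1})-\nabla s_{M_q^i}(x_{k+1};x_k)\|_* &\leq \tfrac{L_q^i+M_q^i}{q!}\|x_{k+1}-x_k\|^q.
\end{align*}
Combining with the first inequality of \eqref{eq:CAKKT} and with the multiplier bound $\|u^{k+1}\|\leq C_u$ from Lemma \ref{lem:3}, I obtain the stationarity component of $\mathcal{M}(x_{k+1})$ as a sum of $\|x_{k+1}-x_k\|^p$, $\|x_{k+1}-x_k\|^q$ and $\|x_{k+1}-x_k\|^{\min(p,q)}$ contributions.

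For the complementary slackness component, I would bound $-u_i^{k+1}F_i(x_{k+1}) = -u_i^{k+1}s_{M_q^i}(x_{k+1};x_k) + u_i^{k+1}(s_{M_q^i}(x_{k+1};x_k)-F_i(x_{k+1}))$, using the second A-KKT inequality to control the first term and \eqref{eq:TayAppBound} together with the definition of $s_{M_q^i}$ to control the second as $\tfrac{L_q^i+M_q^i}{(q+1)!}\|x_{k+1}-x_k\|^{q+1}$. Raising to the power $q/(q+1)$ yields the $\|x_{k+1}-x_k\|^q$ contribution with the constant involving $\bigl(C_u \max_i \tfrac{M_q^i+L_q^i}{(q+1)!} + \tfrac{\eta_2}{(q+1)!}\bigr)^{q/(q+1)}$, matching $C_2$. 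Taking the maximum over the two components and collecting constants gives exactly the claimed bound, and if $x_{k+1}=x_k$ the right-hand side vanishes, so the limit of the A-KKT relations reduces to the KKT conditions \eqref{eq:kkt-op2}.

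For item (ii), I would telescope the descent inequality of Lemma \ref{th:1}(i) to get
\begin{align*}
\sum_{j=0}^{k-1}\Bigl(\tfrac{M_p-L_p}{(p+1)!}\|x_{j+1}-x_j\|^{p+1} + \tfrac{M}{(q+1)!}\|x_{j+1}-x_j\|^{q+1}\Bigr) \leq F(x_0)-F_\infty,
\end{align*}
so that $\min_{j=1:k}\|x_j-x_{j-1}\|^{p+1}$ and $\min_{j=1:k}\|x_j-x_{j-1}\|^{q+1}$ both decay like $1/k$. Raising to the powers $p/(p+1)$ and $q/(q+1)$ gives decay of $\|\cdot\|^p$ and $\|\cdot\|^q$ along the subsequence attaining the minimum in part (i). The cross powers (e.g. $\|x_{j+1}-x_j\|^q$ when $p<q$, or $\|x_{j+1}-x_j\|^{\min(p,q)}$) are handled by the uniform bound $\|x_{j+1}-x_j\|\leq 2D$ from Lemma \ref{th:1}(ii), which lets me absorb any extra factor $\|x_{j+1}-x_j\|^{|p-q|}$ into the constant. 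Taking the worst exponent between $q/(q+1)$ and $p/(p+1)$ in the denominator and packing all constants into $C$ yields the advertised rate. The main subtlety is book-keeping: one has to check that for the index $j^\star$ achieving the minimum in the telescoping argument on one of the two sums, the other norm power can still be upper bounded by $2D$ to the excess degree, which is precisely why Assumption \ref{ass:2} (and hence the diameter bound $D$) appears in the constants $C_1(2D)^{p-q}$ and $C_2(2D)^{q-p}$ inside $C$.
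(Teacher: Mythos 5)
Your proposal is correct and follows essentially the same route as the paper: part (i) via the triangle inequality, the Taylor (gradient) bounds, the A-KKT residuals and the multiplier bound $C_u$, and part (ii) via telescoping the descent of Lemma \ref{th:1}(i) and absorbing the cross powers $\|x_{j+1}-x_j\|^{|p-q|}$ into the constant using the diameter bound $2D$. The only cosmetic difference is the order of operations in (ii) — the paper first reduces $\mathcal{M}(x_{k+1})$ to the single power $\|x_{k+1}-x_k\|^{\min(p,q)}$ and then raises to the reciprocal exponent before summing, whereas you telescope first — but the two are equivalent and yield the same constants.
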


\begin{proof}
Let $u^{k+1}_i \geq 0$  be given in \eqref{eq:CAKKT}, then from \eqref{eq:fineq_c}  we get for all $i=1:m$ 

\begin{align*}
&- u^{k+1}_i \frac{L_q^i}{(q+1)!} \|x_{k+1} - x_k \|^{q+1}  \leq u^{k+1}_i \Big( F_i(x_{k+1}) - T^{F_i}_q (x_{k+1} ; x_k)\Big) 
\\
&\qquad\qquad = u^{k+1}_i  F_i(x_{k+1})  - u^{k+1}_i \left(T^{F_i}_q (x_{k+1} ; x_k) + \frac{M_q^i}{(q+1)!}\|x_{k+1} - x_k\|^{q+1}  \right) \\
&\quad\qquad\qquad + u^{k+1}_i  \frac{M_q^i}{(q+1)!}\|x_{k+1} - x_k\|^{q+1} \\
&\qquad\qquad =  u^{k+1}_i  F_i(x_{k+1}) - u^{k+1}_i s_{M_q^i}(x_{k+1};x_k) + u^{k+1}_i  \frac{M_q^i}{(q+1)!} \|x_{k+1} - x_k \|^{q+1}\\
&\qquad\qquad \leq u^{k+1}_i  F_i(x_{k+1}) + \red{ \frac{\eta_2}{(q+1)!} \|x_{k+1} - x_k \|^{q+1}} + u^{k+1}_i  \frac{M_q^i}{(q+1)!} \|x_{k+1} - x_k \|^{q+1},
\end{align*}
\red{where the the last inequality follows from \eqref{eq:CAKKT}}. Since the multipliers are bounded, taking the maximum, we get:

\begin{align}\label{eq:1}
&\max_{i=1:m} \left\lbrace \left(- u^{k+1}_i  F_i(x_{k+1})\right)^{\frac{q}{q+1}} \right\rbrace \\ \nonumber
&\qquad \leq \max_{i=1:m} \left\lbrace \left(u^{k+1}_i \left(\frac{M_q^i + L_q^i}{(q+1)!}\right) + \frac{\red{\eta_2}}{(q+1)!} \right)^{\frac{q}{q+1}} \|x_{k+1} - x_k \|^q \right\rbrace\\ \nonumber 
&\qquad\leq \left(C_u \left( \max_{i=1:m} \frac{M_q^i + L_q^i }{(q+1)!} \right) + \frac{\red{\eta_2}}{(q+1)!}\right)^{\frac{q}{q+1}} \|x_{k+1} - x_k \|^{q}.
\end{align}
Further, let $\Lambda_{k+1}\in\partial h(x_{k+1})$, then we have:
\medskip
\begin{align*}
     &\left\| \nabla F_0(x_{k+1}) + \Lambda_{k+1} + \sum_{i=1}^m u^{k+1}_i \nabla F_i(x_{k+1}) \right\| \\
     &\leq  \|\nabla F_0(x_{k+1})  - \nabla T^{F_0}_p(x_{k+1} ; x_k) \| + \left\Vert \nabla T^{F_0}_p(x_{k+1};x_k) + \Lambda_{k+1}\right.\\
     &\qquad\left. + \frac{M_p}{p!}\|x_{k+1} - x_k\|^{p-1}(x_{k+1} - x_k) + \frac{M}{q!}\|x_{k+1} - x_k\|^{q-1}(x_{k+1} - x_k)\right.\\
    &\qquad \left. + \sum_{i=1}^m u^{k+1}_i \left(\nabla T^{F_i}_q(x_{k+1};x_k) + \frac{M_q^i}{q!}\|x_{k+1} - x_k\|^{q-1}(x_{k+1} - x_k)\right) \right\Vert\\
    & \quad + \left\Vert -\frac{M_p}{p!} \|x_{k+1} - x_k\|^{p-1}(x_{k+1} - x_k)\right.\\
    & \left. \qquad\quad - \frac{\sum_{i=1}^m u^{k+1}_i M_q^i + M}{q!}\|x_{k+1} - x_k\|^{q-1}(x_{k+1} - x_k)\right\Vert \\
    &\quad + \left\Vert \sum_{i=1}^m u^{k+1}_i \left(\nabla F_i(x_{k+1}) - \nabla T^{F_i}_q(x_{k+1};x_k)\right)  \right\Vert\\
    & \leq  \frac{L_p + M_p}{p!} \|x_{k+1} - x_k\|^p + \red{\eta_1 \|x_{k+1}- x_k\|^{\min(p,q)}} \\
    &\qquad\quad+ \frac{\sum_{i=1}^m u^{k+1}_i (M_q^i+L_q^i) + M}{q!} \|x_{k+1} - x_k\|^q \\
    &\leq \frac{L_p + M_p}{p!}\|x_{k+1} - x_k\|^p + \red{\eta_1 \|x_{k+1}- x_k\|^{\min(p,q)}} \\
    &\qquad\quad+ \frac{ C_u \sum_{i=1}^{m} (M_q^i + L_q^i) + M}{q!}\|x_{k+1} - x_k\|^q,
\end{align*}
where the second inequality follows from \eqref{eq:fineq}, \eqref{eq:fineq_c}, and Assumption \ref{th:kkt}, \red{i.e., the relations \eqref{eq:CAKKT}}. The last inequality follows from Lemma \ref{lem:3}. Combining this inequality with \eqref{eq:1}, we get:
\begin{align*}
    &{\cal M}(x_{k+1}) \leq \frac{L_p + M_p}{p!}\|x_{k+1} - x_k\|^{p} + \red{\eta_1 \|x_{k+1}- x_k\|^{\min(p,q)}}\\
    &\; + \!\left(\!\frac{C_u \sum_{i=1}^{m} (M_q^i + L_q^i) \!+\! M}{q!} + \! \left(C_u \! \left( \! \max_{i=1:m} \frac{M_q^i + L_q^i }{(q+1)!}\right) \!+ \frac{\red{\eta_2}}{(q+1)!}\right)^{\frac{q}{q+1}}\! \right) \! \|x_{k+1} - x_k\|^q\\
     &= C_1 \|x_{k+1} - x_k\|^p + \red{\eta_1 \|x_{k+1}- x_k\|^{\min(p,q)}} + C_2\|x_{k+1} - x_k\|^q.
\end{align*}
 Hence, the first assertion follows. Further, if $q\leq p$, then it follows that:
\begin{align*}
    {\cal M}(x_{k+1}) \leq \left(C_1 \|x_{k+1} - x_k\|^{p-q} + \red{\eta_1} + C_2 \right)\|x_{k+1} - x_k\|^q. 
\end{align*}
Since we have $\|x_{k+1} - x_k\|\leq 2D$ (see Lemma \ref{th:1}), then:
\begin{align*}
    \mathcal{M}(x_{k+1})\leq \left(C_1 (2D)^{p-q} + \red{\eta_1}+ C_2 \right)\|x_{k+1} - x_k\|^q.
\end{align*}
Combining this inequality with the descent from Lemma \ref{th:1} (i), we get:
\begin{align*}
    \mathcal{M}(x_{k+1})^{\frac{q+1}{q}}\leq \frac{\left(C_1 (2D)^{p-q} + \red{\eta_1} + C_2 \right)^{\frac{q+1}{q}} (q+1)!}{M}\big( F(x_k) - F(x_{k+1})\big).
\end{align*}
Summing up this inequality and taking the minimum, we obtain:
\begin{align}\label{eq:cv0}
\min_{j=1:k}{\cal M}(x_j)\leq \frac{((q+1)!)^{\frac{q}{q+1}} (C_1 (2D)^{p-q} + \red{\eta_1} + C_2)(F(x_0) - F_{\infty})^{\frac{q}{q+1}} }{M^{\frac{q}{q+1}}k^{\frac{q}{q+1}}}.
\end{align}
On the other hand, if $p\leq q$, then we also have:
    \begin{align*}
       {\cal M}(x_k) &\leq \left(C_1 + \red{\eta_1}  + C_2 \|x_{k+1} - x_k\|^{q-p} \right)\|x_{k+1} - x_k\|^p\\
             &\leq \left(C_1 + \red{\eta_1} + C_2 (2D)^{q-p} \right) \|x_{k+1} - x_k\|^p. 
    \end{align*}
Combining this inequality with the descent from Lemma \ref{th:1} (i), we get:
\begin{align*}
    \mathcal{M}(x_{k+1})^{\frac{p+1}{p}}\leq \frac{\left(C_1 + \red{\eta_1}  + C_2 (2D)^{q-p} \right)^{\frac{p+1}{p}} (p+1)!}{M_p - L_p} \left(F(x_k) - F(x_{k+1})\right).
\end{align*}
Summing up this inequality and taking the minimum, we obtain:
\begin{align}\label{eq:cv1}
\min_{j=1:k}{\cal M}(x_j)\leq \frac{((p+1)!)^{\frac{p}{p+1}} (C_1 + \red{\eta_1} + C_2 (2D)^{q-p})(F(x_0) - F_{\infty})^{\frac{p}{p+1}} }{(M_p - L_p)^{\frac{p}{p+1}}k^{\frac{p}{p+1}}}.
\end{align}
 Hence, combining inequalities \eqref{eq:cv0} and \eqref{eq:cv1}, our assertion follows.
\end{proof}

\begin{remark}\label{rem:1}
    Theorem \ref{th:2}, shows that  there exist a subsequence of the sequence $(x_k)_{k\geq 0}$, generated by MTA algorithm, which converges to a KKT point of the original problem \eqref{eq:problem}. \red{Moreover,  from Theorem \ref{th:2} (i) it follows that any fixed point of MTA algorithm is a KKT point of the original nonconvex problem \eqref{eq:problem}.}  If $p=q$, then the convergence rate is of order $\mathcal{O}\left(k^{-\frac{p}{p+1}}\right)$, which is the usual convergence rate for higher-order algorithms for (unconstrained) nonconvex problems  \cite{Mar:17,NecDan:20,CaGoTo:20, CaGoTo:22, NabNec:23}.   If $M = 0$ (in this case  \eqref{eq:cv0} is replaced with an inequality similar to the one in \eqref{eq:cv1}), then the convergence rate in the minimum of the optimality map $\mathcal{M}(x_k)$ is of order $\mathcal{O}\left(k^{-\min\left(\frac{q}{p+1}, \frac{p}{p+1}\right)}\right)$. Thus, if $q\leq p$ we have $\frac{q}{p+1}\leq \frac{p}{p+1}$, and hence the convergence rate becomes $\mathcal{O}\left(k^{-\frac{q}{p+1}}\right)$, which is worse than the rate $\mathcal{O}\left(k^{-\frac{q}{q+1}}\right)$. For a better understanding of this situation, consider a particular case: $p=2$ and $q=1$. Then, for $M=0$, the rate is $\mathcal{O} \left(k^{-\frac{1}{3}} \right)$, while for $M >0$ the rate is $\mathcal{O} \left(k^{-\frac{1}{2}} \right)$. In conclusion, it is beneficial to have additionally the regularization of order $q+1$ in the objective function since it leads to faster convergence rates. 
\end{remark}


\red{
\subsection{Adaptive MTA}
Note that  MTA algorithm \ref{alg:1}  requires knowledge of  the Lipschitz constants $L_p$ and $L_q^i$ for all $i=1:m$.  However, such parameters may not always be available. To address this limitation, we introduce below an adaptive MTA algorithm that does not require prior knowledge of these Lipschitz constants, making it  practical  in scenarios where such information is unavailable or difficult to estimate.}

\begin{algorithm}
\caption{Adaptive-MTA: Adaptive Moving Taylor Approximation}
\label{alg:adap1}
\begin{algorithmic}[1] 
\STATE \textbf{Given:} $x_0 \in {\cal F}$,  $R_p > 0$, $\eta_1,\eta_2,\eta_3 >0$, and $ M, M_{p,0}, M_{q,0}^i > 0$ for $i=1:m$.\\ 
Initialize: $j \gets 0$ and $k \gets 0$.
\WHILE{stopping criteria not met}
    \STATE \textbf{Step 1:} Compute $x_{k+1}$ for \eqref{eq:rn} satisfying \eqref{eq:desc} and \eqref{eq:CAKKT} with parameters: \\
    $
    M_{p,k} \gets  2^{j} M_{p,k} $ \; and \;  $ M_{q,k}^{i} \gets  2^{j} M_{q,k}^i \; \text{ for } i=1:m.
    $
    \IF{the following properties hold
    \begin{align} 
    \label{desc:adaptiveMTA}
        &\frac{R_p}{(p+1)!} \|x_{k+1} - x_k\|^{p+1} + \frac{M}{(q+1)!}\|x_{k+1} - x_k\|^{q+1} \leq F(x_k) - F(x_{k+1}) \\
        &\text{ and } \; F_i(x_{k+1}) \leq 0\; \text{ for all }\; i=1:m \nonumber
    \end{align}}
    \STATE \textbf{Step 2:} Update  $M_{p,k+1} \gets  \max(2^{j-1}M_{p,k}, M_{p,0})$ and $M_{q,k+1}^i \gets \max(2^{j-1}M_{q,k}^i , M_{q,0}^i) \;\;  \forall i=1:m$, \;\;  $k  \gets  k+1$ \; and \; $j \gets 0$.
    \ELSE 
    \STATE Set $j \gets j+1$ and go to \textbf{Step 1}. 
    \ENDIF
\ENDWHILE
\end{algorithmic}
\end{algorithm}

\medskip 

\noindent \red{First, one should notice that Algorithm  \ref{alg:adap1} is well-defined, i.e., at each iteration $k$, Step 1  is executed  only  a finite number of times (or,  equivalently, $j$ is updated only a finite number of steps). Indeed, as observed in the proof of \cref{th:1}, we can ensure the descent and the feasibility conditions in \eqref{desc:adaptiveMTA} if $M_{p,k}$ satisfies   \( R_p +  L_p \leq M_{p,k}  \) together with the descent  \eqref{eq:desc} and $ M_{q,k}^i$'s satisfy \( \eta_3 + L_q^i \leq M_{q,k}^i \) for all \( i = 1:m \), respectively. Hence, doubling $M_{p,k}$ and $ M_{q,k}^i$'s at each step $j$, these conditions will be met after a finite number of doublings. However, the conditions \eqref{desc:adaptiveMTA} may hold for values of $M_{p,k}$ and $M^i_{q,k}$ smaller than $L_p$ and $L^i_q$, respectively,  since these conditions need to hold only at $x_{k+1}$, not everywhere. }

\medskip 

\noindent \red{Note that the descent and the feasibility conditions   in \eqref{desc:adaptiveMTA} are sufficient to prove  the statements of \cref{th:1} and \Cref{lem:3}. Indeed, the descent condition in \eqref{desc:adaptiveMTA} coincides with  the first statement of  \cref{th:1}. Moreover,  the feasibility conditions in \eqref{desc:adaptiveMTA} allows us to show that the feasible set of subproblem \eqref{eq:rn} is nonempty,  obtaining  the second statement of   \cref{th:1}. Finally, computing $x_{k+1}$ satisfying  \eqref{eq:CAKKT}, leads to validity of Lemma  \ref{lem:3}.  Note also that we have $M_{p,0} \leq M_{p,k} \leq 2(R_p + L_p)$ and $M_{q,0}^i \leq M_{q,k}^i \leq 2(\eta_3 + L_q^i)$ for all $k\geq 0$ and $i=1:m$. Consequently,  following the same convergence analysis as in Section \ref{sec:kkt}, we can show that the limit points of the iterates generated by  Adaptive-MTA (Algorithm \ref{alg:adap1}) also converge to a KKT point of the  original problem~\eqref{eq:problem} and with the same convergence rate as in \cref{th:2}, where $M_p - L_p$ is replaced by $R_p$.}



\subsection{Better convergence under KL}\label{sub_kl}
In this section, we derive  convergence rates for our algorithm  under the KL property. To this end, consider the following Lagrangian function for the problem \eqref{eq:problem} and for the subproblem given in \eqref{eq:rn}:

\vspace{-0.5cm}

\begin{align*}
\mathcal{L}_p(x;u) {=} F(x) + \sum_{i=1}^{m} u_i F_i(x),\;\;\; 
\mathcal{L}_{sp}(y;x;u) {=} s_{M_p}^M(y;x) + h(x) + \sum_{i=1}^{m} u_is_{M_q^i}(y;x).
\end{align*}


\noindent Next, we establish the following results, known as descent-ascent \cite{BoSaTe:18}:
\begin{lemma}\label{th:3}
Let the assumptions of Theorem \ref{th:2} hold. Then, we have:

\vspace{-0.6cm}

    \begin{align}\label{eq:th3}
    &\mathcal{L}_p(x_{k+1};u^{k+1}) - \mathcal{L}_p(x_k;u^k) \nonumber  \\ 
    & \leq - \frac{(M_p - L_p)}{(p+1)!}\|x_{k+1} - x_k\|^{p+1} + \frac{C_u \|M_q + L_q\| + \red{m \eta_2}}{(q+1)!}\|x_k - x_{k-1}\|^{q+1} \nonumber \\  
      &\qquad\qquad\quad- \left( \frac{M + \sum_{i=1}^{m} u_i^{k+1}(M_q^i - L_q^i) + \red{m\eta_2}}{(q+1)!}\right) \|x_{k+1} - x_k\|^{q+1}.  
    \end{align}
\end{lemma}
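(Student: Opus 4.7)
The plan is to expand $\mathcal{L}_p(x_{k+1};u^{k+1}) - \mathcal{L}_p(x_k;u^k)$ into its objective part and its multiplier part:
\begin{align*}
\mathcal{L}_p(x_{k+1};u^{k+1}) - \mathcal{L}_p(x_k;u^k) = \big(F(x_{k+1}) - F(x_k)\big) + \sum_{i=1}^{m} u_i^{k+1} F_i(x_{k+1}) - \sum_{i=1}^{m} u_i^{k} F_i(x_k),
\end{align*}
and then bound each summand separately. For the objective change, I directly apply the descent inequality from Lemma~\ref{th:1}(i), which already produces the first term $-\frac{M_p - L_p}{(p+1)!}\|x_{k+1}-x_k\|^{p+1}$ and a $-\frac{M}{(q+1)!}\|x_{k+1}-x_k\|^{q+1}$ contribution to the coefficient of $\|x_{k+1}-x_k\|^{q+1}$.

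For the ``current step'' multiplier sum, I use \eqref{eq:fineq_c} to write $F_i(x_{k+1}) \leq T_q^{F_i}(x_{k+1};x_k) + \frac{L_q^i}{(q+1)!}\|x_{k+1}-x_k\|^{q+1}$ and then substitute $T_q^{F_i}(x_{k+1};x_k) = s_{M_q^i}(x_{k+1};x_k) - \frac{M_q^i}{(q+1)!}\|x_{k+1}-x_k\|^{q+1}$. Multiplying by $u_i^{k+1}\geq 0$ and invoking the approximate complementary slackness in \eqref{eq:CAKKT} to bound $u_i^{k+1} s_{M_q^i}(x_{k+1};x_k) \leq \frac{\eta_2}{(q+1)!}\|x_{k+1}-x_k\|^{q+1}$, then summing over $i=1:m$, produces a bound of the form
\begin{align*}
\sum_{i=1}^{m} u_i^{k+1} F_i(x_{k+1}) \leq \frac{m\eta_2}{(q+1)!}\|x_{k+1}-x_k\|^{q+1} - \sum_{i=1}^{m} u_i^{k+1}\frac{M_q^i - L_q^i}{(q+1)!}\|x_{k+1}-x_k\|^{q+1},
\end{align*}
which yields the term $-\sum_i u_i^{k+1}(M_q^i - L_q^i)/(q+1)!\,\|x_{k+1}-x_k\|^{q+1}$ that appears in the statement.

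For the ``previous step'' multiplier sum, I need an upper bound on $-\sum_i u_i^k F_i(x_k)$, which is where the $\|x_k - x_{k-1}\|^{q+1}$ term will come from. I apply \eqref{eq:fineq_c} at the pair $(x_k;x_{k-1})$ to get $F_i(x_k) \geq T_q^{F_i}(x_k;x_{k-1}) - \frac{L_q^i}{(q+1)!}\|x_k - x_{k-1}\|^{q+1}$, re-express $T_q^{F_i}$ via $s_{M_q^i}$ as above, and use \eqref{eq:CAKKT} (applied at iteration $k$) to bound $-u_i^k s_{M_q^i}(x_k;x_{k-1}) \leq \frac{\eta_2}{(q+1)!}\|x_k-x_{k-1}\|^{q+1}$. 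Summing over $i$ and using Cauchy--Schwarz together with the multiplier boundedness from Lemma~\ref{lem:3} to collapse $\sum_i u_i^k (M_q^i + L_q^i) \leq \|u^k\|\cdot \|M_q + L_q\| \leq C_u \|M_q + L_q\|$ gives exactly the $\frac{C_u\|M_q + L_q\| + m\eta_2}{(q+1)!}\|x_k - x_{k-1}\|^{q+1}$ ascent term in the statement. Assembling the three pieces yields \eqref{eq:th3}.

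The main obstacle is the asymmetry in the bookkeeping: the multiplier $u^{k+1}$ is tied to the Taylor approximation centered at $x_k$, while $u^k$ is tied to the Taylor approximation centered at $x_{k-1}$. This forces the ``ascent'' contribution $\|x_k-x_{k-1}\|^{q+1}$ to appear naturally from the approximate complementary slackness of the \emph{previous} iteration, and the collapse of $\sum_i u_i^k (M_q^i + L_q^i)$ into a single bounded constant relies crucially on Lemma~\ref{lem:3}. Care is also needed with signs: the approximate complementary slackness in \eqref{eq:CAKKT} is a two-sided bound, and the correct side must be used in each of the two multiplier sums (upper bound for the $u^{k+1}$ sum, lower bound for the $u^k$ sum) to recover the structure in \eqref{eq:th3}.
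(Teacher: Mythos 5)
Your proof is correct and follows essentially the same route as the paper's: the descent of Lemma \ref{th:1}(i) for the objective part, the Taylor bound \eqref{eq:fineq_c} combined with the approximate complementary slackness in \eqref{eq:CAKKT} at steps $k+1$ and $k$ for the two multiplier sums, and Lemma \ref{lem:3} to collapse $\sum_i u_i^k(M_q^i+L_q^i)$ into $C_u\|M_q+L_q\|$; the only difference is presentational, in that you split the Lagrangian difference into three pieces upfront while the paper adds the slack terms to the descent inequality and regroups into Lagrangians afterwards. Note that your assembled bound carries $-m\eta_2$ in the coefficient of $\|x_{k+1}-x_k\|^{q+1}$ rather than the $+m\eta_2$ displayed in \eqref{eq:th3}; this is exactly what the paper's own derivation yields (and what is actually used later in the proof of Lemma \ref{lem:4}), so the discrepancy is a sign typo in the statement of the lemma, not a gap in your argument.
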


\begin{proof}
We have:
\begin{align*}
 &\frac{M_p - L_p}{(p+1)!}\|x_{k+1} - x_k\|^{p+1} + \frac{M}{(q+1)!}\|x_{k+1} - x_k\|^{q+1} \leq F(x_k) - F(x_{k+1})\\
 &\leq  F(x_k) - F(x_{k+1}) - \sum_{i=1}^{m}u^{k+1}_i s_{M_q^i}(x_{k+1};x_k) + \red{\sum_{i=1}^{m} \frac{\eta_2}{(q+1)!} \|x_{k+1} - x_k\|^{q+1}} \\
 &\leq  F(x_k) - F(x_{k+1}) - \sum_{i=1}^{m}u^{k+1}_i\left(F_i(x_{k+1}) +\frac{M_q^i - L_q^i }{(q+1)!} \|x_{k+1} - x_k\|^{q+1}  \right)\\
 &\qquad + \red{\sum_{i=1}^{m} \frac{\eta_2}{(q+1)!} \|x_{k+1} - x_k\|^{q+1}} \\
 & = \mathcal{L}_p(x_k;u^{k+1}) -  \mathcal{L}_p(x_{k+1};u^{k+1}) -\sum_{i=1}^{m}u^{k+1}_i F_i(x_{k}) \\
 &\qquad - \sum_{i=1}^{m}u^{k+1}_i \left( \frac{M_q^i - L_q^i}{(q + 1)!} \|x_{k+1} - x_k\|^{q+1}  \right) + \red{\frac{m\eta_2}{(q+1)!} \|x_{k+1} - x_k\|^{q+1}},   
\end{align*}
where the first inequality follows from Lemma \ref{th:1} (i), \red{the second inequality follows from the A-KKT conditions \eqref{eq:CAKKT} (i.e., Assumption \ref{th:kkt})}, the third inequality follows from \eqref{eq:TayAppBound} and $u_i^k\geq 0$. The last equality follows from the definition of $\mathcal{L}_p$.  Furthermore, we have:

\vspace{-0.6cm}

\begin{align*}
 &\mathcal{L}_p(x_k;u^{k+1}) -  \mathcal{L}_p(x_{k+1};u^{k+1}) - \sum_{i=1}^{m}u^{k+1}_i F_i(x_{k}) \\
    & = \mathcal{L}_p(x_k;u^k) - \mathcal{L}_p(x_{k+1};u^{k+1}) + \mathcal{L}_p(x_k;u^{k+1}) - \mathcal{L}_p(x_k;u^k) - \sum_{i=1}^{m}u^{k+1}_i F_i(x_{k})\\
    & = \mathcal{L}_p(x_k;u^k) - \mathcal{L}_p(x_{k+1};u^{k+1}) + \sum_{i=1}^{m}u_i^{k+1}F_i(x_k) - \sum_{i=1}^{m}u_i^k F_i(x_k) - \sum_{i=1}^{m}u^{k+1}_i F_i(x_{k})\\
    & = \mathcal{L}_p(x_k;u^k) - \mathcal{L}_p(x_{k+1};u^{k+1}) - \sum_{i=1}^{m}u_i^k F_i(x_k), 
\end{align*}

\vspace{-0.2cm}

\noindent where the second equality follows from the definition of the Lagrangian function $\mathcal{L}_p$. On the other hand, from \eqref{eq:TayAppBound} and $u_i^k\geq 0,\; i=1:m$, we have:
\begin{align*}
    - &\sum_{i=1}^{m}u_i^k F_i(x_k) \leq - \sum_{i=1}^{m}u_i^k s_{M_q^i}(x_k;x_{k-1}) + \frac{\sum_{i=1}^{m}u_i^k (M^i_q + L^i_q)}{(q+1)!}\|x_k - x_{k-1}\|^{q+1}\\
    &\qquad\quad \leq  \frac{\sum_{i=1}^{m} (u_i^k ( M^i_q + L^i_q) + \red{\eta_2})}{(q+1)!}\|x_k - x_{k-1}\|^{q+1}\\
    &\qquad \quad\leq \frac{C_u \|M_q + L_q \| + \red{m\eta_2}}{(q+1)!}\|x_k - x_{k-1}\|^{q+1},
\end{align*}
\red{where the second inequality follows from \cref{th:kkt}}. Hence, our statement follows by combining these three inequalities.
\end{proof}

\noindent 
Consider the following Lyapunov function:
\begin{align*}
\xi_p (x;u;z) :=  \mathcal{L}_p(x;u) + \frac{\theta_1}{(p+1)!} \|x - z\|^{p+1} + \frac{\theta_2}{(q+1)!}\|x - z\|^{q+1},   
\end{align*}
where $\theta_1,\theta_2$ are positive constants that will be defined later. The following lemma derives a  relation between the critical points of the functions $\xi_p$ and $\mathcal{L}_p$.

\begin{lemma}
For any $x,y \in\mathbb{E}$ and $u\in\mathbb{R}^m$, it holds that:
\begin{align*}
(x,u,z) \in\emph{crit}\; \xi_p \Rightarrow (x,u)\in \emph{crit}\; \mathcal{L}_p \;\;  \emph{and} \;\;  \xi_p(x;u;z) = \mathcal{L}_p(x;u).
\end{align*}
\end{lemma}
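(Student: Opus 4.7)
The plan is to exploit the fact that $z$ appears in $\xi_p$ only inside the two regularization terms. Setting the gradient with respect to $z$ to zero will force $x=z$, from which both conclusions follow easily.

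First, I would compute $\nabla_z \xi_p(x;u;z)$. Using the identity $\nabla_w \|w\|^{r+1} = (r+1)\|w\|^{r-1} Bw$ (which follows from $\|w\|^2 = \langle Bw,w\rangle$ by the chain rule), one gets
\begin{align*}
\nabla_z \xi_p(x;u;z) = -\frac{\theta_1}{p!}\|x-z\|^{p-1}B(x-z) - \frac{\theta_2}{q!}\|x-z\|^{q-1}B(x-z).
\end{align*}
At a critical point $(x^*,u^*,z^*) \in \text{crit}\,\xi_p$ this expression must vanish, and since $\theta_1,\theta_2>0$ and $B$ is positive definite, this forces $x^* = z^*$.

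Second, substituting $x^*=z^*$ into the definition of $\xi_p$ immediately yields $\xi_p(x^*;u^*;z^*) = \mathcal{L}_p(x^*;u^*)$, since both regularization terms vanish. This establishes the value equality.

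Third, I would use the additive decomposition of the subdifferential (valid because the regularization is continuously differentiable):
\begin{align*}
\partial_x \xi_p(x;u;z) &= \partial_x \mathcal{L}_p(x;u) + \frac{\theta_1}{p!}\|x-z\|^{p-1}B(x-z) + \frac{\theta_2}{q!}\|x-z\|^{q-1}B(x-z), \\
\nabla_u \xi_p(x;u;z) &= \nabla_u \mathcal{L}_p(x;u),
\end{align*}
where the second identity holds because the regularization terms do not depend on $u$. Evaluating at $x^*=z^*$ kills the extra gradient terms in the $x$-subdifferential, so the $\xi_p$-criticality collapses to $0 \in \partial_x \mathcal{L}_p(x^*;u^*)$ and $0 = \nabla_u \mathcal{L}_p(x^*;u^*)$; that is, $(x^*,u^*) \in \text{crit}\,\mathcal{L}_p$.

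No substantial obstacle is anticipated: the only small care needed is correctly differentiating the powers of the $B$-norm and invoking positive definiteness of $B$ together with $\theta_1,\theta_2>0$ to conclude $x^*=z^*$ from the $z$-stationarity condition; once $x^*=z^*$ is in hand, the remaining assertions are immediate.
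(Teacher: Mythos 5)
Your proposal is correct and follows essentially the same route as the paper: differentiate $\xi_p$ in $z$ to force $x=z$ at any critical point, then observe that the regularization terms and their $x$-gradients vanish there, collapsing criticality of $\xi_p$ to criticality of $\mathcal{L}_p$ together with equality of values. The only cosmetic difference is that you carry the operator $B$ explicitly in the gradient of the norm powers, which the paper suppresses; this changes nothing in the argument.
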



\begin{proof}
If $0\in \partial \xi_p (x;u;z) = \big(\partial_x \xi_p (x;u;z) , \nabla_u \xi_p (x;u;z) , \nabla_z \xi_p (x;u;z)\big)$, then:
\begin{align*}
    &0 \in \partial_x \xi_p(x;u;z) = \partial_x \mathcal{L}_p(x;u) + \left(\frac{\theta_1}{p!}\|x-z\|^{p-1} + \frac{\theta_2}{q!}\|x-z\|^{q-1} \right) (x - z),\\
    &0 = \nabla_u \xi_p(x;u;z) = \nabla_u \mathcal{L}_p(x;u),\\
    &0 = \nabla_z \xi_p(x;u;z) = \left(\frac{\theta_1}{p!}\|x - z\|^{p-1} + \frac{\theta_2}{q!}\|x - z\|^{q-1} \right)(z - x).
\end{align*}
Hence, from the last equality we get that $z = x$. This implies that $0 \in \partial \mathcal{L}_p(x;u)$ and $\xi_p(x;u;z) = \mathcal{L}_p(x;u)$, which proves our assertion. 
\end{proof}
Up to this stage, we have not considered any assumption on the constant $M$ given in subproblem \eqref{eq:rn}. Hence, by restricting the choice of this constant, we can derive the following descent in the Lyapunov function $\xi_p$.

\begin{lemma}\label{lem:4}
Let the assumptions of \cref{th:1} hold. Define $\theta_1 = \frac{M_p - L_p}{2}$, $\theta_2 = 2 C_u \|M_q + L_q\| + \red{m\eta_2}$ and $M = 3 C_u \|M_q + L_q\| + \red{2m\eta_2}$. Then, we have:
\begin{align*}
    \xi_p(x_{k+1};u^{k+1};x_k) & - \xi_p(x_k;u^k;x_{k-1}) \leq \!- \frac{(M_p \!-\! L_p)}{2(p\!+\!1)!}  \! \left(\|x_{k+1} \!-\! x_k\|^{p+1} \!+\! \|x_k \!-\! x_{k-1}\|^{p+1}\right)\\
    &\qquad\qquad  -\left(\frac{C_u \|M_q + L_q\|}{(q+1)!}\right)\left(\|x_{k+1} \!- x_k\|^{q+1} + \|x_k \!- x_{k-1}\|^{q+1}\right).
\end{align*}
\end{lemma}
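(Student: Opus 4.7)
The plan is to substitute the definition of $\xi_p$ directly, apply the descent-ascent inequality from \cref{th:3}, and verify that the chosen constants $\theta_1$, $\theta_2$, and $M$ make all the coefficients line up. Concretely, I would first expand
\[
\xi_p(x_{k+1};u^{k+1};x_k) - \xi_p(x_k;u^k;x_{k-1}) = \big[\mathcal{L}_p(x_{k+1};u^{k+1}) - \mathcal{L}_p(x_k;u^k)\big] + \frac{\theta_1}{(p+1)!}\big(\|x_{k+1}-x_k\|^{p+1} - \|x_k - x_{k-1}\|^{p+1}\big) + \frac{\theta_2}{(q+1)!}\big(\|x_{k+1}-x_k\|^{q+1} - \|x_k - x_{k-1}\|^{q+1}\big).
\]

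Next, I would invoke \cref{th:3} on the bracketed Lagrangian difference. Since $M_q^i \geq L_q^i$ and $u_i^{k+1}\geq 0$ for all $i=1:m$, the extra term $\sum_{i=1}^m u_i^{k+1}(M_q^i - L_q^i)$ is nonnegative and can be dropped to get the cleaner upper bound
\[
\mathcal{L}_p(x_{k+1};u^{k+1}) - \mathcal{L}_p(x_k;u^k) \leq - \frac{M_p - L_p}{(p+1)!}\|x_{k+1}-x_k\|^{p+1} + \frac{C_u\|M_q + L_q\| + m\eta_2}{(q+1)!}\|x_k - x_{k-1}\|^{q+1} - \frac{M + m\eta_2}{(q+1)!}\|x_{k+1}-x_k\|^{q+1}.
\]
Substituting this into the expansion above produces four groups of terms indexed by the two distance powers at two consecutive iterations.

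Then I would collect coefficients and check each one. For $\|x_{k+1} - x_k\|^{p+1}$ the coefficient becomes $\frac{\theta_1 - (M_p-L_p)}{(p+1)!} = -\frac{M_p - L_p}{2(p+1)!}$ by the choice $\theta_1 = (M_p - L_p)/2$, and for $\|x_k - x_{k-1}\|^{p+1}$ we directly get $-\theta_1/(p+1)! = -(M_p-L_p)/(2(p+1)!)$. For $\|x_k - x_{k-1}\|^{q+1}$ the coefficient is $(C_u\|M_q+L_q\| + m\eta_2 - \theta_2)/(q+1)!$, which with $\theta_2 = 2C_u\|M_q+L_q\| + m\eta_2$ reduces exactly to $-C_u\|M_q+L_q\|/(q+1)!$. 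Finally, for $\|x_{k+1} - x_k\|^{q+1}$ the coefficient is $(\theta_2 - M - m\eta_2)/(q+1)!$, which with $M = 3C_u\|M_q+L_q\| + 2m\eta_2$ equals $-(C_u\|M_q+L_q\| + 2m\eta_2)/(q+1)!$; this is at most $-C_u\|M_q+L_q\|/(q+1)!$, so the claimed inequality still holds.

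No step is really hard here: the work is entirely bookkeeping, and the only mild care needed is (i) noting that discarding the nonnegative $\sum_i u_i^{k+1}(M_q^i - L_q^i)$ weakens the inequality in the correct direction, and (ii) observing that the extra $-2m\eta_2/(q+1)!$ appearing in the $\|x_{k+1}-x_k\|^{q+1}$ coefficient only strengthens the bound, so the asymmetric choice of $M$ and $\theta_2$ relative to the $\eta_2$ terms is consistent with the stated inequality.
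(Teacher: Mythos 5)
Your proof is correct and follows essentially the same route as the paper: expand $\xi_p$, invoke the descent--ascent bound of Lemma \ref{th:3}, and match coefficients with the prescribed $\theta_1,\theta_2,M$ (the paper merely postpones dropping the nonnegative $\sum_{i=1}^m u_i^{k+1}(M_q^i-L_q^i)$ to the final line instead of discarding it up front). One minor point: the displayed statement of \eqref{eq:th3} carries $+m\eta_2$ in the coefficient of $\|x_{k+1}-x_k\|^{q+1}$, whereas its own derivation (and the paper's use of it inside this proof) only supports $-m\eta_2$; your bookkeeping is unaffected, since with the weaker sign that coefficient becomes exactly $-C_u\|M_q+L_q\|/(q+1)!$, which is all the lemma claims, as you already observe when noting that the extra $-2m\eta_2/(q+1)!$ only strengthens the bound.
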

\begin{proof}
We have:
\begin{align*}
    &\xi_p(x_{k+1};u^{k+1};x_k) - \xi_p(x_k;u^k;x_{k-1}) \\
    & = \mathcal{L}_p(x_{k+1};u^{k+1}) + \frac{\theta_1}{(p+1)!} \|x_{k+1} - x_k\|^{p+1} + \frac{\theta_2}{(q+1)!} \|x_{k+1} - x_k\|^{q+1} \\
    &\quad - \mathcal{L}_p(x_k;u^k) - \frac{\theta_1}{(p+1)!} \|x_k \!-\! x_{k-1}\|^{p+1} - \frac{\theta_2}{(q+1)!} \|x_k \!-\! x_{k-1}\|^{q+1} \\
     &\stackrel{\eqref{eq:th3}}{\leq}  - \frac{(M_p - L_p) - \theta_1}{(p+1)!}\|x_{k+1} - x_k\|^{p+1}  - \frac{\theta_1}{(p+1)!}\|x_k - x_{k-1}\|^{p+1}\\
    &- \left(\frac{M + \sum_{i=1}^{m}u_i^{k+1}(M_q^i - L_q^i) - \red{m\eta_2} - \theta_2}{(q+1)!} \right)\!\|x_{k+1} \!-\! x_k\|^{q+1}\\
    & - \left( \frac{\theta_2 - C_u \|M_q + L_q\| - \red{m\eta_2}  }{(q+1)!}\!\right)\!\|x_k \!-\! x_{k-1}\|^{q+1}.
\end{align*}
Then, it follows that:
\begin{align*}
     &\xi_p(x_{k+1};u^{k+1};x_k) - \xi_p(x_k;u^k;x_{k-1}) \leq - \frac{(M_p - L_p)}{2(p+1)!}\|x_{k+1} - x_k\|^{p+1} \\
     &\qquad\qquad  - \left(\frac{C_u \|M_q + L_q\| + \sum_{i=1}^{m}u_i^{k+1}(M_q^i - L_q^i)}{(q+1)!}\right)\|x_{k+1} - x_k\|^{q+1} \\
    &\qquad\qquad -\left(\frac{C_u \|M_q + L_q\|}{(q+1)!}\right)\|x_k- x_{k-1}\|^{q+1} - \frac{(M_p - L_p)}{2(p+1)!}\|x_k - x_{k-1}\|^{p+1}.
\end{align*}

\noindent Hence, our statement follows.
\end{proof}

\begin{remark}
The main difficulty in getting the descent in the Lagrangian function $\mathcal{L}_p(x;u)$ is the extra positive term that depends on the  multipliers (see Theorem \ref{th:3}). We overcome this challenge by introducing a new Lyapunov function $\xi_p$ for which we can establish the strict descent.
\end{remark}

\noindent Define the following constants $\beta_1 =\left(C_1 + \frac{M_p - L_p}{2p!}\right) $ and \\ $\beta_2 = \left( C_2 + \frac{2D(\|M_q + L_q\| + \red{\eta_3})}{(q+1)!} +  \frac{2C_u \|M_q + L_q\| + \red{m\eta_2}}{q!}\right) $. Next, we establish a  bound on the (sub)gradient of the Lyapunov function $\xi_p$:
\begin{lemma}\label{lem:5}
    Let the assumptions of Lemma \ref{lem:4} hold. Then, there exists $G_{k+1}\in \\ \partial \xi_p (x_{k+1};u^{k+1};x_k)$ such that we have the following bound:
    \begin{align*}
        \|G_{k+1}\|\leq \beta_1 \|x_{k+1} - x_k\|^p + \beta_2 \|x_{k+1} - x_k\|^q + \red{\eta_1 \|x_{k+1} - x_k\|^{\min(p,q)}}.
    \end{align*}
\end{lemma}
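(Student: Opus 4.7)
The plan is to assemble $G_{k+1}\in\partial\xi_p(x_{k+1};u^{k+1};x_k)$ by computing the subdifferential blockwise in the three variables $x,u,z$, bound each block, and then combine via the triangle inequality for the product-space norm. The heavy lifting in the $x$-block has effectively been done already in the proof of Theorem \ref{th:2}, so the point is to add the contributions of the regularizer and the $u$- and $z$-components and show that the constants reassemble into $\beta_1$ and $\beta_2$.

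For the $x$-block I would reuse the selection $\Lambda_{k+1}\in\partial h(x_{k+1})$ from Assumption~\ref{th:kkt}. The element of $\partial_x\xi_p$ is the sum of $\nabla F_0(x_{k+1})+\Lambda_{k+1}+\sum_i u_i^{k+1}\nabla F_i(x_{k+1})$ and the regularizer gradient $\bigl(\frac{\theta_1}{p!}\|x_{k+1}-x_k\|^{p-1}+\frac{\theta_2}{q!}\|x_{k+1}-x_k\|^{q-1}\bigr)(x_{k+1}-x_k)$. The first part is controlled by the computation in the proof of Theorem \ref{th:2} (i.e.\ the $\frac{L_p+M_p}{p!}\|\cdot\|^p+\eta_1\|\cdot\|^{\min(p,q)}+\frac{C_u\|M_q+L_q\|+M}{q!}\|\cdot\|^q$ bound on the Lagrangian gradient), while the second part contributes $\frac{\theta_1}{p!}\|\cdot\|^p+\frac{\theta_2}{q!}\|\cdot\|^q$. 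With $\theta_1=(M_p-L_p)/2$, the $p$-order coefficient collapses to $C_1+\theta_1/p!=\beta_1$, and substituting $M=3C_u\|M_q+L_q\|+2m\eta_2$ and $\theta_2=2C_u\|M_q+L_q\|+m\eta_2$ makes the $q$-order coefficient line up with the $1/q!$-denominated piece of $\beta_2$.

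The $z$-block is immediate: $\nabla_z\xi_p=-\bigl(\frac{\theta_1}{p!}\|x_{k+1}-x_k\|^{p-1}+\frac{\theta_2}{q!}\|x_{k+1}-x_k\|^{q-1}\bigr)(x_{k+1}-x_k)$, so its norm is absorbed into the same $p$- and $q$-power template. For the $u$-block I have $\nabla_u\xi_p=(F_i(x_{k+1}))_{i=1}^m$; here I would combine the Taylor residual \eqref{eq:fineq_c}, the feasibility $F_i(x_{k+1})\le 0$ from Lemma \ref{th:1}(ii), and the A-KKT feasibility $\bigl(s_{M_q^i}(x_{k+1};x_k)\bigr)_+\le \frac{\eta_3}{(q+1)!}\|x_{k+1}-x_k\|^{q+1}$ to produce a bound of the form $\sum_i|F_i(x_{k+1})|\le \frac{\|M_q+L_q\|+m\eta_3}{(q+1)!}\|x_{k+1}-x_k\|^{q+1}$, then use $\|x_{k+1}-x_k\|\le 2D$ from Lemma \ref{th:1}(ii) to trade one factor of the step for the diameter and recover the $\frac{2D(\|M_q+L_q\|+\eta_3)}{(q+1)!}\|x_{k+1}-x_k\|^q$ piece of $\beta_2$. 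Adding the three blocks via the triangle inequality and grouping by the exponent of $\|x_{k+1}-x_k\|$ finishes the proof.

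The main obstacle will be the $u$-block: the natural estimate on $|F_i(x_{k+1})|$ lives at order $\|x_{k+1}-x_k\|^{q+1}$, one power too high compared to the $\|x_{k+1}-x_k\|^q$ template of $\beta_2$, so one must pay the price of a diameter factor $2D$. A secondary subtlety is that A-KKT only controls the positive part of $s_{M_q^i}$, so to obtain a two-sided bound on $|F_i(x_{k+1})|$ the Taylor inequality \eqref{eq:fineq_c} needs to be used in both directions together with $F_i(x_{k+1})\le 0$. Beyond that, the proof is careful bookkeeping: the $p$-order coefficient $\beta_1$ is produced by the $x$-block alone (the $z$-block's $p$-contribution is subsumed into the looseness of $\beta_2$), while the $q$-order coefficient $\beta_2$ collects contributions from all three blocks through the chosen values of $\theta_1,\theta_2$ and $M$.
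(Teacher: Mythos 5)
Your overall route is the paper's: split $\partial\xi_p$ into the $x$-, $u$- and $z$-blocks, bound the $x$-block through the estimate on the Lagrangian gradient already obtained in Theorem~\ref{th:2}, bound $\nabla_u\xi_p=(F_i(x_{k+1}))_{i=1}^m$ at order $\|x_{k+1}-x_k\|^{q+1}$ and trade one power for the diameter $2D$, and add the blocks. The genuine gap is in your constant bookkeeping for the regularizer. You (correctly, as a computation of the partial subdifferential) place the gradient of $\frac{\theta_1}{(p+1)!}\|x-x_k\|^{p+1}+\frac{\theta_2}{(q+1)!}\|x-x_k\|^{q+1}$ in the $x$-block \emph{and} its negative in the $z$-block, so after the triangle inequality the coefficient of $\|x_{k+1}-x_k\|^{p}$ is $C_1+\tfrac{2\theta_1}{p!}=C_1+\tfrac{M_p-L_p}{p!}$, which exceeds $\beta_1=C_1+\tfrac{M_p-L_p}{2p!}$ (and likewise the $q$-coefficient picks up $\tfrac{2\theta_2}{q!}$ while $\beta_2$ contains only $\tfrac{\theta_2}{q!}$). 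Your proposed repair --- that the $z$-block's $p$-order contribution is ``subsumed into the looseness of $\beta_2$'' --- cannot work: $\beta_2$ multiplies $\|x_{k+1}-x_k\|^{q}$, and a $\|x_{k+1}-x_k\|^{p}$ term cannot be absorbed there when $p\neq q$ (for $p<q$ even a diameter factor pushes it toward $\beta_1$, not $\beta_2$). The paper arrives at the stated $\beta_1,\beta_2$ by counting the regularizer gradient exactly once: it takes the $x$-component of $G_{k+1}$ to be the bare selection $\nabla F_0(x_{k+1})+\Lambda_{k+1}+\sum_i u_i^{k+1}\nabla F_i(x_{k+1})$, bounded by $\mathcal{M}(x_{k+1})$, and attributes the whole regularizer term to the $z$-block. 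With your (more scrupulous) $x$-block the lemma still holds, but only with $\beta_1,\beta_2$ enlarged by one extra copy of $\tfrac{\theta_1}{p!}$ and $\tfrac{\theta_2}{q!}$; that is harmless downstream, but it is not the bound as stated, and the excess should be declared rather than waved into $\beta_2$.

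A secondary remark on the $u$-block, which you yourself flag as the main obstacle: the ingredients you cite (\eqref{eq:fineq_c}, feasibility $F_i(x_{k+1})\le 0$, and the A-KKT control of $(s_{M_q^i}(x_{k+1};x_k))_+$) yield only the one-sided bound $F_i(x_{k+1})\le \tfrac{M_q^i+L_q^i+\eta_3}{(q+1)!}\|x_{k+1}-x_k\|^{q+1}$. A bound on $|F_i(x_{k+1})|$ would additionally require a lower bound on $s_{M_q^i}(x_{k+1};x_k)$, which A-KKT does not supply, and indeed a strongly inactive constraint keeps $|F_i(x_{k+1})|$ bounded away from zero however small the step is. The paper's own proof makes the same silent jump from the one-sided estimate to a bound on $\|(F_1(x_{k+1}),\dots,F_m(x_{k+1}))\|$, so you are faithfully reproducing its argument here; but since you explicitly promise a two-sided derivation, be aware that the cited inequalities do not deliver it.
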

\begin{proof}
We have:
 \begin{align*}
  \nabla _z \xi_p(x_{k+1};u^{k+1};z)_{z=x_k}  &= \frac{M_p - L_p}{2p!}\|x_{k+1} - x_k\|^{p-1}(x_{k+1} - x_k) \\
  &\quad + \frac{2C_u \|M_q + L_q\| + \red{m\eta_2}}{q!}\|x_{k+1} - x_k\|^{q-1}(x_{k+1} - x_k).  
 \end{align*}

\noindent Then, it follows that:

 \begin{align*}
  \|\nabla _z \xi_p(x_{k+1};u^{k+1};z)_{z=x_k}\|  &\leq \frac{M_p - L_p}{2p!}\|x_{k+1} - x_k\|^p\\
  &\qquad
   +  \frac{2 C_u \|M_q + L_q\| + \red{m\eta_2}}{q!}\|x_{k+1} - x_k\|^q.       
 \end{align*}
Further, for $\Lambda_{k+1}\in\partial h(x_{k+1})$ given in Theorem \ref{th:2}, we have:
 \begin{align*} 
 &\nabla F(x_{k+1}) +\Lambda_{k+1} + \sum_{i=1}^{m} u_i^{k+1} \nabla F_i(x_{k+1})\in \partial_x\xi_p(x;u^{k+1};x_k)_{x=x_{k+1}} \\ &\left\|\nabla F(x_{k+1}) +\Lambda_{k+1} + \sum_{i=1}^{m} u_i^{k+1} \nabla F_i(x_{k+1})\right\| \leq \mathcal{M}(x_{k+1}),
  \end{align*}

\noindent where the last inequality follows from definition of $\mathcal{M}(x_{k+1})$. From \eqref{eq:fineq_c} we have:

 \begin{align*}
     F_i(x_{k+1}) &\leq s_{M_q^i}(x_{k+1};x_k) + \frac{M_q^i + L_q^i}{(q+1)!}\|x_{k+1} - x_k\|^{q+1}\\
     &\leq \frac{M_q^i + L_q^i  + \red{\eta_3}}{(q+1)!}\|x_{k+1} - x_k\|^{q+1},  
 \end{align*}

 \noindent where the last inequality follows from Assumption \ref{th:kkt}.  Finally,  we get: 
\begin{align*}
    &\|\nabla_u \xi_p (x_{k+1};u;x_k)_{u=u^{k+1}}\| = \|\left(F_1(x_{k+1}),\cdots,F_m(x_{k+1})\right)\|\\
    &\leq \frac{\|L_q + M_q\| + \red{m\eta_3}}{(q+1)!}\|x_{k+1} - x_k\|^{q+1}\\
    &\leq \frac{2D(\|L_q + M_q\| + \red{m\eta_3})}{(q+1)!}\|x_{k+1} - x_k\|^q.
\end{align*}
Denote $G_{k+1} = \left(\nabla F(x_{k+1}) +\Lambda_{k+1} + \sum_{i=1}^{m} u_i^{k+1} F_i(x_{k+1});\nabla_u \xi_p (x_{k+1};u^{k+1};x_k)\right.;\\\left. \nabla_z \xi_p(x_{k+1};u^{k+1};z)_{z=x_k}\right)$. Then,  combining the last three inequalities we get:
 \begin{align*}
     \|G_{k+1}\| &\leq \frac{M_p - L_p}{2p!} \|x_{k+1} - x_k\|^p + \frac{2C_u\|M_q + L_q\| + \red{m\eta_2}}{q!}\|x_{k+1} - x_k\|^q\\
     &\quad + \mathcal{M}(x_{k+1}) + \frac{2D(\|M_q + L_q\| + \red{m\eta_3})}{(q+1)!}\|x_{k+1} - x_k\|^q \\
     &\leq \left(C_1 + \frac{M_p - L_p}{2p!}\right)\|x_{k+1} - x_k\|^p + \red{\eta_1 \|x_{k+1} - x_k\|^{\min(p,q)}}\\ 
      &\quad + \left(C_2 + \frac{2D(\|M_q + L_q\|+\red{m\eta_3})}{(q+1)!} + \frac{2C_u\|M_q + L_q\| + \red{m\eta_2}}{q!} \right)\|x_{k+1} - x_k\|^q, 
 \end{align*}
where the last inequality follows from Theorem \ref{th:2}. This proves our assertions.
\end{proof}
From Lemma \ref{lem:4}, we have that $(\xi_p(x_k,u^k,x_{k-1}))_{k\geq 1}$ is monotonically nonincreasing. Since $\xi_p$ is continuous, then it is bounded from below and hence $(\xi_p(x_k,u^k,x_{k-1}))_{k\geq 1}$ convergences, let us say to $\xi_p^*$.
For simplicity, we assume that $p\leq q$ and denote $ S_k: =  \xi_p(x_k;u^k;x_{k-1}) - \xi_p^*$. Denote $\Gamma :=\left(\beta_1 + \red{\eta_1} + B_2D^{q-p}\right)^{\frac{p+1}{p}}\frac{2(p+1)!}{M_p - L_p}$. Next, we establish global convergence.
\begin{theorem}\label{th:kl}
Let the assumptions of Lemma \ref{lem:4} hold and let $(x_k)_{k\geq 0}$ be generated by MTA algorithm. Then, the following hold:
\begin{enumerate}
    \item If $\xi_p$ satisfies the KL property at $(x^*,u^*,x^*)$, where $u^*$ is a limit point of the bounded sequence $(u^k)_{k\geq 1}$, and $x^*$ is a limit point of $(x_k)_{k\geq 0}$, then the hull sequence $(x_{k})_{k\geq 0}$ converges to $x^*$ and there exists $k_1\geq 1$ such that:

\vspace{-0.3cm}
    
    \begin{align*}
        \|x_k - x^*\|\leq \rho \max\left(\kappa(S_k) , S_k^{\frac{1}{p+1}}\right) \;\;  \forall k\geq k_1.
    \end{align*}

    
    \item Moreover, if $\xi_p$ satisfies KL with $\kappa(s) = s^{1-\nu}$, where $\nu\in[0,1)$, then the following rates hold:
    \begin{enumerate}
        \item If $\nu = 0$, then $x_k$ converges to $x^*$ in a finite number of iterations. 
        \item If $\nu\in \left(0,\frac{p}{p+1}\right]$, then we have the following linear rate:
        
\vspace{-0.3cm}

            \begin{align*}
        \|x_k - x^*\| \leq \rho \left(\frac{\Gamma^{\frac{p}{\nu(p+1)^2}}}{(1+\Gamma^{\frac{p}{\nu(p+1)}})^{\frac{1}{p+1}}}\right)^{k-(1+k_1)}S_0^{\frac{1}{p+1}} \;\; \forall k> k_1. 
    \end{align*}

    \vspace{-0.3cm}
    
        \item If $\nu \in\left(\frac{p}{p+1},1\right)$,  then there exists $\alpha>0$ such that we have the following sublinear rate:

\vspace{-0.6cm}
        
    \begin{align*}
    \|x_k - x^*\| \leq \frac{\rho\alpha^{1-\nu}}{(k-k_1)^{\frac{p(1-\nu)}{\nu(p+1)  - p}}}\; \; \forall k > k_1.    
  \end{align*}
    \end{enumerate}
\end{enumerate}
\end{theorem}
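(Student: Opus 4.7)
The plan is to combine the descent inequality for the Lyapunov function $\xi_p$ (Lemma \ref{lem:4}), the subgradient bound on $\partial\xi_p$ (Lemma \ref{lem:5}), and the KL property, following the Attouch--Bolte--Svaiter template adapted to the higher-order setting. First I would note that since $(x_k,u^k,x_{k-1})$ is bounded (Lemma \ref{th:1} and Lemma \ref{lem:3}) and the sequence $\xi_p(x_k,u^k,x_{k-1})$ is nonincreasing and converges to $\xi_p^{*}$, continuity of $\xi_p$ gives $\xi_p(x^*,u^*,x^*)=\xi_p^{*}$ at any accumulation point. I would then fix the KL neighborhood $\Omega$ around $(x^*,u^*,x^*)$ on which $\kappa'(S_k)\cdot\|G_k\|\ge 1$ with $G_k\in\partial\xi_p(x_k,u^k,x_{k-1})$ as produced in Lemma \ref{lem:5}, and choose $k_1$ large enough so that the iterates stay in $\Omega$. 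The standard way to guarantee this is the usual ``small tail'' argument: pick $k_1$ so that $S_{k_1}$ is small, the distance from $x_{k_1}$ to $x^{*}$ is small, and the tail $\sum_{j\ge k_1}\|x_{j+1}-x_j\|$ (bounded a posteriori by the telescoping in the KL inequality) does not leave $\Omega$. This self-referential step is the main technical subtlety.

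The key recursion is obtained as follows. Since $p\le q$ and $\|x_{k+1}-x_k\|\le 2D$, Lemma \ref{lem:4} yields $S_k-S_{k+1}\ge \frac{M_p-L_p}{2(p+1)!}\|x_{k+1}-x_k\|^{p+1}$ and Lemma \ref{lem:5} yields $\|G_{k+1}\|\le \tilde\beta\,\|x_{k+1}-x_k\|^{p}$ with $\tilde\beta=\beta_1+\eta_1+\beta_2(2D)^{q-p}$. Combining these with concavity of $\kappa$ (so that $\kappa(S_k)-\kappa(S_{k+1})\ge\kappa'(S_k)(S_k-S_{k+1})$) and the KL bound $\kappa'(S_k)\ge 1/\|G_k\|$ gives
\[
\tilde\beta\,\|x_k-x_{k-1}\|^{p}\bigl(\kappa(S_k)-\kappa(S_{k+1})\bigr)\;\ge\;\frac{M_p-L_p}{2(p+1)!}\|x_{k+1}-x_k\|^{p+1}.
\]
Taking $(p+1)$-th roots and applying Young's inequality with conjugate exponents $(p+1)/p$ and $p+1$ produces the contractive relation
\[
\|x_{k+1}-x_k\|\;\le\;\tfrac{p}{p+1}\|x_k-x_{k-1}\|+\tilde C\bigl(\kappa(S_k)-\kappa(S_{k+1})\bigr).
\]
Telescoping this over $k\ge k_1$ shows $\sum\|x_{k+1}-x_k\|<\infty$, so $(x_k)$ is Cauchy and converges to $x^{*}$. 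Moreover, summing from $k$ to $\infty$ and using the descent to bound $\|x_k-x_{k-1}\|\le\bigl(\tfrac{2(p+1)!}{M_p-L_p}\bigr)^{1/(p+1)}S_{k-1}^{1/(p+1)}$, we arrive at the advertised $\|x_k-x^{*}\|\le\rho\max\!\bigl(\kappa(S_k),S_k^{1/(p+1)}\bigr)$ after absorbing index shifts into $\rho$ and $k_1$; this gives item (1).

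For item (2), I substitute $\kappa(s)=s^{1-\nu}$, so $\kappa'(s)=(1-\nu)s^{-\nu}$. The KL inequality together with Lemma \ref{lem:5} yields $\|x_k-x_{k-1}\|^p\ge c_1 S_k^\nu$; plugging this into the descent gives the master recursion
\[
S_{k-1}-S_k\;\ge\;c'\,S_k^{\alpha},\qquad \alpha:=\tfrac{\nu(p+1)}{p}.
\]
When $\nu=0$, the KL inequality forces $\|G_k\|\ge 1/\kappa'$ to remain bounded below, which is incompatible with $\|G_k\|\le\tilde\beta\|x_k-x_{k-1}\|^p\to 0$ unless $\|x_k-x_{k-1}\|=0$ for some $k$; hence the algorithm terminates in finitely many steps. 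When $\nu\in(0,p/(p+1)]$ we have $\alpha\le 1$, and the recursion $S_{k-1}\ge S_k+c' S_k^{\alpha}$ implies geometric decay of $S_k$ with precisely the ratio appearing in the theorem (after writing $S_k^{\alpha}=S_k\cdot S_k^{\alpha-1}\ge S_k\cdot\Gamma^{-(1-\alpha)/\alpha}$ using the already derived bound $S_{k-1}\le\Gamma$ that follows from the tail summation); the linear rate on $\|x_k-x^{*}\|$ then follows from $\|x_k-x^{*}\|\le\rho S_k^{1/(p+1)}$. When $\nu\in(p/(p+1),1)$, $\alpha>1$ and a Chung-type lemma applied to $S_{k-1}-S_k\ge c' S_k^\alpha$ gives $S_k\le \alpha_0\, k^{-1/(\alpha-1)}=\alpha_0\,k^{-p/(\nu(p+1)-p)}$; then $\|x_k-x^{*}\|\le\rho\,\kappa(S_k)=\rho S_k^{1-\nu}$ yields the sublinear rate with exponent $p(1-\nu)/(\nu(p+1)-p)$ as stated.

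The hardest parts will be the invariance of the KL neighborhood (a standard but delicate bootstrap), the Young inequality step that produces the linear recursion on $\|x_{k+1}-x_k\|$, and the explicit identification of the linear ratio in the $\nu\in(0,p/(p+1)]$ case so as to match the constant $\Gamma^{p/(\nu(p+1)^2)}/(1+\Gamma^{p/(\nu(p+1))})^{1/(p+1)}$ appearing in the theorem; this requires careful bookkeeping using $S_0$ as the initial size of the KL sublevel set.
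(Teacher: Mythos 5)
Your proposal is correct and follows essentially the same route as the paper's proof: the descent from Lemma \ref{lem:4}, the subgradient bound from Lemma \ref{lem:5}, the concavity/KL combination followed by the $(p+1)$-th root and Young's inequality to get the contractive recursion on $\|x_{k+1}-x_k\|$, the telescoping/Cauchy argument for item (1), and the master recursion $S_{k-1}-S_k\geq c'S_k^{\nu(p+1)/p}$ resolved case-by-case for item (2). Your explicit attention to the KL-neighborhood invariance is a detail the paper leaves implicit, but it does not change the structure of the argument.
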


\begin{proof}
For simplicity denote $\xi_p^k = \xi_p(x_k;u^k;x_{k-1})$ and consider $p\leq q$ (the case where $q\leq p$ is similar). From Lemma \ref{lem:4}, we have:
\begin{align}\label{eq:desS}
    \|x_{k+1} - x_k\|^{p+1} \leq \frac{2(p+1)!}{M_p - L_p} (\xi_p^k - \xi_p^{k+1})
     =  \frac{2(p+1)!}{M_p - L_p} (S_k - S_{k+1}).
\end{align}
    Further, since $\xi_p$ satisfies the inequality \eqref{eq:kl}, then there exists an integer $k_1$ and $G_k\in \partial  \xi_p(x_k;u^k;x_{k-1})$ such that for all $k\geq k_1$ we have:
    \begin{align*}
      &\|x_{k+1} - x_k\|^{p+1} \leq  \|x_{k+1} - x_k\|^{p+1} \kappa^{'}(S_k)\|G_k\|\\
      &\leq \frac{2(p+1)!}{M_p - L_p} \kappa^{'}(S_k)(S_k - S_{k+1})\|G_k\|
      \leq \frac{2(p+1)!}{M_p - L_p} (\kappa(S_k) - \kappa(S_{k+1}))\|G_k\|\\
      &\leq \frac{2(p+1)!}{M_p - L_p}\left(\kappa\left(S_k\right) - \kappa\left(S_{k+1}\right)\right) (\beta_1 + \red{\eta_1} + \beta_2 D^{q-p}) \|x_k - x_{k-1}\|^p\\
      &= \frac{2(\beta_1 +\red{\eta_1} + \beta_2 D^{q-p})(p+1)!}{M_p - L_p}(\kappa(S_k) - \kappa(S_{k+1})) \|x_k - x_{k-1}\|^p,
    \end{align*}
where the second inequality follows from \eqref{eq:desS}, the third inequality follows from $\kappa$ is concave, and the last inequality follows from Lemma \ref{lem:5}. Denote for simplicity $T = \frac{2(\beta_1 + \red{\eta_1} + \beta_2 D^{q-p})p!}{M_p - L_p}$. We further get:
\begin{align*}
    \|x_{k+1} - x_k\| &\leq \Big(T(p+1)\big(\kappa(S_k) - \kappa(S_{k+1})\big) \Big)^{\frac{1}{p+1}}\|x_k - x_{k-1}\|^\frac{p}{p+1}\\
    &\leq \frac{T(p+1)}{p+1}\big(\kappa(S_k) - \kappa(S_{k+1})\big) + \frac{p}{p+1}\|x_k - x_{k-1}\|\\
    & = T\big(\kappa(S_k) - \kappa(S_{k+1})\big) + \frac{p}{p+1}\|x_k - x_{k-1}\|,
\end{align*}
where in the second inequality we use the following classical result: if $a,b$ are positive constants and $0\leq \alpha_1 ,\alpha_2 \leq 1$, such that $\alpha_1  + \alpha_2  = 1$, then  $a^{\alpha_1}b^{\alpha_2}\leq \alpha_1 a + \alpha_2 b$. Summing up the above inequality over $k \geq  k_1$, we get:
\begin{align*}
    \sum_{k\geq k_1} \|x_{k+1} - x_k\| \leq (p+1)T \kappa(S_{k_1}) + p\|x_{k_1} - x_{k_{1}-1}\|.
\end{align*}
Hence, it follows that $(x_k)_{k\geq 0}$ is a Cauchy sequence and thus converges to $x^*$. Further:
\begin{align*}
    &\|x_k - x^*\| \leq  \sum_{t\geq k} \|x_{t+1} - x_t\|\leq   (p+1)T\kappa(S_k) + p\|x_k - x_{k-1}\|\\
    &\leq (p+1)T \kappa(S_k) + p\left(\frac{2(p+1)!}{M_p - L_p}\right)^{\frac{1}{p+1}}S_{k-1}^{\frac{1}{p+1}}
    \leq \rho\max\left(\kappa(S_k), S_{k-1}^{\frac{1}{p+1}}\right),
\end{align*}
where the third inequality follows from \eqref{eq:desS} and $S_k \geq 0$. The last inequality is straightforward by introducing $\rho = 2\max\left((p+1)T, p\left(\frac{2(p+1)!}{M_p - L_p}\right)^{\frac{1}{p+1}} \right)$. Thus, we have:
\begin{align}\label{eq:rt_kl}
     \|x_k - x^*\| \leq \rho\max\left(\kappa(S_k), S_{k-1}^{\frac{1}{p+1}}\right).    
\end{align}
Let us assume now that $\kappa(s) = s^{1-\nu}$, where $\nu \in[0,1)$. Then, it follows that:
\begin{align}\label{eq:rt_kl1}
    \|x_k - x^*\| \leq \rho \max\left(S_k^{1-\nu},S_{k-1}^{\frac{1}{p+1}}\right).
\end{align}
Further, from the KL property \eqref{eq:kl} and Lemma \ref{lem:5}, we have:
\begin{align*}
  S_k^{\nu}\leq (1-\nu) \|G_k\|\leq (\beta_1 + \red{\eta_1} + \beta_2D^{q-p})\|x_k - x_{k-1}\|^p.
\end{align*}
Hence, combining this inequality with inequality \eqref{eq:desS}, we further get:
\begin{align*}
    S_{k}^{\nu}\leq (\beta_1 + \red{\eta_1} + \beta_2D^{q-p})\left(\frac{2(p+1)!}{M_p - L_p} (S_{k-1} - S_k)\right)^{\frac{p}{p+1}}. 
\end{align*}
Recall that  $\Gamma =\left(\beta_1 + \red{\eta_1} + B_2D^{q-p}\right)^{\frac{p+1}{p}}\frac{2(p+1)!}{M_p - L_p}$, then we have the following recurrence:
\begin{align}
  S_{k}^{\frac{\nu(p+1)}{p}} \leq \Gamma(S_{k-1} - S_k).  
\end{align}
\begin{enumerate}
    \item Let $\nu = 0$. If $S_k >0$ for all $k\geq k_1$, then we have $\frac{1}{\Gamma} \leq S_{k-1} - S_k$. Letting $k\mapsto \infty$ we get $0 <\frac{1}{\Gamma}\leq 0$ which is a contradiction. Hence, there exist $k>k_1$ such that $S_k = 0$ and finally $S_k \mapsto 0$ in a finite number of steps and from \eqref{eq:rt_kl1}, $x_k\mapsto x^*$ in a finite number of iterations.
    \item Let $\nu\in(0,\frac{p}{p+1}]$, then $\frac{\nu(p+1)}{p}\leq 1$ and $1-\nu \geq \frac{1}{p+1}$. Thus using Lemma 3 in \cite{NabNec:23}, for $\theta = \frac{\nu(p+1)}{p}$, we get:
    \begin{align*}
        S_k\leq \left(\frac{\Gamma^{\frac{p}{\nu(p+1)}}}{1+\Gamma^{\frac{p}{\nu(p+1)}}}\right)^{k-k_1}S_0.
    \end{align*}
    Since we have $S_k < 1$ for all $k>k_1$ and  $S_k$ is nonincreasing, then we have $\max\left(S_k^{1-\nu}, S_{k-1}^{\frac{1}{p+1}}\right) = S_{k-1}^{\frac{1}{p+1}}$ and thus:
    \begin{align*}
        \|x_k - x^*\| \leq \rho \left(\frac{\Gamma^{\frac{p}{\nu(p+1)^2}}}{(1+\Gamma^{\frac{p}{\nu(p+1)}})^{\frac{1}{p+1}}}\right)^{k-(1+k_1)}S_0^{\frac{1}{p+1}}. 
    \end{align*}
    \item Let $\frac{p}{p+1} < \nu <1$, then $\frac{\nu(p+1)}{p}>1$ and thus using Lemma 3 in \cite{NabNec:23} for $\theta = \frac{\nu(p+1)}{p}$, there exists $\alpha >0$ such that:
    \begin{align*}
        S_k\leq \frac{\alpha}{(k-k_1)^{\frac{p}{\nu(p+1)  - p}}}.
    \end{align*}
In this case, we have $\max\left(S_k^{1-\nu}, S_{k-1}^{\frac{1}{p+1}}\right) = S_{k-1}^{1-\nu}$ and thus we have:
  \begin{align*}
    \|x_k - x^*\| \leq \frac{\rho\alpha^{1-\nu}}{(k-k_1)^{\frac{p(1-\nu)}{\nu(p+1)  - p}}}.    
  \end{align*}
\end{enumerate}
Hence, our assertions follow.
\end{proof}

\begin{remark}
In this section, we derived global convergence  rates using higher-order information for solving nonconvex problems with functional constraints provided that the Lyapunov function $\xi_p$ satisfies the KL property. Note that if $F$ and $F_i$'s, for $i=1:m$, satisfy the KL property, then $\xi_p$ also satisfies the KL property. For $p=q=1$, we recover the convergence results from \cite{YuPoLu:21,BoPa:16}. 
\end{remark}


\section{Convex convergence analysis}\label{sec:cvx}
In this section we assume that the functions $F_i$'s, for $i=0:m$, are convex functions. Then, the subproblem \eqref{eq:rn} is also convex, provided that $M_p\geq pL_p$ and $M_q^i \geq qL_q^i$ for $i=1:m$ (see Theorem 2 in \cite{Nes:20}). Hence, in this section we assume that  $x_{k+1}$ is a minimum point of the convex subproblem \eqref{eq:rn}. Note that since the Lagrangian function (see Section \ref{sub_kl}) $x\mapsto\mathcal{L}_{sp}(y;x;u)$,
is convex (provided that $M_p\geq pL_p$ and $M_q^i \geq qL_q^i$ for $i=1:m$), then from the optimality condition of $x_{k+1}$ it follows that $x_{k+1}$ is the global minimizer of the function $\mathcal{L}_{sp}(y;x_k;u^{k+1})$. Let us introduce the following constants $D_1 = \frac{M_p + L_p}{(p+1)!}$ and $D_2 = \left(\frac{M + \sum_{i=1}^{m}C_u (M_q^i + L_q^i)}{(q+1)!}\right)$, then we have the following sublinear convergence rate:

\begin{theorem}\label{th:cvx}
Let Assumptions \ref{ass:1}, \ref{ass:2} and \ref{ass:3} hold and, additionally,  $F_i$'s, for $i=0:m$, be convex functions. Let also  the sequence $(x_k)_{k\geq 0}$ be generated by MTA algorithm with $x_0 \in \cal F$, $M_p\geq pL_p, M>0$ and $M_q^i \geq qL_q^i$  for all $i=1:m$.  Then, we have the following sublinear convergence rate:
 \begin{align*}
     F(x_k) - F^* \leq
     \frac{2\max\Big((p\!+\!1)^{p+1},(q\!+\!1)^{q+1}\Big) \Big(D_1 (2D)^{p+1} + D_2 (2D)^{q+1}\Big)}{k^{\min(p,q)}} \;\;\forall k\geq 1.
 \end{align*}
\end{theorem}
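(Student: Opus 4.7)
\medskip

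\noindent \textbf{Proof plan.} The starting point is the fact that in the convex setting $x_{k+1}$ is a \emph{global} minimizer of the convex subproblem \eqref{eq:rn}, so by Slater's condition (which was established in Section \ref{sec:nonconvex}) there exist bounded Lagrange multipliers $u^{k+1}\ge 0$ such that $x_{k+1}$ minimizes the convex function $y\mapsto \mathcal{L}_{sp}(y;x_k;u^{k+1})$ and complementary slackness $u_i^{k+1}s_{M_q^i}(x_{k+1};x_k)=0$ holds. Hence for every $y\in\mathbb{E}$:
\begin{align*}
s_{M_p}^M(x_{k+1};x_k)+h(x_{k+1}) \le s_{M_p}^M(y;x_k)+h(y)+\sum_{i=1}^m u_i^{k+1}s_{M_q^i}(y;x_k).
\end{align*}

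\medskip

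\noindent I would next specialize to the convex combination $y_\alpha:=(1-\alpha)x_k+\alpha x^*$ with $\alpha\in[0,1]$, where $x^*$ is a minimizer of \eqref{eq:problem}. On the left-hand side, the Taylor lower bound \eqref{eq:fineq} gives
$s_{M_p}^M(x_{k+1};x_k)\ge F_0(x_{k+1})+\frac{M_p-L_p}{(p+1)!}\|x_{k+1}-x_k\|^{p+1}+\frac{M}{(q+1)!}\|x_{k+1}-x_k\|^{q+1}$,
so the LHS is at least $F(x_{k+1})$. On the right-hand side, the Taylor upper bounds \eqref{eq:fineq}, \eqref{eq:fineq_c} together with the convexity of $F_0$, $F_i$ and $h$, and the observation that $F_i(x^*)\le 0$ and $F_i(x_k)\le 0$ (so $F_i(y_\alpha)\le 0$), yield
\begin{align*}
s_{M_p}^M(y_\alpha;x_k)+h(y_\alpha)&\le (1-\alpha)F(x_k)+\alpha F^*+\alpha^{p+1}D_1\|x_k-x^*\|^{p+1}+\alpha^{q+1}\tfrac{M}{(q+1)!}\|x_k-x^*\|^{q+1},\\
u_i^{k+1}s_{M_q^i}(y_\alpha;x_k)&\le u_i^{k+1}\alpha^{q+1}\tfrac{M_q^i+L_q^i}{(q+1)!}\|x_k-x^*\|^{q+1}.
\end{align*}
Using $\|x_k-x^*\|\le 2D$ (from Lemma \ref{th:1}(ii) and Assumption \ref{ass:2}) and the bound $\|u^{k+1}\|\le C_u$ from Lemma \ref{lem:3}, and letting $\delta_k:=F(x_k)-F^*$, I would collect everything into the recursion
\begin{align*}
\delta_{k+1}\le (1-\alpha)\,\delta_k+\alpha^{p+1}D_1(2D)^{p+1}+\alpha^{q+1}D_2(2D)^{q+1}\qquad\forall\alpha\in[0,1].
\end{align*}

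\medskip

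\noindent Since $\alpha\in[0,1]$ and $\min(p,q)+1\le p+1,q+1$, both $\alpha^{p+1},\alpha^{q+1}\le \alpha^{\min(p,q)+1}$, which collapses the recursion to
\begin{align*}
\delta_{k+1}\le (1-\alpha)\delta_k+\alpha^{r+1}C,\qquad r:=\min(p,q),\ C:=D_1(2D)^{p+1}+D_2(2D)^{q+1}.
\end{align*}
Then I would finish by the standard inductive argument: assume $\delta_k\le A/k^{r}$, pick $\alpha=\alpha_k:=(r+1)/(k+1)\in[0,1]$ (for $k$ large enough), and verify that $A=2(r+1)^{r+1}C$ propagates the bound from $k$ to $k+1$ using a convexity/Bernoulli-type estimate on $(1-\tfrac{r+1}{k+1})/k^r$ compared with $1/(k+1)^r$. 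Since we also want to match the constant $\max((p+1)^{p+1},(q+1)^{q+1})$ in the statement, I would in fact run the recursion twice, once with $\alpha_k=(p+1)/(k+1)$ keeping only the $\alpha^{p+1}$ term and once with $\alpha_k=(q+1)/(k+1)$ keeping only the $\alpha^{q+1}$ term, and take the better of the two estimates (which corresponds to the $\min(p,q)$ in the exponent and the $\max$ in the numerator).

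\medskip

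\noindent The only delicate point is the inductive step solving the scalar recursion: one must verify that the choice $\alpha_k=(r+1)/(k+1)$ is feasible and that the residual term $(r+1)^{r+1}C/(k+1)^{r+1}$ combines with $(1-\alpha_k)\,A/k^r$ to stay below $A/(k+1)^r$. This is a routine but slightly tedious calculation; everything else in the proof (KKT-based variational inequality, Taylor bounds, convexity, multiplier bound) is immediate from the machinery already developed in the preceding sections.
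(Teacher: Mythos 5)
Your proposal is correct and follows essentially the same route as the paper: both exploit that $x_{k+1}$ globally minimizes the convex Lagrangian $\mathcal{L}_{sp}(\cdot;x_k;u^{k+1})$, restrict the comparison point to the segment $y_\alpha=(1-\alpha)x_k+\alpha x^*$, use the Taylor bounds, convexity, feasibility of $x_k,x^*$, and the multiplier bound $C_u$ to reach the recursion $\delta_{k+1}\le(1-\alpha)\delta_k+\alpha^{p+1}D_1(2D)^{p+1}+\alpha^{q+1}D_2(2D)^{q+1}$. The only (immaterial) difference is the last step: the paper resolves the recursion via the telescoping weights $A_k=k(k+1)\cdots(k+q)$ with $\alpha_k=(q+1)/(k+q+1)$, whereas you propose a direct induction with $\alpha_k=(r+1)/(k+1)$; both are routine and yield the stated rate.
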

\begin{proof}
If the subproblem \eqref{eq:rn} is convex and Assumption \ref{ass:3} holds, then we  proved on page 7 that the Slater's condition holds for this subproblem and consequently A-KKT conditions \eqref{eq:CAKKT} are valid with $\eta_1 = \eta_2 = \eta_3 = 0$. 
Then, we have:
\begin{align*}
    F(x_{k+1})&\stackrel{\eqref{eq:fineq},\eqref{eq:CAKKT}}{\leq} T_p^{F_0}(x_{k+1};x_k) + \frac{M_p}{(p+1)!}\|x_{k+1} - x_k\|^{p+1} + \frac{M}{(q+1)!}\|x_{k+1} - x_k\|^{q+1} \\
    &\quad+ h(x) + \sum_{i=1}^{m}u_i^{k+1}\left( T^{F_i}_q(x_{k+1} ; x_k) + \frac{M_q^i}{(q+1)!}\|x_{k+1} - x_k\|^{q+1}\right)\\
    & = \min_{x}T_p^{F_0}(x;x_k) + \frac{M_p}{(p+1)!}\|x - x_k\|^{p+1} + \frac{M}{(q+1)!}\|x - x_k\|^{q+1} \\
    &\quad + h(x) + \sum_{i=1}^{m}u_i^{k+1}\left( T^{F_i}_q(x ; x_k) + \frac{M_q^i}{(q+1)!}\|x - x_k\|^{q+1}\right)\\
    &\stackrel{\eqref{eq:fineq}}{\leq} \min_{x} F(x) + \frac{M_p + L_p}{(p+1)!}\|x - x_k\|^{p+1} + \frac{M}{(q+1)!}\|x - x_k\|^{q+1} \\
    &\quad+ \sum_{i=1}^{m}u_i^{k+1}\left( F_i(x) + \frac{(M_q^i + L_q^i)}{(q+1)!}\|x - x_k\|^{q+1}\right)\\
    &\leq \min_{\alpha \in [0,1]} \alpha F^* + (1-\alpha) F(x_k) + \alpha^{p+1}\frac{M_p + L_p}{(p+1)!}\|x_k - x^*\|^{p+1}\\
    &\quad + \frac{\alpha^{q+1} M}{(q+1)!}\|x_k - x^*\|^{q+1} + \sum_{i=1}^{m}u_i^{k+1}\Big(\alpha F_i(x^*) + (1-\alpha) F_i(x_k) \Big)  \\ 
    &\quad + \alpha^{q+1}\frac{u_i^{k+1}(M_q^i + L_q^i)}{(q+1)!}\|x_k - x^*\|^{q+1}\\
    &\leq \min_{\alpha \in [0,1]} \alpha F^* + (1-\alpha) F(x_k) + \alpha^{p+1}\frac{M_p + L_p}{(p+1)!}\|x_k - x^*\|^{p+1}\\
    &\quad + \frac{\alpha^{q+1}M}{(q+1)!}\|x_k - x^*\|^{q+1}
     + \sum_{i=1}^{m} \alpha^{q+1}\frac{u_i^{k+1}(M_q^i + L_q^i)}{(q+1)!}\|x_k - x^*\|^{q+1},
\end{align*}
where the first equality follows from $x_{k+1}$ being the global minimum of the Lagrangian function $\mathcal{L}_{sp}(y;x_k;u^{k+1})$, the last inequality follows from the fact that $x_k$ and $x^*$ are feasible for the problem \eqref{eq:problem}. Since the multipliers are bounded, we get:
\begin{align*}
   F(x_{k+1}) &\leq \min_{\alpha\in[0,1]} \alpha F^* + (1-\alpha) F(x_k) + \alpha^{p+1}\frac{M_p + L_p}{(p+1)!}\|x_k - x^*\|^{p+1}\\
   &\quad + \alpha^{q+1}\left(\frac{M + \sum_{i=1}^{m}C_u (M_q^i + L_q^i)}{(q+1)!}\right)\|x_k - x^*\|^{q+1}.
\end{align*}
Hence, we get the following relation:
\begin{align}\label{eq:for-UC}
  F(x_{k+1}) &\leq \min_{\alpha\in[0,1]} \alpha F^* \!+\! (1\!-\!\alpha) F(x_k) + \alpha^{p+1}D_1\|x_k - x^*\|^{p+1} + \alpha^{q+1}D_2\|x_k - x^*\|^{q+1},    
\end{align}
which implies that:
\begin{align}\label{eq:cv-1}
   F(x_{k+1})\leq \min_{\alpha\in[0,1]} \alpha F^* + (1\! -\! \alpha) F(x_k) + \alpha^{p+1}D_1 (2D)^{p+1}+ \alpha^{q+1}D_2 (2D)^{q+1}.
\end{align}
If $q\leq p$, then from inequality \eqref{eq:cv-1}, we get:
    \begin{align*}
        F(x_{k+1})\leq \min_{\alpha\in[0,1]} \alpha F^* + (1 \!-\! \alpha) F(x_k) + \alpha^{q+1}\left(D_1 (2D)^{p+1} + D_2 (2D)^{q+1}\right). 
    \end{align*}
Since the previous inequality holds for all $\alpha\in[0,1]$, then we consider:
\begin{align*}
 A_k:=k(k+1)\cdots (k + q),\;\;a_{k+1}: = A_{k+1} - A_k = \frac{q+1}{k+q+1}A_{k+1},\;\;\alpha_k := \frac{a_{k+1}}{A_{k+1}}.   
\end{align*}
Therefor, we obtain:
\begin{align*}
    F(x_{k+1})& \leq \frac{a_{k+1}}{A_{k+1}}F^* + \frac{A_k}{A_{k+1}} F(x_k)+ \left(\frac{a_{k+1}}{A_{k+1}}\right)^{q+1}\left(D_1 (2D)^{p+1} + D_2 (2D)^{q+1}\right)\\
    &\leq \frac{a_{k+1}}{A_{k+1}}F^* + \frac{A_k}{A_{k+1}} F(x_k)+ \left(\frac{q+1}{k+q+1}\right)^{q+1}\left(D_1 (2D)^{p+1} + D_2 (2D)^{q+1}\right).
\end{align*}
Multiplying both sides with $A_{k+1}$, we get:
\begin{align*}
    A_{k+1}(F(x_{k+1}) - F^*)&\leq A_k(F(x_k) - F^*) \\
     & \quad + A_{k+1}\left(\frac{q+1}{k+q+1}\right)^{q+1}\left(D_1 (2D)^{p+1} + D_2 (2D)^{q+1}\right)\\
    &\leq A_k(F(x_k) - F^*)+ (q+1)^{q+1}\left(D_1 (2D)^{p+1} + D_2 (2D)^{q+1}\right).
\end{align*}
Summing up this inequality, we get for all $k\geq 1$:
\begin{align*}
    F(x_k) - F^* &\leq \frac{1}{A_k} k(q+1)^{q+1}\left(D_1  (2D)^{p+1} + D_2 (2D)^{q+1}\right)\\
    &\leq \frac{(q+1)^{q+1}\left(D_1 (2D)^{p+1} + D_2 (2D)^{q+1}\right)}{k^q}.
\end{align*}
 Further, if $p\leq q$, then from inequality \eqref{eq:cv-1}, we get:
\begin{align*}
            F(x_{k+1})\leq \min_{\alpha\in[0,1]} \alpha F^* + (1 - \alpha) F(x_k) + \alpha^{p+1}\left(D_1 (2D)^{p+1} + D_2 (2D)^{q+1}\right). 
\end{align*}
Following the same procedure as before, we get:
\begin{align*}
     F(x_k) - F^* \leq \frac{(p+1)^{p+1}\left(D_1 (2D)^{p+1} + D_2 (2D)^{q+1}\right)}{k^p}.   
\end{align*}

\vspace{-0.3cm}

\noindent Hence, our assertion follows.
\end{proof}

\begin{remark}
To the best of our knowledge, this is the first convergence rate using higher-order information for a convex problem with smooth functional constraints. Note that for $p=1$ and $q=1$, we recover the convergence rate in \cite{BolChe:20}. If $m=0$, then we recover the convergence rate in \cite{Nes:20}.
\end{remark}


\subsection{Uniform convex convergence analysis}

In this section, we derive (super)linear convergence in function value for the sequence $(x_k)_{k\geq 0}$ generated by MTA algorithm, provided that  the objective function $F$ is uniformly convex of degree $\theta$ with constant $\sigma$ and the constraints $F_i$'s are only convex functions. For simplicity, let us introduce the following constants $U = \frac{(D_1(2D)^{p-q} + D_2)(q+1)!}{\sigma}$ and $\Bar{U} = \frac{(D_1 + D_2 (2D)^{q-p})(p+1)!}{\sigma}$, where $D_1$ and $D_2$ are defined in Section \ref{sec:cvx}. Then, we can establish the following convergence rate in  function values:
\begin{theorem}\label{th:unif}
 Let the assumptions of Theorem \ref{th:cvx} hold. Additionally, assume that $F$ is uniformly convex of order $\theta$ with constant $\sigma$. Then, the following hold:\\
 (i) If $\theta = \min(p+1,q+1)$, then:
 \begin{align*}
     F(x_{k+1}) - F^* &\leq \max\left( \left(1 - \frac{q}{U^{\frac{1}{q}}(q\!+\!1)^{1+\frac{1}{q}}} \right)  \!,\!   \left(1 - \frac{p}{\Bar{U}^{\frac{1}{p}}(p\!+\!1)^{1+\frac{1}{p}}} \right)  \right)
      (F(x_k) - F^*).\nonumber
 \end{align*}
 (ii) If $\theta < \min(p+1,q+1)$, then:
 \begin{align*}
    F(x_{k+1}) - F^* \leq&\max\left(\frac{(D_1 D^{p-q} + D_2)\theta^{\frac{q+1}{\theta}}}{\sigma^{\frac{q+1}{\theta}}}, \frac{(D_1 + D_2D^{q - p})\theta^{\frac{p+1}{\theta}}}{\sigma^{\frac{p+1}{\theta}}}\right)\\
    &\left(F(x_k) - F^*\right)^{\frac{\min(p+1,q+1)}{\theta}}.
\end{align*}
\end{theorem}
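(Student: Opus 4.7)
The plan is to start from inequality \eqref{eq:for-UC}, derived in the proof of \cref{th:cvx}, which still carries the full dependence on $\|x_k-x^*\|$:
\begin{align*}
F(x_{k+1})-F^* \leq \min_{\alpha\in[0,1]} \bigl[(1-\alpha)(F(x_k)-F^*) + \alpha^{p+1}D_1\|x_k-x^*\|^{p+1} + \alpha^{q+1}D_2\|x_k-x^*\|^{q+1}\bigr].
\end{align*}
To convert this into a recursion purely in function values, I would invoke the standard growth consequence of uniform convexity of $F$ (combined with the first-order optimality condition at the constrained minimizer $x^*$ and convexity of $\mathcal{F}$), namely $F(x_k)-F^* \geq \frac{\sigma}{\theta}\|x_k-x^*\|^\theta$, whence $\|x_k-x^*\| \leq \bigl(\frac{\theta(F(x_k)-F^*)}{\sigma}\bigr)^{1/\theta}$.

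For case (i), $\theta=\min(p+1,q+1)$, consider first $q\leq p$ so that $\theta=q+1$. The $q$-term becomes linear in $F(x_k)-F^*$, while the $p$-term carries an excess factor $\|x_k-x^*\|^{p-q}$, which I would absorb through the diameter bound $\|x_k-x^*\|^{p-q}\leq(2D)^{p-q}$ established in \cref{th:1}; combined with $\alpha^{p+1}\leq\alpha^{q+1}$ on $[0,1]$, this consolidates both regularization terms and produces
\begin{align*}
F(x_{k+1})-F^* \leq \bigl[1-\alpha + U\,\alpha^{q+1}\bigr](F(x_k)-F^*),
\end{align*}
with $U=\frac{(D_1(2D)^{p-q}+D_2)(q+1)!}{\sigma}$ as stated. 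Elementary one-dimensional minimization (setting the derivative to zero) gives the optimizer $\alpha^\star=(U(q+1))^{-1/q}$ and the contraction factor $1-\frac{q}{U^{1/q}(q+1)^{1+1/q}}$. The case $p\leq q$ is entirely symmetric and produces the analogous bound with $\bar U$; the stated inequality is then obtained by taking the maximum of the two contraction factors to cover both sub-cases simultaneously.

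For case (ii), $\theta<\min(p+1,q+1)$, both exponents $(p+1)/\theta$ and $(q+1)/\theta$ strictly exceed one, so any surviving linear term $(1-\alpha)(F(x_k)-F^*)$ would destroy superlinear convergence. I would therefore choose $\alpha=1$ in the minimization, leaving $F(x_{k+1})-F^* \leq D_1\|x_k-x^*\|^{p+1}+D_2\|x_k-x^*\|^{q+1}$. To expose the dominant (slowest-decaying) power, I would again peel off the surplus in one term by the diameter bound, collapsing both contributions to $\|x_k-x^*\|^{\min(p+1,q+1)}$, and then apply the uniform-convex growth inequality raised to the power $\frac{\min(p+1,q+1)}{\theta}>1$; the two sub-cases $q\leq p$ and $p\leq q$ yield the two constants appearing inside the outer maximum.

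The main obstacle is technical rather than conceptual: verifying that the interior optimizer $\alpha^\star$ of case (i) actually lies in $[0,1]$ (equivalently $U(q+1)\geq 1$, which is automatic once the regularization parameters $M_p,M,M_q^i$ are taken large enough) and keeping track of constants under the symmetric split between $q\leq p$ and $p\leq q$. The quadratic-growth estimate $F(x_k)-F^*\geq\frac{\sigma}{\theta}\|x_k-x^*\|^\theta$ deserves a brief justification since $x^*$ is a constrained minimizer and $F$ is composite; it follows by combining uniform convexity of $F$ with a subgradient at $x^*$ and noting that the normal-cone component vanishes when tested against any $x_k\in\mathcal{F}$.
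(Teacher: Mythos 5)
Your proposal follows essentially the same route as the paper: start from inequality \eqref{eq:for-UC}, convert $\|x_k-x^*\|$ into $F(x_k)-F^*$ via uniform convexity and first-order optimality at $x^*$, absorb the excess power through the diameter bound, then optimize over $\alpha$ in case (i) and set $\alpha=1$ in case (ii). The only points of divergence are cosmetic (the paper writes the growth bound with $\sigma/(q+1)!$ rather than $\sigma/\theta$, which is where the factorial in $U$ and $\bar U$ comes from, and it asserts rather than verifies $\alpha^\star\in[0,1]$), so your argument is correct and matches the paper's proof.
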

\begin{proof}
If $q\leq p$, then, from inequality \eqref{eq:for-UC}, we get:
 \begin{align*}
     F(x_{k+1})\leq \alpha F^* + (1-\alpha)F(x_k) + \alpha^{q+1}(D_1 (2D)^{p-q} + D_2)\|x_k - x^*\|^{q+1}.
 \end{align*}
Let $\Lambda^* \in h(x^*)$. Since $F$ is uniformly convex, we get:
\begin{align*}
    F(x_k)&\geq F^* + \langle \nabla F_0(x^*) + \Lambda^* , x_k - x^* \rangle + \frac{\sigma}{(q+1)!}\|x_k - x^*\|^{q+1}\\
          &\geq  F^* + \frac{\sigma}{(q+1)!}\|x_k - x^*\|^{q+1},
\end{align*}
where the last inequality follows from the fact that $x^*$ satisfies the optimality condition: $\langle \nabla F(x^*) + \Lambda^{*} , x - x^* \rangle \geq 0,\;\forall x\in \mathcal{F}$ and that the sequence $(x_k)_{k\geq 0}$ is feasible, i.e., $x_k\in \mathcal{F}$ for all $k\geq 0$.
Combining the last two inequalities, we get:
\begin{align*}
    F(x_{k+1})\leq \min_{\alpha \in [0,1]} \alpha F^* + (1\!-\! \alpha)F(x_k) 
     + \alpha^{q+1}\frac{(D_1 (2D)^{p-q} \!+\! D_2)(q\!+\!1)!}{\sigma}(F(x_k) \!-\! F^*).
\end{align*}
Hence, it follows that:
\begin{align*}
    F(x_{k+1}) - F^* &\leq \min_{\alpha \in [0,1]} \left(1 \!-\! \alpha \!+\! \alpha^{q+1}\frac{(D_1 (2D)^{p\!-\!q} \!+\! D_2)(q\!+\!1)!}{\sigma} \right)(F(x_k) \!-\! F^*).
\end{align*}
By minimizing the right-hand side over $\alpha$, the optimal choice is:
\begin{align*}
    0\leq \alpha = \frac{1}{(q+1)^{\frac{1}{q}} U^{\frac{1}{q}}} \leq 1.
\end{align*}
Replacing this choice in the last inequality, we get:
\begin{align*}
  F(x_{k+1}) - F^* &\leq \left( 1 - \frac{1}{(q+1)^{\frac{1}{q}}U^{\frac{1}{q}}} + \frac{U}{(q+1)^{\frac{q+1}{q}}U^{\frac{q+1}{q}}} \right)(F(x_k) - F^*)\\
                   &\leq \left( 1 - \frac{q}{U^{\frac{1}{q}}(q+1)^{1+\frac{1}{q}}} \right)(F(x_k) - F^*).
\end{align*}
 Further, if $p\leq q$, then we have:
\begin{align*}
    F(x_{k+1}) \leq \alpha_k F^* + (1-\alpha_k)F(x_k) + \alpha^{p+1}(D_1 + D_2 (2D)^{q-p})\|x_k - x^*\|^{p+1}. 
\end{align*}
Since $F$ is uniformly convex of degree $p+1$, we get:
\begin{align*}
    F(x_k)\geq F^* + \frac{\sigma}{(p+1)!}\|x_k - x^*\|^{p+1}.
\end{align*}
Combining the last two inequalities and following the same procedure as in the first case, we get the following statement:
\begin{align*}
     F(x_{k+1}) - F^*\leq \left(1 - \frac{p}{\Bar{U}^{\frac{1}{p}}(p+1)^{1+\frac{1}{p}}} \right)(F(x_k) - F^*).   
\end{align*}
Hence, our first assertion holds. Further, we have:
\begin{align}\label{eq:lc1}
    F(x_k) \geq F^* + \frac{\sigma}{\theta} \|x_k - x^*\|^\theta.
\end{align}
Taking $\alpha = 1$ in inequality \eqref{eq:for-UC} we get:
\begin{align*}
    F(x_{k+1}) - F^* \leq D_1\|x_k - x^* \|^{p+1} + D_2\|x_k - x^* \|^{q+1}. 
\end{align*}
Assume $q\leq p$. Since the sequence $(x_k)_{k\geq 1}$ is bounded, then we further get:
\begin{align*}
     F(x_{k+1}) - F^* \leq (D_1 D^{p-q} + D_2)\|x_k - x^* \|^{q+1}.    
\end{align*}
Combining this inequality with \eqref{eq:lc1}, we get:
\begin{align*}
    F(x_{k+1}) - F^* \leq \frac{(D_1 D^{p-q} + D_2)\theta^{\frac{q+1}{\theta}}}{\sigma^{\frac{q+1}{\theta}}}\left(F(x_k) - f^*\right)^{\frac{q+1}{\theta}}.
\end{align*}
If $p\leq q$, then we also get:

\vspace{-0.5cm}

\begin{align*}
    F(x_{k+1}) - F^* \leq \frac{(D_1 + D_2D^{q - p})\theta^{\frac{p+1}{\theta}}}{\sigma^{\frac{p+1}{\theta}}}\left(F(x_k) - f^*\right)^{\frac{p+1}{\theta}},     
\end{align*}
which proves the second statement of the theorem.
\end{proof}

\begin{remark}
    In \cite{DoiNes:21}  the authors propose an algorithm, different from  MTA,  for solving problem \eqref{eq:problem} based on  lower approximations of the functions  $F_i$, for $i=0:m$. Moreover, in \cite{DoiNes:21} both the objective and the constraints functions  $F_i$'s are assumed uniformly convex. Under these settings,    \cite{DoiNes:21} derives linear convergence in function values for their algorithm. Our result is less conservative since we require only the objective function to be uniformly convex and the functional constraints are assumed convex. Note also that if $p=q=1$ and $h = 0$ we recover the convergence rate in~\cite{AuShTe:10}.
\end{remark}


\section{Efficient solution of subproblem for (non)convex problems}\label{sec:4}
For $p=q=1$ it has been shown that the subproblem \eqref{eq:rn} can be solved efficiently, see \cite{AuShTe:10}. In this section, we show that the subproblem \eqref{eq:rn}  for $p=q=2$ (or $p=2$ and $q=1$) can be also solved efficiently using efficient convex optimization tools.   To this end,  for $p=q=2$ and $h=0$, in order to compute $x_{k+1}$ in subproblem \eqref{eq:rn}, one needs to solve the following problem (here $M_0= M_p + M$):
\begin{align}\label{eq:c-sbp}
&\min_{x \in \mathbb{R}^n} F_0(x_k) + \langle  \nabla F_0(x_k), x - x_k \rangle + \frac{1}{2} \left\langle \nabla^2 F_0(x_k) (x - x_k), (x - x_k) \right\rangle+ \frac{M_0}{6}\|x-x_k \|^3 \\ \nonumber
&\text{s.t.}: F_i(x_k) + \langle \nabla F_i(x_k), x - x_k \rangle + \frac{1}{2} \left\langle \nabla^2 F_i(x_k) (x-x_k), (x - x_k) \right\rangle \\ \nonumber
&\qquad\qquad\qquad\qquad  + \frac{M_i}{6} \|x - x_k\|^3 \leq 0 \;\; i=1\!:\!m.\nonumber
\end{align}

\vspace{-0.2cm}

\noindent Denote $u=(u_0,u_1,\cdots,u_m)$. Then, this problem is equivalent to:

\vspace{-0.4cm}

\begin{align*}
    \min_{x \in \mathbb{R}^n} &\max_{\substack{u\in\mathbb{R}^{m+1}_{+} \\ u_0 = 1}}\; \sum_{i=0}^{m} u_i F_i(x_k) + \left\langle \sum_{i=0}^{m}u_i \nabla F_i(x_k), x - x_k \right\rangle \\
    &\qquad\quad + \frac{1}{2} \left\langle \sum_{i=0}^{m} u_i \nabla^2 F_i(x_k) (x - x_k), (x - x_k) \right\rangle + \frac{\sum_{i=0}^{m} u_i M_i}{6} \|x - x_k\|^3.
\end{align*}

\vspace{-0.2cm}

\noindent Further, we get:

\vspace{-0.3cm}

\begin{align*}
&\min_{x \in \mathbb{R}^n} \max_{ \substack{u\in\mathbb{R}^{m+1}_{+} \\ u_0 = 1}}\; \sum_{i=0}^{m} u_i F_i(x_k) + \left\langle \sum_{i=0}^{m}u_i \nabla F_i(x_k), x \!-\! x_k \right\rangle\\
& + \frac{1}{2} \left\langle \sum_{i=0}^{m} u_i \nabla^2 F_i(x_k) (x \!-\! x_k), (x \!-\! x_k) \right\rangle
    \! +\max_{w\geq 0} \left( \frac{w}{4}\|x \!-\! x_k\|^2 - \frac{1}{12(\sum_{i=0}^{m} u_i M_i)^2} w^3 \right).
\end{align*}

\vspace{-0.3cm}

\noindent Denote for simplicity $H(u,w) = \sum_{i=0}^{m} u_i \nabla^2 F_i(x_k) + \frac{w}{2} I$, $g(u) = \sum_{i=0}^{m}u_i \nabla F_i(x_k)$, $l(u) = \sum_{i=0}^{m} u_i F_i(x_k)$ and $\Tilde{M}(u) = \sum_{i=0}^{m}u_i M_i$. 
Then, the dual formulation of this problem takes the form:

\vspace{-0.8cm}

\begin{align*}
 \min_{x \in \mathbb{R}^n} \max_{ \substack{(u,w)\in\mathbb{R}^{m+2}_{+}\\ u_0 = 1}}\; &l(u)\! +\! \left\langle g(u), x \!-\! x_k \right\rangle \!+\! \frac{1}{2} \left\langle H(u,w) (x \!-\! x_k), (x \!-\! x_k) \right\rangle \!-\! \frac{w^3}{12(\sum_{i=0}^{m} u_i M_i)^2}.    
\end{align*}

\vspace{-0.3cm}

\noindent Consider the following notations:

\vspace{-0.6cm}

\begin{align*}
&  \theta(x,u) \!= l(u) \!+\! \langle  g(u), x\!-\!x_k \rangle \!+\! \frac{1}{2}\! \left\langle\!\! \left(\sum_{i=0}^{m} u_i \nabla^2 F_i(x_k)\right)\! (x \!-\! x_k), x \!-\! x_k \!\!\right\rangle \!+\! \frac{\Tilde{M}(u)}{6}\|x \!-\! x_k\|^3, \\
    & \beta(u,w) = l(u) -\frac{1}{2} \left\langle H(u,w)^{-1} g(u) , g(u)\right\rangle - \frac{1}{12(\Tilde{M}(u))^2} w^3, \\
    & D = \left\{ (u_0,u_1,\cdots,u_m,w)\in\mathbb{R}^{m+2}_+:\; u_0 = 1\; \text{and}\; \sum_{i=0}^{m}u_i\nabla^2 F_i(x_k) + \frac{w}{2}I\succ 0  \right\}.
\end{align*}

\vspace{-0.3cm}

\noindent Then, we have the following theorem:
\begin{theorem}
If there exists an $M_i >0$, for some $i=0:m$, then we have the following relation:

\vspace{-0.6cm}

    \begin{align*}
        \theta^*:=\min_{x\in\mathbb{R}^n}\max_{u\geq 0} \theta(x,u)  =  \max_{(u,w)\in D}\beta(u,w) = \beta^*. 
    \end{align*}

    \vspace{-0.3cm}
    
\noindent Moreover, for any $(u,w)\in D$  the  direction $x(u,w) = x_k -H(u,w)^{-1}g(u)$ satisfies:
 \begin{align}\label{eq:5.2}
     0\leq \theta(x(u,w),u) - \beta(u,w)  = \frac{\Tilde{M}(u)}{12} \left(\frac{w}{\Tilde{M}(u)} + 2r_k\right)\left(r_k - \frac{w}{\Tilde{M}(u)}\right)^2,
 \end{align}

\vspace{-0.3cm}
 
 \noindent where $r_k = \|x(u,w) - x_k\|$.
\end{theorem}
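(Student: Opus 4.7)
The plan is to establish the explicit gap identity \eqref{eq:5.2} by direct algebra, which immediately gives weak duality $\theta^{*}\geq\beta^{*}$ via an elementary Fenchel identity, and then to deduce the reverse inequality by combining Lagrangian duality for the primal subproblem with Nesterov--Polyak duality for the inner cubic-regularized quadratic subproblem. For the gap formula, I substitute $x(u,w)-x_k = -H(u,w)^{-1}g(u)$ into $\theta(x,u)$; using the identity $\sum_{i=0}^{m}u_i\nabla^{2}F_i(x_k) = H(u,w) - \tfrac{w}{2}I$, the quadratic term in $\theta(x(u,w),u)$ reduces to $\tfrac{1}{2}\langle H^{-1}g,g\rangle - \tfrac{w}{4}r_k^{2}$. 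Combining with the linear term $-\langle g,H^{-1}g\rangle$ and the cubic term $\tfrac{\tilde M(u)}{6}r_k^{3}$ and then subtracting $\beta(u,w)$ yields
\[
\theta(x(u,w),u)-\beta(u,w) \;=\; \frac{\tilde M(u)}{12}\bigl(2r_k^{3}-3sr_k^{2}+s^{3}\bigr), \qquad s:=w/\tilde M(u),
\]
and the factorization $2r_k^{3}-3sr_k^{2}+s^{3}=(s+2r_k)(r_k-s)^{2}$ (the polynomial in $s$ has double root $r_k$ and simple root $-2r_k$) produces \eqref{eq:5.2}. Non-negativity of the gap follows from $s,r_k\geq 0$.

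For weak duality, I apply the elementary Fenchel identity $\tfrac{M}{6}t^{3}=\max_{w\geq 0}\bigl(\tfrac{w}{4}t^{2}-\tfrac{w^{3}}{12M^{2}}\bigr)$ (a routine one-variable optimization with maximizer $w=Mt$) to $M=\tilde M(u)$ and $t=\|x-x_k\|$. This yields $\theta(x,u)\geq L(x,u,w):=l(u)+\langle g(u),x-x_k\rangle+\tfrac{1}{2}\langle H(u,w)(x-x_k),x-x_k\rangle-\tfrac{w^{3}}{12\tilde M(u)^{2}}$ for every $(u,w)$. For $(u,w)\in D$ the convex quadratic $L(\cdot,u,w)$ is minimized in $x$ at $x(u,w)$ with value $\beta(u,w)$, so $\min_x\theta(x,u)\geq\beta(u,w)$; taking suprema in $(u,w)\in D$ and then in $u$ gives $\theta^{*}\geq\max_u\min_x\theta(x,u)\geq\sup_{(u,w)\in D}\beta(u,w)=\beta^{*}$.

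For the reverse inequality, the primal subproblem \eqref{eq:c-sbp} is convex in $x$ (the cubic regularization in the objective dominates any indefinite quadratic term under the hypothesis that some $M_i>0$, cf.\ Theorem~2 of \cite{Nes:20}) and satisfies Slater's condition by the derivation on page~7, so Lagrangian duality gives $\theta^{*}=\max_u\min_x\theta(x,u)$. For each fixed $u$ with $\tilde M(u)>0$, the inner cubic-regularized subproblem $\min_x\theta(x,u)$ admits a unique $w^{*}(u)\geq 0$ solving the secular equation $w=\tilde M(u)\|H(u,w)^{-1}g(u)\|$ with $(u,w^{*}(u))\in D$; at this $w^{*}(u)$ one has $s=r_k$, so \eqref{eq:5.2} gives $\theta(x(u,w^{*}(u)),u)=\beta(u,w^{*}(u))$, and $x(u,w^{*}(u))$ attains the global minimum of $\theta(\cdot,u)$ by Nesterov--Polyak. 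Thus $\min_x\theta(x,u)=\beta(u,w^{*}(u))\leq\beta^{*}$, and $\theta^{*}\leq\beta^{*}$ follows by taking $\max_u$. The principal technical obstacle is unique solvability of the secular equation inside $D$ even when $\sum_i u_i\nabla^{2}F_i(x_k)$ is indefinite, which is the content of the Nesterov--Polyak analysis: on the connected component of $\{w\geq 0:H(u,w)\succ 0\}$ containing large $w$, the map $w\mapsto\|H(u,w)^{-1}g(u)\|$ is continuous and strictly decreasing while $w/\tilde M(u)$ is strictly increasing, yielding a unique crossing in the generic case $g(u)\neq 0$; the degenerate case $g(u)=0$ and the hard case (eigenspace of $\lambda_{\min}$ orthogonal to $g(u)$) can be handled by limiting arguments along $w\downarrow 0$ and by selecting $x(u,w^{*}(u))$ from the flat minimum set respectively.
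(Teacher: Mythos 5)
Your gap identity \eqref{eq:5.2} and the weak-duality half $\theta^*\geq\beta^*$ are correct and essentially coincide with the paper's own argument: the reduction of $\theta(x(u,w),u)-\beta(u,w)$ to $\frac{\tilde{M}(u)}{12}\left(s^3-3sr_k^2+2r_k^3\right)$ with $s=w/\tilde{M}(u)$, the factorization $(s+2r_k)(r_k-s)^2$, and the one-variable identity $\frac{M}{6}t^{3}=\max_{w\geq 0}\left(\frac{w}{4}t^{2}-\frac{w^{3}}{12M^{2}}\right)$ all check out (the paper phrases weak duality as $\min\max\geq\max\min$ applied to $L$ rather than through $\theta(x,u)\geq L(x,u,w)$, but these are the same inequality).

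The reverse inequality is where you have a genuine error. You justify $\theta^*=\max_u\min_x\theta(x,u)$ by asserting that the subproblem \eqref{eq:c-sbp} is convex in $x$ because ``the cubic regularization dominates any indefinite quadratic term.'' That is false: the Hessian of $\frac12\left\langle\nabla^2F_0(x_k)(x-x_k),x-x_k\right\rangle+\frac{M_0}{6}\|x-x_k\|^3$ at $x=x_k$ is exactly $\nabla^2F_0(x_k)$, which is indefinite when $F_0$ is nonconvex, so no choice of $M_0>0$ convexifies the model near $x_k$; Theorem 2 of \cite{Nes:20} requires the underlying function to be convex and does not apply here. The entire point of this section (see the remark following the theorem, which speaks of the global minimum of a \emph{nonconvex} cubic problem over \emph{nonconvex} cubic constraints) is that the subproblem is nonconvex yet strong duality holds anyway, so Slater-based Lagrangian duality is unavailable, and the minimax exchange you invoke is essentially the statement being proved. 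The paper closes the loop differently: it computes $\nabla_w\beta(u,w)=\frac14\left(r_k+\frac{w}{\tilde{M}(u)}\right)\left(r_k-\frac{w}{\tilde{M}(u)}\right)$ and argues that at an (assumed interior) maximizer $(u^*,w^*)$ of $\beta$ over $D$, stationarity forces $w^*/\tilde{M}(u^*)=r_k$, so by \eqref{eq:5.2} the duality gap at $(x(u^*,w^*),u^*,w^*)$ vanishes; together with weak duality and the stationarity in $u$ (which encodes feasibility and complementary slackness of $x(u^*,w^*)$ for \eqref{eq:c-sbp}) this yields $\theta^*=\beta^*$. Your secular-equation analysis is the right Nesterov--Polyak ingredient, but it only controls $\min_x\theta(x,u)$ for fixed $u$, i.e., the max--min value; without the (nontrivial, nonconvex) minimax exchange it gives no upper bound on $\theta^*=\min_x\max_u\theta(x,u)$. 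To repair the proof, replace the convexity claim by the paper's stationarity argument at the dual maximizer.
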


\begin{proof}
First, we  show that $\theta^*\geq \beta^*$. Indeed, using a similar reasoning as in \cite{NesPol:06}, we have:  
\begin{align*}
\theta^* & =  \min_{x \in \mathbb{R}^n} \max_{ \substack{(u,w)\in\mathbb{R}^{m+2}_{+}\\ u_0 = 1}}l(u) + \left\langle g(u), x - x_k \right\rangle + \frac{1}{2} \left\langle H(u,w) (x - x_k), (x - x_k) \right\rangle \!-\! \frac{ w^3}{12\Tilde{M}(u)^2} \\
     &\geq \max_{\substack{(u,w)\in\mathbb{R}^{m+2}_{+}\\ u_0 = 1}} \min_{x \in \mathbb{R}^n}l(u) + \left\langle g(u), x - x_k \right\rangle + \frac{1}{2} \left\langle H(u,w) (x - x_k), (x - x_k) \right\rangle\!-\! \frac{ w^3}{12\Tilde{M}(u)^2} \\
     &\geq \max_{(u,w)\in D} \min_{x \in \mathbb{R}^n}l(u) + \left\langle g(u), x - x_k \right\rangle + \frac{1}{2} \left\langle H(u,w) (x - x_k), (x - x_k) \right\rangle - \frac{ w^3}{12\Tilde{M}(u)^2}\\
     & = \max_{(u,w)\in D} l(u) -\frac{1}{2} \left\langle H(u,w)^{-1} g(u) , g(u)\right\rangle - \frac{1}{12(\sum_{i=0}^{m}u_i M_i)^2} w^3
      = \beta^*.
\end{align*}
Let $(u,w) \in D$. Then, we have
$g(u)= - H(u,w)(x(u,w) - x_k)$ and thus: 
\begin{align*}
&\theta(x(u,w),u) = l(u) +   \langle  g(u), x(u,w) - x_k \rangle +  \frac{\Tilde{M}(u)}{6}r_k^3  \\
&\quad + \frac{1}{2} \left\langle \left(\sum_{i=0}^{m} u_i \nabla^2 F_i(x_k)\right) (x(u,w)- x_k), x(u,w) - x_k \right\rangle\\
& = l(u)  - \left\langle H(u,w)(x(u,w) - x), x(u,w) - x \right\rangle + \frac{\Tilde{M}(u)}{6}r_k^3  \\
&\quad + \frac{1}{2} \left\langle \left(\sum_{i=0}^{m} u_i \nabla^2 F_i(x_k)\right) (x(u,w)- x_k), x(u,w) - x_k \right\rangle\\  
& = l(u) \!-\!\frac{1}{2}\!\! \left\langle \!\! \left(\sum_{i=0}^{m} u_i \nabla^2 F_i(x_k) + \frac{w}{2}I\right)(x(u,w) - x_k), x(u,w) - x_k \right\rangle \!-\! \frac{w}{4}r_k^2 +  \frac{\Tilde{M}(u)}{6}r_k^3 \\
& = \beta(u,w) + \frac{1}{12\Tilde{M}(u)^2}w^3 - \frac{w}{4}r_k^2 +  \frac{\Tilde{M}(u)}{6}r_k^3 \\
& = \beta(u,w) + \frac{\Tilde{M}(u)}{12}\!\left(\frac{w}{\Tilde{M}(u)}\right)^3 \!\!\!- \!\frac{\Tilde{M}(u)}{4}\!\! \left(\frac{w}{\Tilde{M}(u)}\right)\!r_k^2\!+\! \frac{\Tilde{M}(u)}{6}r_k^3 \\
& =  \beta(u,w) + \frac{\Tilde{M}(u)}{12} \left(\frac{w}{\Tilde{M}(u)} + 2r_k\right)\left(r_k - \frac{w}{\Tilde{M}(u)}\right)^2,
\end{align*}
which proves \eqref{eq:5.2}. Note that we have:
\begin{align*}
    \nabla_w \beta(u,w) & = \frac{1}{4} \|x(u,w) - x_k\|^2 - \frac{1}{4\Tilde{M}(u)^2} w^2  = \frac{1}{4} \left(r_k + \frac{w}{\Tilde{M}(u)}\right)\left(r_k - \frac{w}{\Tilde{M}(u)}\right).
\end{align*}
Therefore if $\beta^*$ is attained at some $(u^*,w^*) >0$ from $D$, then we have $\nabla \beta(u^*,w^*) = 0$. This implies $\frac{w^*}{\Tilde{M}(u^*)} = r_k(u^*,w^*)$ and by \eqref{eq:5.2} we conclude that $\theta^* = \beta^*$.
\end{proof}
\begin{remark}
Note that in nondegenerate situations \textit{the global minimum} of  nonconvex cubic problem over nonconvex cubic constraints \eqref{eq:c-sbp} can be computed by:
\begin{align*}
    x_{k+1} =x_k - H(u,w)^{-1}g(u),
\end{align*}
where recall that  $H(u,w) = \sum_{i=0}^{m} u_i \nabla^2 F_i(x_k) + \frac{w}{2} I$, $g(u) = \sum_{i=0}^{m}u_i \nabla F_i(x_k)$ and $l(u) = \sum_{i=0}^{m} u_i F_i(x_k)$, with $(u,w)$ the solution of the following dual problem:
\begin{align}\label{eq:d-sbp}
&\max_{(u,w)\in D}\; l(u) -\frac{1}{2} \left\langle H(u,w)^{-1} g(u) , g(u)\right\rangle - \frac{1}{12(\sum_{i=0}^{m}u_i M_i)^2} w^3,
\end{align}
i.e., a maximization of a concave function over a convex set $D$. Hence, if $m$ is not too large, this dual problem can be solved very efficiently by interior point methods \cite{NesNem:94}. 
\end{remark}

\begin{corollary}
If there exist $M_i >0$, then the set $D$ is nonempty and convex. If the problem \eqref{eq:d-sbp} has solution, then  strong duality holds for the subproblem \eqref{eq:c-sbp}.

\end{corollary}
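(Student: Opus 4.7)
The corollary has two parts, which I would tackle separately.

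The plan for the first part (nonemptiness and convexity of $D$) is mostly a bookkeeping argument. For nonemptiness, I would exhibit an explicit point: take $u_0=1$, $u_i=0$ for $i\geq 1$, and choose $w>0$ large enough so that $\nabla^2 F_0(x_k)+\tfrac{w}{2}I \succ 0$ (such $w$ exists because $\nabla^2 F_0(x_k)$ is symmetric with a finite smallest eigenvalue); note this only requires some $M_{i_0}>0$ so that $\tilde{M}(u)>0$ on $D$, which is needed elsewhere. For convexity, I would note that the map $(u,w)\mapsto H(u,w)=\sum_{i=0}^{m}u_i\nabla^2 F_i(x_k)+\tfrac{w}{2}I$ is affine, and the cone of positive-definite symmetric matrices is convex, so the preimage $\{(u,w):H(u,w)\succ 0\}$ is convex; intersecting with the affine constraint $u_0=1$ and the half-spaces $u_i\geq 0$, $w\geq 0$ preserves convexity.

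The plan for strong duality is to combine the identity \eqref{eq:5.2} with first-order conditions for the dual. Let $(u^*,w^*)$ solve \eqref{eq:d-sbp} and set $r_k^*=\|x(u^*,w^*)-x_k\|$. Using the expression $\nabla_w\beta(u,w)=\tfrac14(r_k+\tfrac{w}{\tilde{M}(u)})(r_k-\tfrac{w}{\tilde{M}(u)})$ computed in the preceding theorem, optimality in $w$ on $\{w\geq 0\}$ gives $r_k^*=w^*/\tilde{M}(u^*)$ (in the boundary case $w^*=0$ the inequality forces $r_k^*=0$, hence equality still holds). Plugging into \eqref{eq:5.2} collapses the right-hand side to zero, so $\theta(x(u^*,w^*),u^*)=\beta(u^*,w^*)=\beta^*$.

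To finish, I would show that $u^*$ actually attains $\max_u\theta(x(u^*,w^*),u)$, which then yields $\theta^*\leq\beta^*$ (and combined with weak duality $\theta^*\geq\beta^*$ from the theorem, strong duality follows). The key observation is that $\theta(x,u)$ is \emph{linear} in $u$ with coefficients equal to the subproblem constraint functions $\tilde{F}_i(x):=F_i(x_k)+\langle \nabla F_i(x_k),x-x_k\rangle+\tfrac12\langle \nabla^2 F_i(x_k)(x-x_k),x-x_k\rangle+\tfrac{M_i}{6}\|x-x_k\|^3$. By Danskin's envelope theorem applied to $\beta(u,w)=\min_x[\cdots]-\tfrac{w^3}{12\tilde{M}(u)^2}$, one gets $\nabla_{u_i}\beta(u^*,w^*)=\tilde{F}_i(x(u^*,w^*))$ once $r_k^*=w^*/\tilde{M}(u^*)$ is used. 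KKT for the dual on $u_i\geq 0$ then delivers $\tilde{F}_i(x(u^*,w^*))\leq 0$ (feasibility of $x(u^*,w^*)$ for \eqref{eq:c-sbp}) and $u_i^*\tilde{F}_i(x(u^*,w^*))=0$ (complementary slackness) for all $i\geq 1$. These together imply $\max_{u\geq 0,\,u_0=1}\theta(x(u^*,w^*),u)=\theta(x(u^*,w^*),u^*)$, giving $\theta^*\leq \max_u\theta(x(u^*,w^*),u)=\beta^*$ as required.

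The main obstacle I foresee is the careful handling of boundary cases in the KKT argument (in particular $w^*=0$ and inactive multipliers $u_i^*=0$), and justifying the differentiation of $\beta$ through the inversion of $H(u,w)$; Danskin's theorem provides the cleanest route, but one must verify that the minimizer in $x$ is unique so that $\beta$ is differentiable at $(u^*,w^*)$, which is guaranteed by $H(u^*,w^*)\succ 0$ since $(u^*,w^*)\in D$.
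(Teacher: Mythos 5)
Your proposal is correct, and for the first claim (nonemptiness and convexity of $D$) it is the standard argument the paper implicitly relies on: $H(u,w)$ is affine in $(u,w)$, so $D$ is the intersection of the affine preimage of the open convex cone of positive-definite operators with a polyhedron, and the point $u_0=1$, $u_i=0$ for $i\geq 1$, $w$ large lies in it. For strong duality, the paper offers no separate proof of the corollary and leans entirely on the closing lines of the preceding theorem, which argue only that $\nabla\beta(u^*,w^*)=0$ forces $w^*/\tilde{M}(u^*)=r_k$ and hence, via \eqref{eq:5.2}, $\theta(x(u^*,w^*),u^*)=\beta^*$. That by itself gives $\theta^*\leq \max_{u}\theta(x(u^*,w^*),u)$, not $\theta^*\leq\beta^*$; one still has to check that $u^*$ attains the inner maximum, which amounts to primal feasibility $\tilde{F}_i(x(u^*,w^*))\leq 0$ and complementary slackness $u_i^*\tilde{F}_i(x(u^*,w^*))=0$. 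Your Danskin/KKT argument on $\beta$ supplies exactly this missing step (and also handles the boundary cases $w^*=0$ and $u_i^*=0$, which the paper's ``$(u^*,w^*)>0$'' assumption sidesteps), so your route is the same in outline but strictly more complete than what the paper records. The only point to be pedantic about in a written version is that the maximizer is interior to the constraint $H(u,w)\succ 0$ (since $D$ is defined by a strict inequality), so the only possibly active constraints in the dual KKT system are $u_i\geq 0$ and $w\geq 0$; you have implicitly used this and it is worth stating.
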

In conclusion,  MTA algorithm can be implementable for $p=q=2$ even for nonconvex problems, since we can effectively compute  the global minimum $x_{k+1}$ of subproblem \eqref{eq:rn} using the powerful tools from convex optimization. Note that a similar analysis can be derived for $p=2$ and $q=1$.  Next, we show the efficiency of the MTA algorithm numerically and compare it with existing methods from literature.


\section{\red{Numerical illustrations}}\label{sec:experiments}
In this section we present numerical experiments illustrating the performance of MTA algorithm and compare it with existing algorithms from the literature. We consider an optimization problem that is based on the convex function $x\mapsto\log\left(1 + \exp(a_0^{T}x)\right)$ having gradient Lipschitz with constant $\|a_0\|^2$ and hessian Lipschitz with constant $2\|a_0\|^3$, and on the nonconvex  function $x\mapsto\log\left(\frac{(c^{T}x +e)^2}{2} +1\right)$ having also  gradient Lipschitz with constant $2\|c\|^2$ and hessian Lipschitz with constant $4\|c\|^3$. These functions appear frequently in machine learning applications \cite{ZhXuGu:18}. In order to make our problem nontrivial and highly nonconvex, we add quadratic regularizers, i.e.,  we consider the problem:
\begin{align}\label{prb_sim}
    &\min_{x} F(x) =  \log\left(1+\exp(a_0^{T}x)\right) + \frac{1}{2}x^{T}Q_0x + c_0^{T}x + d_0\\
    &\text{s.t.}:\; F_i(x) = \log\left(\frac{(a_i^{T}x + b_i)^2}{2} +1\right) + \frac{1}{2}x^{T}Q_ix + c_i^{T}x + d_i\leq 0,\;i=1:m.\nonumber
\end{align}
We generate the data $a_i,b_i,Q_i,c_i,d_i$, for $i=0:m$, randomly, where $Q_i$'s are symmetric indefinite matrices  such that the problem is strictly feasible (this is ensured by the choice $d_i < \log\left(\frac{b_i^2}{2}\right)$ for $i=1:m$). Hence, the problem \eqref{prb_sim} is nonconvex, i.e., both the objective and the constraints are nonconvex functions. Our numerical simulation are performed as follows: for  given problem  data and an initial feasible point $x_0$ we compute an approximate $F^*$ and $x^*$ solution of \eqref{prb_sim} using IPOPT \cite{WacBie:06}; then, we implement our algorithm MTA(2,1) (i.e., $p=2,q=1$), MTA(2,2) (i.e., $p=q=2$), {with the regularization parameters chosen to satisfy   $M_p > L_p$ and  $M_q^i > L_q^i$ for all $i=1:m$}, and compare with SCP \cite{MesBau:21} and MBA algorithm proposed in \cite{AuShTe:10}. \red{Recall that SCP   linearizes only the nonconvex functions in the  objective and constraints at the current iterate and (possibly) adds some quadratic regularizations, keeping the other convex functions unchanged, see \cite{MesBau:21} for mode details}. Note also that MBA is similar to MTA for $p=q=1$.  The stopping criterion are: $F(x_k) - F^*\leq 10^{-3}$ and $\max_{i=1:m}(0,F_i(x_k)) \leq 10^{-3}$ and each subproblem is solved using IPOPT ({i.e., for MTA(2,1) and MTA(2,2) we use IPOPT to solve the corresponding dual subproblem \eqref{eq:d-sbp} at each iteration}). The results are given in Table \ref{tab:1} for different choices of the problem dimension ($n$) and number of constraints ($m$). In the table we report the cpu time and number of iterations for each method. From our numerical simulations, one can observe that our MTA(2,1)  performs better than MBA and SCP  methods, although the theoretical convergence rates are the same for all three methods (see \cite{AuShTe:10,YuPoLu:21}). Moreover, increasing $p$ and $q$, e.g., algorithm MTA(2,2), leads to even  much  better performance  for our algorithm, i.e.,  MTA(2,2) is superior to  MTA(2,1), MBA and SCP, which is expected from our convergence theory. Figure 1 also shows that  increasing the approximation orders $p$ and $q$ is beneficial in  our MTA algorithm, leading to better performance than first order methods (e.g. MBA and SCP) or than MTA(2,1).  





\begin{figure}[]
     \centering
     \begin{subfigure}[b]{0.48\textwidth}
         \centering
         \includegraphics[width=6.2cm, height=5.5cm]{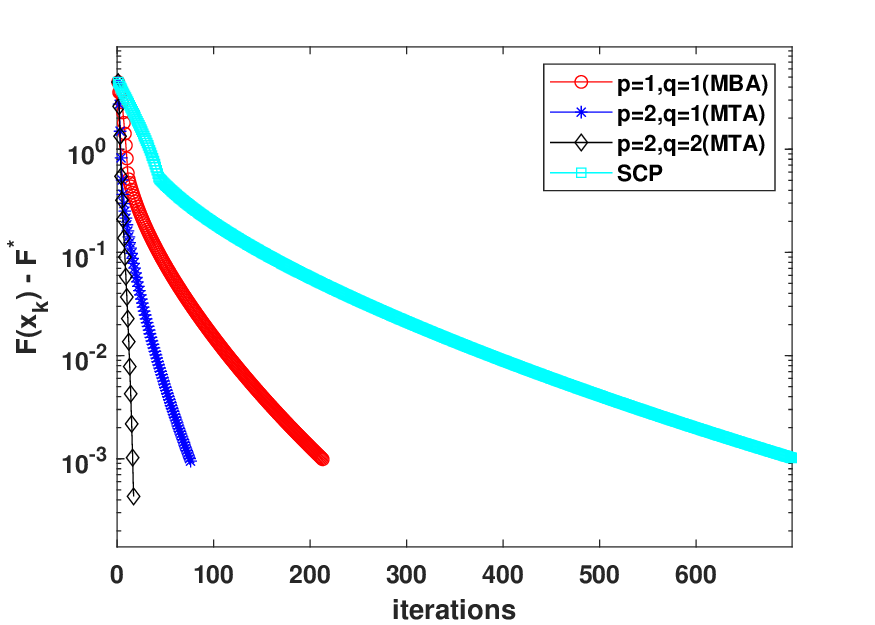}
         \label{fig:1}
     \end{subfigure}
     \begin{subfigure}[b]{0.48\textwidth}
         \centering
         \includegraphics[width=6.2cm, height=5.5cm]{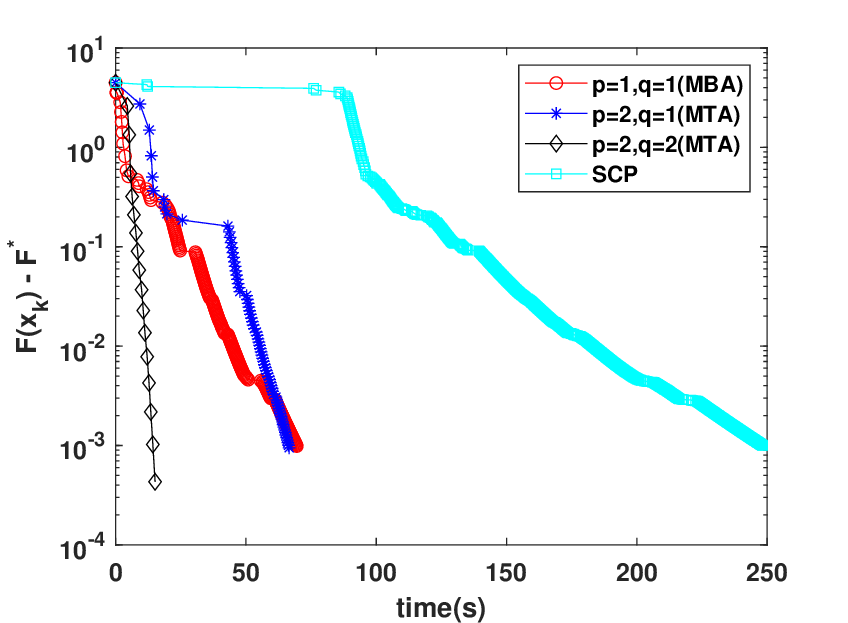}
         \label{fig:2}
     \end{subfigure}
        \caption{Behaviour of residual function for MTA with $q=2,p=1$ and with $q=p=2$, MBA and SCP along iterations (left) and time in sec (right): $n=100, m=10$.}
        \label{fig:three graphs}
\end{figure}


\begin{table}[]
\begin{tabular}{|ll|ll|ll|ll|ll|}
\hline
\multicolumn{2}{|l|}{}                          & \multicolumn{2}{l|}{SCP}      & \multicolumn{2}{l|}{MBA\cite{AuShTe:10}\;(MTA(1,1))}        & \multicolumn{2}{l|}{MTA(2,1)}   & \multicolumn{2}{l|}{MTA(2,2)}   \\ \hline
\multicolumn{1}{|l|}{n}                   & m   & \multicolumn{1}{l|}{cpu}  & iter & \multicolumn{1}{l|}{cpu} & iter & \multicolumn{1}{l|}{cpu} & iter & \multicolumn{1}{l|}{cpu} & iter \\ \hline

\multicolumn{1}{|l|}{\multirow{6}{*}{10}} & 10  & \multicolumn{1}{l|}{16.3}    &   565    & \multicolumn{1}{l|}{9.2}    &    230  & \multicolumn{1}{l|}{5.5}    &  131    & \multicolumn{1}{l|}{1.3}    &  20    \\ \cline{2-10} 

\multicolumn{1}{|l|}{}                    & 20  & \multicolumn{1}{l|}{24.5}    & 458     & \multicolumn{1}{l|}{9.8}    & 149    & \multicolumn{1}{l|}{4.2}    & 64      & \multicolumn{1}{l|}{2.5}    & 23     \\ \cline{2-10} 

\multicolumn{1}{|l|}{}                    & 50  & \multicolumn{1}{l|}{81.5}    & 401     & \multicolumn{1}{l|}{52.5}    &  184    & \multicolumn{1}{l|}{30.9}    & 118     & \multicolumn{1}{l|}{8.5}    & 18      \\ \cline{2-10} 

\multicolumn{1}{|l|}{}                    & 100 & \multicolumn{1}{l|}{428.7}    &   818    & \multicolumn{1}{l|}{145.6}    &  246    & \multicolumn{1}{l|}{55.2}    & 90     & \multicolumn{1}{l|}{25}    &  19    \\ \cline{2-10}

\multicolumn{1}{|l|}{}                    & 500 & \multicolumn{1}{l|}{1119}    &    146   & \multicolumn{1}{l|}{534.7}    &  50    & \multicolumn{1}{l|}{252.8}    & 27    & \multicolumn{1}{l|}{135.9}    &   9   \\  \cline{2-10}

\multicolumn{1}{|l|}{}                    & $10^3$ & \multicolumn{1}{l|}{1.1$\cdot 10^4$}    &  394    & \multicolumn{1}{l|}{6.7$\cdot 10^3$ }    &  188   & \multicolumn{1}{l|}{4.2$\cdot 10^3$ }    &103   & \multicolumn{1}{l|}{499.3 }    & 10    \\ \hline
\multicolumn{1}{|l|}{\multirow{5}{*}{20}}  &10  & \multicolumn{1}{l|}{149.8}    & 2477     & \multicolumn{1}{l|}{40.7}    & 700     & \multicolumn{1}{l|}{25}    &  243   & \multicolumn{1}{l|}{13.5}    &   84   \\ \cline{2-10} 

\multicolumn{1}{|l|}{}                    & 20  & \multicolumn{1}{l|}{396.5}    & 4134    & \multicolumn{1}{l|}{166.9}    &  1253     & \multicolumn{1}{l|}{69.1}    &  494    & \multicolumn{1}{l|}{25.3}    &  122    \\ \cline{2-10} 

\multicolumn{1}{|l|}{}                    & 50  & \multicolumn{1}{l|}{544.5}    &  1288    & \multicolumn{1}{l|}{266.7}    &  580    & \multicolumn{1}{l|}{151.6}    &   331   & \multicolumn{1}{l|}{37.5}    &   37   \\ \cline{2-10} 

\multicolumn{1}{|l|}{}        & 100  & \multicolumn{1}{l|}{862.4}    & 551      & \multicolumn{1}{l|}{441}    & 243     & \multicolumn{1}{l|}{264.2}    &   148   & \multicolumn{1}{l|}{72.3}    &  23    \\ \cline{2-10}

\multicolumn{1}{|l|}{}                    & 500 & \multicolumn{1}{l|}{1.9$\cdot 10^4$}    &   1078    & \multicolumn{1}{l|}{$10^4$}    &   454   & \multicolumn{1}{l|}{5052}    & 245    & \multicolumn{1}{l|}{767.2}    & 26     \\ \hline
\multicolumn{1}{|l|}{\multirow{6}{*}{100}} &  10  & \multicolumn{1}{l|}{247}    &  696   & \multicolumn{1}{l|}{69.3}    & 211     & \multicolumn{1}{l|}{66}    &  76    & \multicolumn{1}{l|}{15.5}    &  17    \\ \cline{2-10} 

\multicolumn{1}{|l|}{}                    & 20  & \multicolumn{1}{l|}{6159}    &  1179    & \multicolumn{1}{l|}{713.3}    &  306    & \multicolumn{1}{l|}{350}    &  79    & \multicolumn{1}{l|}{252}    &  28    \\ \cline{2-10} 

\multicolumn{1}{|l|}{}                    & 50  & \multicolumn{1}{l|}{2.3$\cdot 10^4$}    &   1460    & \multicolumn{1}{l|}{5974}    &  235    & \multicolumn{1}{l|}{1026}    &  25    & \multicolumn{1}{l|}{711.3}    &    18  \\ \cline{2-10} 

\multicolumn{1}{|l|}{}                    & 100  & \multicolumn{1}{l|}{5.6$\cdot 10^4$}    & 5611     & \multicolumn{1}{l|}{$1.1\cdot 10^4$}    & 1138     & \multicolumn{1}{l|}{2055}    &  89    & \multicolumn{1}{l|}{1252}    &   40   \\ \cline{2-10}

\multicolumn{1}{|l|}{}                    & 500 & \multicolumn{1}{l|}{$1.2 \cdot 10^5$}    & 1384      & \multicolumn{1}{l|}{3$\cdot 10^4$}    &      338 & \multicolumn{1}{l|}{6325}    &  67   & \multicolumn{1}{l|}{2155}    & 20     \\ \hline 
\end{tabular}
\caption{Comparison between MTA with $q=2,p=1$ and with $q=p=2$, MBA and SCP in terms of iterations and cpu time (sec) for different values $m$ and $n$.}\label{tab:1}
\end{table}

\section{Conclusions}
\label{sec:conclusions}
In this paper, we have proposed a higher-order algorithm  for solving composite problems with smooth functional constraints, called MTA. Our method uses higher-order derivatives to build a model that approximates the objective and the functional constraints. We have proven global convergence guarantees in both nonconvex and convex cases. We have also shown that our algorithm MTA is implementable and efficient in numerical simulations. \red{Our approach requires starting from a feasible point, hence it will be important to explore strategies allowing to relax this assumption.}

\medskip



\begin{thebibliography}{100}

\bibitem{AnMaSaSa:14}
{R. Andreani, J. M. Martinez, L. T. Santos and B.F. Svaiter, \emph{On the behaviour of constrained optimization methods when Lagrange multipliers do not exist}, Optimization Methods and Software, 29(3): 646--657, 2014.}



\bibitem{AnMaSv:10}
R. Andreani, J. M. Martinez and B.F. Svaiter, \emph{A new sequential optimality condition for constrained optimization and algorithmic consequences}, SIAM Journal on Optimization,  20(6): 3533--3554, 2010.

\bibitem{AuShTe:10}
A. Auslender, R. Shefi, and M. Teboulle, \emph{A moving balls approximation method for a class of smooth constrained minimization problems}, SIAM Journal on Optimization, 20(6): 3232--3259, 2010. 

\bibitem{Ban:74}
J.W. Bandler, \emph{Optimization of design tolerances using nonlinear programming}, Journal of Optimization Theory and Applications, 14(1): 99--114, 1974.

\bibitem{Ber:82}
D. Bertsekas, \emph{Constrained Optimization and Lagrange Multiplier Methods}, Academic Press, 1982.

\bibitem{Ber:99}
D. Bertsekas, \emph{Nonlinear Programming}, 2nd ed., Athena Scientific, Belmont, MA, 1999.

\bibitem{BiGaMa:16}
E. G. Birgin, J. Gardenghi, J. Martìnez, S.  Santos and P. Toint, \emph{Evaluation complexity for nonlinear constrained optimization using unscaled KKT conditions and high-order models}, SIAM Journal on Optimization, 26(2): 951--967, 2016.

\bibitem{BiGaMa:17}
E.G. Birgin, J.L. Gardenghi, J.M. Martinez, S.A. Santos, and P. L. Toint, \emph{Worst-case evaluation complexity for unconstrained nonlinear optimization using high-order regularized models}, Mathematical Programming, 163: 359--368, 2017.

\bibitem{BoTo:95}
 P.T. Boggs, and J.W. Tolle,  \emph{Sequential quadratic programming}, Act. Num., 4: 1--51, 1995.

\bibitem{BolDan:07}
J.  Bolte, A. Daniilidis, A. Lewis and M. Shiota, \emph{Clarke subgradients of stratifiable functions},  SIAM Journal on Optimization, 18(2): 556--572, 2007.

\bibitem{BolChe:20}
J. Bolte,  Z. Chen and E.  Pauwels, \emph{The multiproximal linearization method for convex composite problems}, Mathematical Programming, 182: 1--36, 2020.

\bibitem{BoPa:16}
J. Bolte and E. Pauwels, \emph{Majorization-minimization procedures and convergence of SQP methods for semi-algebraic and tame programs}, Math. Oper. Res., 41(2): 442--465, 2016.

\bibitem{BoSaTe:18}
J. Bolte, S. Sabach and M. Teboulle,  \emph{Nonconvex Lagrangian-based optimization: monitoring schemes and global convergence}, Math. Oper. Res., 43(4): 1210--1232, 2018.

\bibitem{CaGoTo:19}
C. Cartis, N.I.M. Gould and P.L. Toint, \emph{Universal regularized methods: varying the power, the smoothness and the accuracy}, SIAM Journal on Optimization,  29(1): 595--615, 2019.


\bibitem{CaGoTo:20}
C. Cartis, N.M. Gould and P.L. Toint, \emph{A concise second-order complexity analysis for unconstrained optimization using high-order regularized models}, Optimization Methods and Software, 35(2): 243--256, 2020.

\bibitem{CaGoTo:22}
{C. Cartis, N.M. Gould and P.L. Toint, \emph{Evaluation complexity of algorithms for nonconvex optimization: theory, computation and perspectives}, SIAM, 2022. }

\bibitem{ChTaPa:07}
M. Chiang, C. Tan, D. Palomar, D. O. Neill and D. Julian, \emph{Power control by geometric programming}, IEEE Trans. Wireless Commun., 6(7): 2640--2651, 2007.

\bibitem{DoiNes:20}
N. Doikov and  Yu. Nesterov, \emph{Inexact tensor methods with dynamic accuracies}, International Conference on Machine Learning, 2577--2586, 2020.

\bibitem{DoiNes:21}
N. Doikov and  Yu. Nesterov \emph{Optimization methods for fully composite problems}, SIAM Journal on Optimization, 32(3): 2402--2427, 2022.

\bibitem{GasDvu:18}
{A. Gasnikov, P.E. Dvurechensky, E.A. Gorbunov, E.A. Vorontsova,  D. Selikhanovych and Cesar Uribe,  \emph{Optimal tensor methods in smooth convex and uniformly convex optimization}, Conference on  Computational Learning Theory, 2018.} 

\bibitem{GraNes:20}
G.N. Grapiglia and Yu. Nesterov, \emph{Tensor methods for minimizing convex functions with Hölder continuous higher-order derivatives}, SIAM Journal on Optimization, 30(4): 2750--2779, 2020. 



\bibitem{Lu:22}
S. Lu, \emph{A single-loop gradient descent and perturbed ascent algorithm for nonconvex functional constrained optimization}, International Conference on Machine Learning, 14315--14357, 2022.



\bibitem{MesBau:21}
F. Messerer, K. Baumgärtner and  M. Diehl,  \emph{Survey of sequential convex programming and generalized Gauss-Newton methods}, ESAIM: Proceedings and Surveys, 2021.



\bibitem{Mar:17}
{J. M. Martinez, \emph{On high-order model regularization for constrained optimization}, SIAM Journal on Optimization, 27(4): 2447--2458, 2017.}



\bibitem{Hes:69}
M. R. Hestenes, \emph{Multiplier and gradient methods}, Journal of Optimization Theory and Applications,  4: 303--320, 1969.



\bibitem{NabNec:23}
Y. Nabou and I. Necoara, \emph{Efficiency of higher-order algorithms for minimizing composite functions}, Computational Optimization and Applications, 87(2): 441--473, 2024.



\bibitem{NecDan:20}
I. Necoara and  D. Lupu, \emph{General higher-order majorization-minimization algorithms for (non) convex optimization}, arXiv preprint: 2010.13893, 2020.


\bibitem{NesNem:94}
Yu. Nesterov and A. Nemirovskii, \emph{Interior-Point Polynomial Algorithms in Convex Programming}, SIAM, Philadelphia, 1994.



\bibitem{Nes:20}
Yu.  Nesterov, \emph{Implementable tensor methods in unconstrained convex optimization}, Mathematical Programming, 186:  157--183,  2021.


\bibitem{NesPol:06}
Yu. Nesterov and B.T. Polyak, \emph{Cubic regularization of Newton method and its global performance}, Mathematical Programming, 108: 177--205, 2006.


\bibitem{Nes:07}
Y. Nesterov, \emph{Modified Gauss--Newton scheme with worst case guarantees for global performance}, Optimisation Methods and Software, 22(3): 469--483, 2007.


\bibitem{PalChi:07}
D.P. Palomar and M. Chiang, \emph{Alternative decompositions for distributed maximization of network utility: framework and applications}, IEEE Trans. Aut. Control, 52(12), 2007.



\bibitem{Roc:70}
R.T. Rockafellar and R.J-B. Wets, \emph{Variational analysis}, Springer, 317, 2009.




\bibitem{RigTon:11}
P. Rigollet and X. Tong, \emph{Neyman-pearson classification, convexity and stochastic constraints}, Journal of Machine Learning Research, 12: 2831--2855, 2011.

\bibitem{SaDa:95}
A. Jr. Santos and G.R.  Da Costa, \emph{Optimal-power-flow solution by Newton's method applied to an augmented Lagrangian function}, IEEE Proceedings - Generation, Transmission and Distribution, 142(1): 33--36, 1995.

\bibitem{SheZha:04}
 P. Shen and K. Zhang, \emph{Global optimization of signomial geometric programming using linear relaxation}, Applied Mathematics and Computation, 150: 99--114, 2004. 

\bibitem{WacBie:06}
A. Wächter and L.T. Biegler, \emph{On the implementation of a primal-dual interior point filter line search algorithm for large-scale nonlinear programming},  Math. Prog., 106:\,25--57, 2006.

\bibitem{YuPoLu:21}
P. Yu, T.K. Pong and Z. Lu, \emph{Convergence Rate Analysis of a Sequential Convex Programming Method with Line Search for a Class of Constrained Difference-of-Convex Optimization Problems}, SIAM Journal on Optimization, 31(3): 2024--2054, 2021.


\bibitem{ZhXuGu:18}
D. Zhou, P. Xu and Q. Gu, \emph{Stochastic variance-reduced cubic regularized Newton methods}, International Conference on Machine Learning, 5990--5999, 2018.


\bibitem{Zou:60}
G. Zoutendijk, \emph{Method of Feasible Directions}, Elsevier, Amsterdam, 1960.

\end{thebibliography}
\end{document}